\newtheorem{Theorem}{Theorem}[section]
\newtheorem{Lemma}[Theorem]{Lemma}
\newtheorem{Proposition}[Theorem]{Proposition}
\newtheorem{Remark}[Theorem]{Remark}
\newtheorem{Definition}[Theorem]{Definition}
\begin{document}
\begin{center}

{\large\bf Elliptic Curves in Moduli Space of Stable Bundles of Rank 3\ }

\vspace{0.2cm}
Liu Min \\
School of Mathematical Sciences, Fudan University, Shanghai 200433, P.R.China\\
E-mail: liumin@amss.ac.cn\\
\end{center}

\begin{abstract}
Let $M$ be the moduli space of rank 3 stable bundles with fixed determinant of degree 1 on a smooth projective curve of genus $g\geq 2$. When $C$ is generic, we show that any essential elliptic curve on $M$ has degree (respect to anti-canonical divisor $-K_{M}$) at least 6, and we give a complete classification for elliptic curves of degree 6, which is not in conformity with Sun's Conjecture. Moreover, if $g>12$, we show that any elliptic curve passing through the generic point of $M$ has degree at least 18.

\end{abstract}

\section{ Introduction }
Let $C$ be a smooth projective curve of genus $g\geq 2$ and $\mathcal{L}$ be a line bundle on $C$ of degree $d$. Let $M:=SU_{C}(r,\mathcal{L})$ be the moduli space of stable vector bundles of rank $r$ and with the fixed determinant $\mathcal{L}$, which is a smooth quasi-projective Fano-variety with $Pic(M)=\mathbb{Z}\cdot\Theta$. And $-K_{M}=2(r,d)\Theta$, where $\Theta$ is an ample divisor(\cite{Ramanan} \cite{Drezet}). Let $B$ be a smooth projective curve of genus $b$. The degree of a curve $\phi: B\rightarrow M$ is defined to be $\text{deg}\phi^{*}(-K_{M})$. It seems quite natural to ask what is the lower bound of degrees and to classify the curves of lower degree.

When $b=0$, i.e., $B=\mathbb{P}^{1}$, it has been proved that any rational curve $\phi: \mathbb{P}^{1}\rightarrow M$ passing through the generic point has degree at least $2r$ provided that $(r, d)=1$. Moreover, it has degree $2r$ if and only if it is a Hecke curve (\cite{Sun05}, Theorem 1). Ramanan \cite{Ramanan} found a family of lines on $M$, i.e., rational curves $\phi: \mathbb{P}^{1}\rightarrow M$ such that $\text{deg}\phi^{*}(-K_{M})=2(r,d)$. And all the lines are determined in \cite{Sun05} and \cite{MokSun09}. In \cite{Liu}, we have studied the small rational curves (i.e., the rational curves have degrees smaller than $2r$ ) on $M$ and estimate the codimension of the locus of the small rational curves when $d=1$; in particular, we determinant all small rational curves when $r=3$ (\cite{Liu}). Thus it is natural to ask what are the situation when $b>0$.

When $b=1$, it may happen that the normalization of $\phi(B)$ is $\mathbb{P}^{1}$. To avoid this case, we only consider the case that $\phi: B\rightarrow M$ is an essential elliptic curve ( cf. \cite{Sun}).
The paper \cite{Sun} is a start to study the case of $b=1$. In \cite{Sun}, Sun constructed essential elliptic curves of degree $6(r,d)$ on $M$, which are called elliptic curves of split type, and essential elliptic curves of degree $6r$ that passing through the generic point of $M$, which are called elliptic curves of Hecke type. Do they exhaust all minimal essential elliptic curves on $M$ ( resp. minimal essential elliptic curves passing through generic point of $M$)? For this, Sun studied the case when $r=2$ and $d=1$, showed that any essential elliptic curve has degree at least 6, it has degree 6 if and only if it is an elliptic curve of split type with minimal degree. Moreover, if $g>4$, Sun showed that any elliptic curve passing through the generic point has degree at least 12. And then Sun conjectured the result holds for any rank $r$ and degree $d$ and gave the following conjecture ( cf. Conjecture 4.8 of \cite{Sun}):

\textbf{Sun's Conjecture:} \emph{Let $\phi: B\rightarrow M=SU_{C}(r,\mathcal{L})$ is an essential elliptic curve defined by a vector bundle $E$ on $C\times B$. Then, when $C$ is a generic curve, we have
$$\text{deg}\phi^{*}(-K_{M})=\Delta(E)\geq 6(r,d)$$
and $\text{deg}\phi^{*}(-K_{M})=6(r,d)$ if and only if it is an elliptic curve of split type with minimal degree. If $\phi: B\rightarrow M$ passes through the generic point and $g>4$, then $\text{deg}\phi^{*}(-K_{M})\geq 6r$.}

 In this paper, we consider the case that $r=3$ and $d=1$, then $M$ is a smooth projective Fano-variety of dimension $8g-8$. When $C$ is generic, we show that any essential elliptic curve $\phi: B\rightarrow M$ has degree at least 6 ( see Theorem \ref{th:4.7}). When $g>12$ and $C$ is generic, we show that any essential elliptic curve $\phi: B\rightarrow M$ passing through the generic point of $M$ have degree at least 18 ( see Theorem \ref{th:4.8}). But an essential elliptic curve of degree 6 may not be an elliptic curve of split type (see Proposition \ref{prop:3.1} and Theorem \ref{th:4.7}).

We give a brief description of the article. In section 2, we recall a degree formula of curves for general case which has proven in \cite{Sun}. In section 3, we recall the the constructions of \textbf{elliptic curves of Hecke type} and \textbf{elliptic curves of split type}. And, we also give a class of degree 6 essential elliptic curves which are not elliptic curves of split type when $r=3$ and $d=1$. In section 4, we prove the main theorems (Theorem \ref{th:4.7} and Theorem \ref{th:4.8}), which partly prove Sun's conjecture for the case $r=3, d=1$. On the other hand, Theorem \ref{th:4.7} also implies that the essential elliptic curves of degree 6 may not be elliptic curves of split type.

\section{ The degree formula of curves in moduli spaces }

Let's recall the degree formula of curves in moduli spaces.

\begin{Lemma}(\cite{Sun})
For any smooth projective curve $B$ of genus $b$, if $\phi: B\rightarrow M$ is defined by a vector bundle $E$ of rank $r$ on $C\times B$. Then
$$\text{deg}\phi^{*}(-K_{M})=c_{2}(\mathcal{E}nd^{0}(E))=2rc_{2}(E)-(r-1)c_{1}(E)^{2}:=\Delta(E).$$
\end{Lemma}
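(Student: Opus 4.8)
The plan is to realize $\phi^{*}(-K_{M})$ through deformation theory and then evaluate it by Grothendieck–Riemann–Roch. Write $p\colon C\times B\to C$ and $q\colon C\times B\to B$ for the two projections. Since each restriction $E|_{C\times\{b\}}$ is stable, hence simple, the trace-free endomorphisms vanish fibrewise, so $q_{*}\mathcal{E}nd^{0}(E)=0$; the Kodaira–Spencer theory of the family $E$ then identifies the pullback of the tangent bundle as $\phi^{*}T_{M}\cong R^{1}q_{*}\mathcal{E}nd^{0}(E)$. Consequently $\phi^{*}(-K_{M})=c_{1}(\phi^{*}T_{M})=c_{1}\big(R^{1}q_{*}\mathcal{E}nd^{0}(E)\big)$, and I would reduce this to the class of the full derived pushforward: because $R^{0}q_{*}\mathcal{E}nd^{0}(E)=0$, in $K$-theory $Rq_{*}\mathcal{E}nd^{0}(E)=-[R^{1}q_{*}\mathcal{E}nd^{0}(E)]$, so that $c_{1}\big(R^{1}q_{*}\mathcal{E}nd^{0}(E)\big)=-\,\mathrm{ch}_{1}\big(Rq_{*}\mathcal{E}nd^{0}(E)\big)$.

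Next I would apply Grothendieck–Riemann–Roch to $q$, whose relative tangent bundle is $p^{*}T_{C}$. This gives
$$\mathrm{ch}\big(Rq_{*}\mathcal{E}nd^{0}(E)\big)=q_{*}\big(\mathrm{ch}(\mathcal{E}nd^{0}(E))\cdot p^{*}\mathrm{Td}(T_{C})\big).$$
Since $C$ is a curve, $\mathrm{Td}(T_{C})=1+\tfrac{1}{2}c_{1}(T_{C})$ carries no term of degree $\geq 2$, and since $\mathcal{E}nd^{0}(E)$ is trace-free its first Chern class vanishes, $\mathrm{ch}_{1}(\mathcal{E}nd^{0}(E))=0$. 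Extracting the degree-two part of the integrand (the only part that $q_{*}$ sends to $\mathrm{ch}_{1}$ on $B$), both the mixed term $\mathrm{ch}_{1}(\mathcal{E}nd^{0}(E))\cdot p^{*}(\tfrac{1}{2}c_{1}(T_{C}))$ and the degree-two Todd correction drop out, leaving $\mathrm{ch}_{1}\big(Rq_{*}\mathcal{E}nd^{0}(E)\big)=q_{*}\,\mathrm{ch}_{2}(\mathcal{E}nd^{0}(E))$. As $\mathcal{E}nd^{0}(E)$ has vanishing first Chern class, $\mathrm{ch}_{2}=-c_{2}$, so after integrating over $B$ and using the projection formula the sign introduced above cancels, yielding $\text{deg}\,\phi^{*}(-K_{M})=c_{2}(\mathcal{E}nd^{0}(E))$, evaluated as a number on the surface $C\times B$.

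It remains to rewrite $c_{2}(\mathcal{E}nd^{0}(E))$ in terms of $E$. Splitting off the trace gives $\mathcal{E}nd(E)=\mathcal{E}nd^{0}(E)\oplus\mathcal{O}$, so $\mathrm{ch}(\mathcal{E}nd^{0}(E))=\mathrm{ch}(E)\,\mathrm{ch}(E^{\vee})-1$; expanding the product and comparing degree-two terms gives $\mathrm{ch}_{2}(\mathcal{E}nd^{0}(E))=(r-1)c_{1}(E)^{2}-2rc_{2}(E)$, hence $c_{2}(\mathcal{E}nd^{0}(E))=2rc_{2}(E)-(r-1)c_{1}(E)^{2}=\Delta(E)$. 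I expect the one genuinely non-formal point to be the Kodaira–Spencer identification $\phi^{*}T_{M}\cong R^{1}q_{*}\mathcal{E}nd^{0}(E)$ together with the vanishing $R^{0}q_{*}\mathcal{E}nd^{0}(E)=0$: this is where stability (simpleness of the fibres) and cohomology-and-base-change for the family $E$ enter, and where one must check that $R^{1}q_{*}$ is indeed the sheaf computing the differential of the classifying morphism. Everything after that is the standard Riemann–Roch and Chern-class bookkeeping sketched above.
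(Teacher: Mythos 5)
Your proposal is correct and takes essentially the same route as the proof of record: the paper states this lemma without proof, citing \cite{Sun}, and the argument there is exactly yours --- identify $\phi^{*}T_{M}\cong R^{1}q_{*}\mathcal{E}nd^{0}(E)$ via Kodaira--Spencer (with $q_{*}\mathcal{E}nd^{0}(E)=0$ from simplicity of the stable fibres, and $\mathcal{E}nd^{0}$ absorbing the line-bundle ambiguity of the universal family, so non-fineness of $M$ is harmless), then apply Grothendieck--Riemann--Roch to $q$ and the identity $c_{2}(\mathcal{E}nd^{0}(E))=2rc_{2}(E)-(r-1)c_{1}(E)^{2}$. All your sign and degree bookkeeping checks out, so nothing further is needed.
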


Let $f:X:=C\times B\rightarrow C$ be the projection. Then for any vector bundle $E$ on $X$, there is a relative Harder-Narasimhan filtration (cf. Theorem 2.3.2, page 45 in \cite{HuybrechtsLehn})
$$0=E_{0}\subset E_{1}\subset\cdots\subset E_{n}=E$$
such that $F_{i}=E_{i}/E_{i-1} (i=1,\cdots, n)$ are flat over $C$ and its restriction to general fiber $X_{t}=f^{-1}(t)$ is the Harder-Narasimhan filtration of $E|_{X_{t}}$. Thus $F_{i}$ are semi-stable of slop $\mu_{i}$ at generic fiber of $f: X\rightarrow C$ with $\mu_{1}>\mu_{2}>\cdots >\mu_{n}$. Then we have

\begin{Theorem}(\cite{Sun}) For any vector bundle of rank $r$ on $X$, let
$$0=E_{0}\subset E_{1}\subset\cdots\subset E_{n}=E$$
be the relative Harder-Narasimhan filtration over $C$ with $F_{i}=E_{i}/E_{i-1}$ and $\mu_{i}=\mu(F_{i}|_{f^{-1}(x)})$ for generic $x\in C$. Let $\mu(E)$ and $\mu(E_{i})$ denote the slop of $E|_{\pi^{-1}(b)}$ and $E_{i}|_{\pi^{-1}(b)}$ for generic $b\in B$. Then, if
$$\text{Pic}(C\times B)=\text{Pic}(C)\times \text{Pic}(B),$$
we have the following formula
\begin{equation}
\Delta(E)=2r
  \left( \begin{array}{ll}
              \sum_{i=1}^{n}(c_{2}(F_{i})-\frac{\text{rk}(F_{i})-1}{2\text{rk}(F_{i})}c_{1}(F_{i})^{2})\\
              +\sum_{i=1}^{n-1}(\mu(E)-\mu(E_{i}))\text{rk}(E_{i})(\mu_{i}-\mu_{i+1})\\         \end{array}
  \right).
\label{eq:2.1}
\end{equation}
\label{th:2.2}
\end{Theorem}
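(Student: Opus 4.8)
The plan is to prove the identity as a formal computation of Chern classes on the surface $X=C\times B$: first expand $\Delta(E)$ along the filtration via the Whitney product formula, then use the hypothesis $\mathrm{Pic}(C\times B)=\mathrm{Pic}(C)\times\mathrm{Pic}(B)$ to evaluate all intersection numbers, and finally reorganize the resulting quadratic form by summation by parts. Since the total Chern class is multiplicative in short exact sequences of coherent sheaves, the filtration gives $c_1(E)=\sum_i c_1(F_i)$ and $c_2(E)=\sum_i c_2(F_i)+\sum_{i<j}c_1(F_i)c_1(F_j)$. Writing $r_i=\mathrm{rk}(F_i)$ and substituting into $\Delta(E)=2rc_2(E)-(r-1)c_1(E)^2$, a direct rearrangement yields
\begin{align*}
\Delta(E)=2r\sum_{i=1}^{n}\Big(c_2(F_i)-\tfrac{r_i-1}{2r_i}c_1(F_i)^2\Big)+\Big(\sum_{i}\tfrac{r_i-r}{r_i}c_1(F_i)^2+2\sum_{i<j}c_1(F_i)c_1(F_j)\Big),
\end{align*}
in which the first sum is already exactly the first sum of (\ref{eq:2.1}), because $c_2(F_i)-\frac{r_i-1}{2r_i}c_1(F_i)^2=\frac{1}{2r_i}\Delta(F_i)$. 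It then remains to identify the purely $c_1$-quadratic bracket with $2r\sum_{i=1}^{n-1}(\mu(E)-\mu(E_i))\,\mathrm{rk}(E_i)(\mu_i-\mu_{i+1})$.

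Next I would invoke the Picard hypothesis, which forces the numerical class group of $X$ to be spanned by the two fibre classes $\mathfrak f=[\{x\}\times B]$ and $\mathfrak b=[C\times\{b\}]$ with $\mathfrak f^2=\mathfrak b^2=0$ and $\mathfrak f\cdot\mathfrak b=1$. Writing $c_1(F_i)=e_i\,\mathfrak f+d_i\,\mathfrak b$, the intersection numbers are immediate: $d_i=c_1(F_i)\cdot\mathfrak f=r_i\mu_i$ is the fibrewise degree over $C$, while $e_i=c_1(F_i)\cdot\mathfrak b$ is the degree over $B$, so that $c_1(F_i)^2=2e_id_i$ and $c_1(F_i)c_1(F_j)=e_id_j+e_jd_i$. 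Consequently the quadratic bracket becomes an explicit expression in the integers $e_i,d_i,r_i$, and the slopes of the subbundles over the generic $B$-fibre become $\mu(E_i)=\mathcal E_i/R_i$ and $\mu(E)=e/r$, where $R_i=\sum_{j\le i}r_j$, $\mathcal E_i=\sum_{j\le i}e_j$, $e=\sum_j e_j$.

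The decisive step is a summation by parts. Setting $c_i:=2r(\mu(E)-\mu(E_i))\,\mathrm{rk}(E_i)=2(eR_i-r\mathcal E_i)$, one has automatically $c_0=0$ (since $R_0=\mathcal E_0=0$) and, crucially, $c_n=2(er-re)=0$; these two vanishing boundary terms are what make the telescoping close. Applying Abel summation to $\sum_{i=1}^{n-1}c_i(\mu_i-\mu_{i+1})$ with summation variable $\mu_i=d_i/r_i$ and using $c_i-c_{i-1}=2(er_i-re_i)$ collapses the target to $\sum_{i=1}^{n}2(er_i-re_i)\tfrac{d_i}{r_i}=2e\sum_i d_i-2r\sum_i\tfrac{e_id_i}{r_i}$, which is exactly the quadratic bracket of the first display (after expanding $2e\sum_i d_i=2\sum_i e_id_i+2\sum_{i<j}(e_id_j+e_jd_i)$ and $\sum_i\tfrac{r_i-r}{r_i}e_id_i=\sum_i e_id_i-r\sum_i\tfrac{e_id_i}{r_i}$). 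I would first record the case $n=2$, where both sides reduce to $\frac{2}{r_1r_2}(r_1e_2-e_1r_2)(r_2d_1-r_1d_2)$, as a sanity check on signs before running the general Abel summation.

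I expect the only genuine obstacle to be the bookkeeping in this last step: choosing the right summation variable and recognizing that the deviation factor $\mu(E)-\mu(E_i)$ is precisely the telescoped coefficient with vanishing endpoints. Throughout, the relative Harder–Narasimhan hypothesis is used only to guarantee that the $F_i$ are flat over $C$ with well-defined generic fibrewise slopes $\mu_1>\cdots>\mu_n$, so that the classes $c_1(F_i)$ and the numbers $\mu_i,\mu(E_i)$ are meaningful; the equality itself is the formal Chern-class identity above and holds for the filtration independently of the ordering of slopes.
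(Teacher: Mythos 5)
Your computation is correct: I checked the Whitney expansion, the reduction of the quadratic bracket to $2ed-2r\sum_i e_id_i/r_i$ in the fibre-class coordinates $c_1(F_i)=e_i\mathfrak f+d_i\mathfrak b$, the Abel summation with $c_i-c_{i-1}=2(er_i-re_i)$ and the boundary vanishings $c_0=c_n=0$, and the $n=2$ specialization; all the identities close as you claim. Note, however, that there is no in-paper proof to compare against: the paper states Theorem \ref{th:2.2} with the citation \cite{Sun} and imports it wholesale. Sun's original argument is of the same genre as yours --- a direct Chern-class computation made possible by $\mathrm{Pic}(C\times B)=\mathrm{Pic}(C)\times\mathrm{Pic}(B)$ and $\mathfrak f^2=\mathfrak b^2=0$ --- but it is organized as an induction on the length $n$ of the filtration through the two-term identity $\frac{\Delta(E)}{r}=\frac{\Delta(E')}{r'}+\frac{\Delta(E'')}{r''}-\frac{r'r''}{r}\bigl(\frac{c_1(E')}{r'}-\frac{c_1(E'')}{r''}\bigr)^2$ applied to $0\rightarrow E_{n-1}\rightarrow E\rightarrow F_n\rightarrow 0$, whereas you expand once over the whole filtration and let a single summation by parts do the telescoping that the induction performs step by step. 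Your closed-form route is arguably cleaner (the vanishing of $c_n$ makes visible exactly why the coefficient is $\mu(E)-\mu(E_i)$ and not some other deviation), while the inductive route localizes the computation to one short exact sequence at a time, which is the form in which the paper repeatedly reuses the formula (e.g.\ in the elementary-transformation estimates of Lemma \ref{lm2.6} and Propositions \ref{prop:4.2}--\ref{prop:4.6}). Two points of hygiene you implicitly rely on and could state: the Whitney formula is being applied to the $F_i$, which need only be coherent (flat over $C$), so one should invoke Chern classes via finite locally free resolutions on the smooth surface $X$; and the identifications $d_i=r_i\mu_i$ and $\mu(E_i)=\mathcal E_i/R_i$ use flatness over $C$ and genericity of the fibre, since on special fibres the restrictions may acquire torsion even though the intersection numbers $c_1(F_i)\cdot\mathfrak f$, $c_1(E_i)\cdot\mathfrak b$ are constant. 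Your closing remark is also correct and worth keeping: the identity is formal and independent of the slope ordering; the Harder--Narasimhan hypothesis only makes the $\mu_i$ well defined and, in the applications, makes the second sum a sum of nonnegative terms.
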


\begin{Remark}(i) The assumption $\text{Pic}(C\times B)=\text{Pic}(C)\times \text{Pic}(B)$ is always hold when $B=\mathbb{P}^{1}$;

(ii)The assumption also holds when $B$ is an elliptic curve and $C$ is generic.
\end{Remark}

\begin{Theorem}(\cite{Sun})
 For any torsion free sheaf $\mathcal{F}$ on $X=C\times B$, if its restriction to a fiber of $f: X\rightarrow C$ is semi-stable, then
$$\Delta(\mathcal{F})=2\text{rk}(\mathcal{F})c_{2}(\mathcal{F})-(\text{rk}(\mathcal{F})-1)c_{1}(\mathcal{F})^{2}\geq 0.$$
If the determinants $\{\text{det}(\mathcal{F}^{**})_{x}\}_{x\in C}$ are isomorphic each other, then $\Delta(\mathcal{F})=0$ if and only if $\mathcal{F}$ is locally free and satisfies

$\bullet$ All the bundles $\{\mathcal{F}_{x}:=\mathcal{F}|_{\{x\}\times B}\}_{x\in C}$ are semi-stable and s-equivalent each other.

$\bullet$ All the bundles $\{\mathcal{F}_{y}:=\mathcal{F}|_{C\times \{y\}}\}_{y\in B}$ are isomorphic each other.
 \label{th:2.4}
\end{Theorem}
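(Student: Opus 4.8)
The plan is to separate the inequality from the equality analysis, and to run the inequality through Bogomolov's instability theorem combined with the product structure of $X=C\times B$. Throughout I write $f\colon X\to C$ and $\pi\colon X\to B$ for the two projections, so that the fibers of $f$ are the curves $\{x\}\times B$. First I record two reductions. Since $\Delta(\mathcal{F})=c_{2}(\mathcal{E}nd^{0}(\mathcal{F}))$ and $\Delta(\mathcal{F}\otimes f^{*}L)=\Delta(\mathcal{F})$ for any line bundle $L$ on $C$ (a direct Chern-class computation shows the twist cancels), I may normalize $c_{1}$ freely. More importantly, the sequence $0\to\mathcal{F}\to\mathcal{F}^{**}\to Q\to 0$ with $Q$ of finite length $\ell\ge 0$ gives, after comparing Chern classes, $\Delta(\mathcal{F})=\Delta(\mathcal{F}^{**})+2\,\mathrm{rk}(\mathcal{F})\,\ell$. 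As $\mathcal{F}^{**}$ is locally free on the smooth surface $X$ and its restriction to a generic fiber agrees with that of $\mathcal{F}$, hence is semistable, it suffices to prove $\Delta\ge 0$ for locally free $\mathcal{F}$; and once the inequality is known, $\Delta(\mathcal{F})=0$ forces $\ell=0$, i.e. $\mathcal{F}$ is locally free, which is the first assertion of the equality statement.

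For the inequality I argue by contradiction. If $\Delta(\mathcal{F})<0$ then by Bogomolov's theorem $\mathcal{F}$ is Bogomolov-unstable, so there is a saturated subsheaf $\mathcal{G}\subset\mathcal{F}$ for which $\xi:=\mathrm{rk}(\mathcal{F})\,c_{1}(\mathcal{G})-\mathrm{rk}(\mathcal{G})\,c_{1}(\mathcal{F})$ satisfies $\xi^{2}>0$ and $\xi\cdot H>0$ for every ample $H$. Intersecting with the fiber class $F=[\{x\}\times B]$, which is nef, gives $\xi\cdot F\ge 0$; on the other hand $\xi\cdot F=\mathrm{rk}(\mathcal{F})\,\mathrm{rk}(\mathcal{G})\,\big(\mu(\mathcal{G}_{x})-\mu(\mathcal{F}_{x})\big)$ on a generic fiber, and semistability of $\mathcal{F}_{x}$ forces $\mu(\mathcal{G}_{x})\le\mu(\mathcal{F}_{x})$, i.e. $\xi\cdot F\le 0$. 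Hence $\xi\cdot F=0$. Using $\mathrm{Pic}(C\times B)=\mathrm{Pic}(C)\times\mathrm{Pic}(B)$ (valid for $C$ generic), the numerical class of $\xi$ is $aF+eS$ with $S=[C\times\{y\}]$ and $\xi\cdot F=e$, so $e=0$ and $\xi^{2}=2ae=0$, contradicting $\xi^{2}>0$. Therefore $\Delta(\mathcal{F})\ge 0$.

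For the equality case I would read $\mathcal{F}$ as a family of bundles on $B$ parametrized by $C$, i.e. a classifying morphism $\psi\colon C\to \overline{M}_{B}$ to the projective moduli of semistable bundles on $B$, $x\mapsto[\mathcal{F}_{x}]$. The determinant hypothesis that the $\det(\mathcal{F}^{**})_{x}$ are mutually isomorphic is exactly what makes $\psi$ land in the fixed-determinant moduli $SU_{B}(r,\mathcal{L}')$, on which $\Theta$ (equivalently $-K$) is ample. The degree formula $\Delta(\mathcal{F})=c_{2}(\mathcal{E}nd^{0}(\mathcal{F}))$ recalled above, applied with the roles of $C$ and $B$ exchanged, identifies $\Delta(\mathcal{F})$ with $\deg\psi^{*}(-K)$ up to the positive factor $2(r,d)$; thus $\Delta(\mathcal{F})=0$ forces $\psi$ to be constant, which says precisely that the $\mathcal{F}_{x}$ are semistable and mutually $s$-equivalent. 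When the generic $\mathcal{F}_{x}$ is stable (hence simple) the constancy of $\psi$ lets me manufacture an isomorphism $\pi^{*}\mathcal{F}_{0}\otimes f^{*}L\xrightarrow{\ \sim\ }\mathcal{F}$ from the line bundle $f_{*}\mathcal{H}om(\pi^{*}\mathcal{F}_{0},\mathcal{F})$ on $C$, and restricting to $C\times\{y\}$ yields $\mathcal{F}_{y}\cong L^{\oplus r}$, so the $\mathcal{F}_{y}$ are mutually isomorphic. The converse is immediate: $s$-equivalent fibers make $\psi$ constant, hence $\Delta=0$.

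The main obstacle I expect is the equality analysis rather than the inequality. Two points require care: making the classifying-map argument rigorous when $\mathcal{F}_{x}$ is only strictly semistable (so $\psi$ records $s$-equivalence classes and the naive ``family with constant fibers is a twist'' step must be run on the associated graded instead), and isolating the exact role of the determinant hypothesis. That the hypothesis is genuinely needed is visible already for $B$ elliptic: the universal family there has all fibers stable yet with varying determinant, giving a non-constant $\psi$ with $\Delta=0$, so the equality characterization must fail once the determinants are allowed to move.
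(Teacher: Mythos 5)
This theorem is quoted from \cite{Sun}; the present paper contains no proof of it, so your proposal stands or falls on its own. The inequality half is essentially correct. The double-dual reduction $\Delta(\mathcal{F})=\Delta(\mathcal{F}^{**})+2\,\text{rk}(\mathcal{F})\,\ell$ is right and correctly yields local freeness in the equality case, and Bogomolov instability plus intersection with the nef fiber class does force $\xi\cdot F=0$. One repair: you invoke $\text{Pic}(C\times B)=\text{Pic}(C)\times\text{Pic}(B)$ ``for $C$ generic'', but the theorem carries no such hypothesis (genericity of $C$ enters the paper only in Section 4). The invocation is also unnecessary: once $\xi^{2}>0$ and $\xi\cdot F=0$, the Hodge index theorem gives the contradiction directly, since $F$ would lie in the negative-definite orthogonal complement of $\xi$ while $F^{2}=0$ and $F\cdot[C\times\{y\}]=1$, so $F$ is not numerically trivial. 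With that change the inequality holds for arbitrary $C$, as stated.

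The equality analysis, by contrast, has two genuine gaps. (i) Your classifying map $\psi\colon C\to \overline{M}_{B}$, $x\mapsto[\mathcal{F}_{x}]$, presupposes that \emph{every} $\mathcal{F}_{x}$ is semistable, whereas the hypothesis gives this only on the generic fiber; semistability of all fibers is itself part of the first bullet and must be proved before $\psi$ exists on all of $C$. This is exactly where Lemma \ref{lm2.6} is used: if some $\mathcal{F}_{x_{0}}$ were unstable, the elementary transformation $\mathcal{F}'$ along a destabilizing locally free quotient $W$ (so $\mu(W)<\mu(\mathcal{F}_{x_{0}})$) satisfies $\Delta(\mathcal{F})=\Delta(\mathcal{F}')+2r(\mu(\mathcal{F}_{x_{0}})-\mu(W))\text{rk}\,W>\Delta(\mathcal{F}')\geq 0$, where $\Delta(\mathcal{F}')\geq 0$ follows from your own inequality because $\mathcal{F}'$ is unchanged off $X_{x_{0}}$ and hence still generically semistable; you never take this step. (ii) Your derivation of the second bullet works only when the generic $\mathcal{F}_{x}$ is stable, so that $f_{*}\mathcal{H}om(\pi^{*}\mathcal{F}_{0},\mathcal{F})$ is a line bundle and evaluation is an isomorphism. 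In the strictly semistable case --- which you flag as the ``main obstacle'' but do not treat --- constancy of the $s$-equivalence class does not make $\mathcal{F}$ a twist of a constant family (already $f^{*}A\otimes\pi^{*}L_{1}\oplus f^{*}A'\otimes\pi^{*}L_{2}$ with $A\neq A'$ shows the conclusion takes a different shape), and ``run it on the associated graded'' is a plan, not an argument: one must spread the Jordan--H\"older filtration of the generic fiber out over $C\times B$ and control the resulting extensions to conclude that the $\mathcal{F}_{y}$ are mutually isomorphic, and this is the real content of the equality case. Relatedly, your identification of $\Delta(\mathcal{F})$ with $\deg\psi^{*}(-K)$ up to a positive factor leans on the degree formula recalled in Section 2, which is stated for families of \emph{stable} bundles mapping to the moduli space; with strictly semistable members one needs the determinant-line-bundle (Grothendieck--Riemann--Roch) computation of $\deg\psi^{*}\Theta$ on $SU_{B}(r,\mathcal{L}')$, where the constant-determinant hypothesis is what kills the extra $c_{1}$ terms. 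Your closing example showing that hypothesis is genuinely needed is a good sanity check, but it does not repair these steps.
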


We will need the following lemma in the later computation, whose proof are straightforward computations.
Recall that $X_{t}=f^{-1}(t)$ denotes the fiber of $f: X\rightarrow C$ and for any vector bundle $\mathcal{F}$ on $X$, $\mathcal{F}_{t}$ denote the restrictions of $\mathcal{F}$ to $X_{t}$.

\begin{Lemma}(\cite{Sun})Let $\mathcal{F}_{t}\rightarrow W\rightarrow 0$ be a locally free quotient and
$$0\rightarrow \mathcal{F}'\rightarrow \mathcal{F}\rightarrow_{X_{t}}W\rightarrow 0$$
be the elementary transformation of $\mathcal{F}$ along $W$ at $X_{t}\subset X$. Then
$$\Delta(\mathcal{F})=\Delta(\mathcal{F}')+2r(\mu(\mathcal{F}_{t})-\mu(W))\text{rk}W.$$
\label{lm2.6}
\end{Lemma}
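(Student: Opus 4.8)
The plan is to read off the Chern character of $\mathcal{F}'$ from the defining exact sequence and substitute into the definition of $\Delta$. Write $i\colon X_{t}\hookrightarrow X$ for the inclusion of the fiber, so that the elementary transformation is the short exact sequence $0\to\mathcal{F}'\to\mathcal{F}\to i_{*}W\to 0$ of sheaves on $X$. Since the Chern character is additive on short exact sequences, $\text{ch}(\mathcal{F}')=\text{ch}(\mathcal{F})-\text{ch}(i_{*}W)$, so the whole computation reduces to determining $\text{ch}(i_{*}W)$ and keeping track of the degree-$2$ and degree-$4$ pieces. It is convenient to record first the identity $\Delta(E)=c_{1}(E)^{2}-2\,\text{rk}(E)\,\text{ch}_{2}(E)$, which follows at once from $\text{ch}_{2}=\tfrac{1}{2}(c_{1}^{2}-2c_{2})$; this lets me work directly with $\text{ch}$ rather than juggling $c_{1}$ and $c_{2}$ separately.

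Next I would compute $\text{ch}(i_{*}W)$. The key geometric input is that $X=C\times B$ is a smooth projective surface, so only the components in $H^{0},H^{2},H^{4}$ survive, and that the fiber $X_{t}$ has class $F:=f^{*}[\mathrm{pt}_{C}]$ with $F^{2}=0$ and trivial normal bundle $N_{X_{t}/X}\cong\mathcal{O}_{X_{t}}$. Because the Todd correction $\text{td}(N_{X_{t}/X})^{-1}$ is therefore $1$, Grothendieck--Riemann--Roch for the closed immersion $i$ collapses to $\text{ch}(i_{*}W)=i_{*}\,\text{ch}(W)$. As $W$ is a bundle on the curve $X_{t}$ we have $\text{ch}(W)=\text{rk}(W)+c_{1}(W)$, and pushing forward gives $\text{ch}(i_{*}W)=\text{rk}(W)\,F+\deg(W)\,[\mathrm{pt}]$, where $[\mathrm{pt}]=F\cdot f_{B}^{*}[\mathrm{pt}_{B}]$ is the point class. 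As a sanity check, the case $W=\mathcal{O}_{X_{t}}$ recovers $\text{ch}(i_{*}\mathcal{O}_{X_{t}})=1-e^{-F}=F$.

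Comparing components then yields $c_{1}(\mathcal{F}')=c_{1}(\mathcal{F})-\text{rk}(W)\,F$ and $\text{ch}_{2}(\mathcal{F})-\text{ch}_{2}(\mathcal{F}')=\deg(W)\,[\mathrm{pt}]$, while the ranks agree. Plugging into $\Delta(E)=c_{1}(E)^{2}-2r\,\text{ch}_{2}(E)$ and using $F^{2}=0$ gives $c_{1}(\mathcal{F})^{2}-c_{1}(\mathcal{F}')^{2}=2\,\text{rk}(W)\,(c_{1}(\mathcal{F})\cdot F)$. Finally I would identify $c_{1}(\mathcal{F})\cdot F=\deg(\mathcal{F}_{t})=r\,\mu(\mathcal{F}_{t})$ and $\deg(W)=\text{rk}(W)\,\mu(W)$, so that $\Delta(\mathcal{F})-\Delta(\mathcal{F}')=2\,\text{rk}(W)\,\deg(\mathcal{F}_{t})-2r\,\deg(W)=2r\,(\mu(\mathcal{F}_{t})-\mu(W))\,\text{rk}(W)$, which is the asserted formula. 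The computation is entirely mechanical once the Chern character of $i_{*}W$ is in hand; the only place demanding care---and the step I would double-check---is the Grothendieck--Riemann--Roch bookkeeping for the pushforward, i.e.\ verifying that the normal bundle is trivial so that no extra Todd term contributes, and that $F^{2}=0$ kills the cross term in $c_{1}(\mathcal{F}')^{2}$.
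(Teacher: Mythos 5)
Your proposal is correct: the Grothendieck--Riemann--Roch computation of $\mathrm{ch}(i_{*}W)=\mathrm{rk}(W)\,F+\deg(W)\,[\mathrm{pt}]$ (using that $N_{X_{t}/X}$ is trivial and $F^{2}=0$), combined with additivity of the Chern character and the identity $\Delta(E)=c_{1}(E)^{2}-2\,\mathrm{rk}(E)\,\mathrm{ch}_{2}(E)$, yields exactly the asserted formula. The paper itself gives no proof, citing \cite{Sun} and describing the argument as a straightforward computation, and yours is precisely that computation carried out in full.
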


\section{Examples of Elliptic Curves on Moduli Spaces and Sun's Conjecture}

Recall that given two nonnegative integers $k, l$, a vector bundle $W$ of rank $r$ and degree $d$ on $C$ is $(k,l)$-stable, if, for each proper subbundle $W'$ of $W$, we have
$$\frac{\text{deg}(W')+k}{\text{rk}(W')}<\frac{\text{deg}(W)+k-l}{r}.$$

\begin{Remark}

(i)The usual stability is equivalent to (0,0)-stability.

(ii)If $W$ is $(k,l)$-stable, then $W^{*}$ is $(l,k)$-stable.

(iii)The $(k,l)$-stability is an open condition.
\end{Remark}

Let $M:=SU_{C}(r,\mathcal{L})$ be the moduli space of stable vector bundles of rank $r$ and with the fixed determinant $\mathcal{L}$ and $\text{deg}\mathcal{L}=d$. If $(k,l)$-stable points exist, then the set of $(k,l)$-stable points is open in $M$. And the $(k,l)$-stable points are the so called generic points. So it's natural to ask that does the $(k,l)$-stable point exist?

It's equivalent to estimate the dimension of the subvariety of $M:=SU_{C}(r,\mathcal{L})$ consisting of non-$(k,l)$-stable points. Clearly any such bundle $E$ contains a subbundle $F$ satisfying the inequality
$$\frac{\text{deg}F+k}{\text{rk}F}\geq \frac{\text{deg}E+k-l}{\text{rk}E}=\frac{d+k-l}{r}.$$
By using [proposition 2.6 of \cite{NarasimhanRamanan1}] as in [Lemma 6.7 of  \cite{NarasimhanRamanan1}], we may as well assume that $F$ and $E/F$ are stable and compute the dimension of such bundles $E$. The dimension of a component corresponding to a fixed rank $n$ and degree $\delta$ of $F$ such that $\frac{\delta+k}{n}\geq \frac{d+k-l}{r}$ is majored by $\text{dim}U(n,\delta)+\text{dim}U(r-n,d-\delta)+\text{dim}H^{1}(C,\text{Hom}(E/F, F))-1-g=(r^{2}-1)(g-1)-n(r-n)(g-1)+(nd-r\delta)$. If all the dimensions of the components is strictly smaller than the dimension of M, then $(k,l)$-stable point exists. Thus we hope that $-n(r-n)(g-1)+(nd-r\delta)<0$ for any $n$ and $\delta$ such that $\frac{\delta+k}{n}\geq \frac{d+k-l}{r}$, it's necessary to prove that
\begin{equation}
(r-n)k+nl<n(r-n)(g-1) \text{ for any } 1\leq n\leq r-1.
\label{eq:3.1}
\end{equation}

Thus we have following lemmas, which proof are easy and elementary (cf. \cite{NarasimhanRamanan}).
\begin{Lemma}
If $g\geq3$, $M$ contains $(0,1)$-stable and $(1,0)$-stable bundles. $M$ contains a $(1,1)$-stable bundle $W$ except $g=3, d, r$ both even.
\end{Lemma}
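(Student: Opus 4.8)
The plan is to prove the existence of $(k,l)$-stable bundles by verifying the numerical criterion \eqref{eq:3.1} in each of the three cases and then invoking the dimension count set up in the preceding paragraph. The key observation is that the inequality $(r-n)k+nl<n(r-n)(g-1)$ must be checked for all $1\leq n\leq r-1$, and once it holds the locus of non-$(k,l)$-stable bundles has dimension strictly less than $\dim M$, so the open $(k,l)$-stable locus is nonempty.

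First I would treat the $(0,1)$-stable and $(1,0)$-stable cases. For $(k,l)=(0,1)$ the inequality reads $n<n(r-n)(g-1)$, i.e.\ $1<(r-n)(g-1)$; since $1\leq n\leq r-1$ forces $r-n\geq 1$, the worst case is $n=r-1$, giving $1<(g-1)$, which holds precisely when $g\geq 3$. The $(1,0)$-case is symmetric (it reads $(r-n)<n(r-n)(g-1)$, i.e.\ $1<n(g-1)$, worst at $n=1$), and also holds for $g\geq 3$; alternatively one may deduce it from the $(0,1)$-case by duality using Remark~3.1(ii), since $W$ is $(1,0)$-stable iff $W^{*}$ is $(0,1)$-stable and dualizing preserves stability while sending $\mathcal{L}$ to $\mathcal{L}^{*}$.

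Next I would handle the $(1,1)$-stable case, which is the delicate one. Here \eqref{eq:3.1} becomes $(r-n)+n=r<n(r-n)(g-1)$ for all $1\leq n\leq r-1$. The minimum of $n(r-n)$ over this range is $r-1$, attained at $n=1$ and $n=r-1$, so the binding constraint is $r<(r-1)(g-1)$. For $g\geq 4$ this holds for every $r\geq 2$, and for $g=3$ it reads $r<2(r-1)$, i.e.\ $r>2$, so it holds for all $r\geq 3$; the only failure of the \emph{numerical} criterion is $g=3,\ r=2$. This explains why the statement must carve out an exceptional case at $g=3$, though the precise excluded case in the lemma ($g=3$ with $d,r$ both even) is subtler than the naive numerical bound suggests.

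The main obstacle, and the point I expect to require the most care, is the gap between the numerical sufficient condition \eqref{eq:3.1} and the actual exceptional locus stated in the lemma. The dimension estimate in the preamble is only an upper bound ($\dim\leq(r^{2}-1)(g-1)-n(r-n)(g-1)+(nd-r\delta)$), and strict inequality with $\dim M=(r^2-1)(g-1)$ can still fail to hold for borderline $(n,\delta)$ even when \eqref{eq:3.1} is not quite satisfied; conversely when $(k,l)=(1,1)$, the term $nd-r\delta$ interacts with parity of $d$ and $r$, which is exactly where the condition ``$d,r$ both even'' at $g=3$ enters. I would therefore, in the $g=3$ case, return to the bundle-theoretic argument rather than the crude count: one must show that for the extremal $n$ a destabilizing sub-bundle $F$ with equality forces $E$ into a proper locus unless the parity obstruction $nd\equiv 0 \pmod r$ with both even is met, and this is where I would cite the detailed analysis in \cite{NarasimhanRamanan}. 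The remaining inequalities being routine, the genuine content is isolating this single arithmetic exception.
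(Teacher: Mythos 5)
Your proposal is correct and takes essentially the same route the paper intends: the paper offers no proof beyond the dimension count leading to the criterion \eqref{eq:3.1} and the citation of \cite{NarasimhanRamanan}, and your explicit verification of \eqref{eq:3.1} for $(0,1)$, $(1,0)$ and $(1,1)$ (with the binding case $r<(r-1)(g-1)$, failing numerically only at $g=3$, $r=2$) is precisely the ``easy and elementary'' check being invoked. Your treatment of the $g=3$ borderline --- noting that on the constraint set one has $nd-r\delta\leq(r-n)k+nl$ with $nd-r\delta\equiv nd\pmod{r}$, so parity of $d$ governs whether the strict inequality survives, and deferring the detailed exceptional analysis to \cite{NarasimhanRamanan} --- matches the paper's own deferral.
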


\begin{Lemma}
Let $0\rightarrow V\rightarrow W\rightarrow \mathcal{O}_{p}\rightarrow 0$ be an exact sequence, where $\mathcal{O}_{p}$ is the 1-dimensional skyscraper sheaf at $p\in C$. If $W$ is $(k,l)$-stable, then $V$ is $(k,l-1)$-stable.
\end{Lemma}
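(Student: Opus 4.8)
The plan is to deduce the $(k,l-1)$-stability of $V$ directly from the $(k,l)$-stability of $W$, by observing that the two relevant stability thresholds coincide and then comparing degrees via saturation. First I would record the numerics of the elementary transformation. Since $\mathcal{O}_{p}$ has length $1$ and $W$ is locally free of rank $r$ and degree $d:=\text{deg}(W)$, the kernel $V$ is a torsion-free subsheaf of $W$ of the same rank, hence locally free of rank $r$, with $\text{deg}(V)=d-1$. The key observation is that the threshold governing $(k,l-1)$-stability of $V$ equals the one governing $(k,l)$-stability of $W$:
$$\frac{\text{deg}(V)+k-(l-1)}{r}=\frac{(d-1)+k-l+1}{r}=\frac{d+k-l}{r}.$$
Hence it suffices to show that every proper subbundle $V'\subset V$ satisfies $\frac{\text{deg}(V')+k}{\text{rk}(V')}<\frac{d+k-l}{r}$.

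Next I would pass from subbundles of $V$ to subbundles of $W$. Given a proper subbundle $V'\subset V$ of rank $n<r$ and degree $\delta$, let $W'$ be the saturation of $V'$ inside $W$, that is, the smallest subbundle of $W$ containing $V'$. Then $\text{rk}(W')=n$, and since $W'/V'$ is a torsion sheaf we have $\text{deg}(W')\geq\delta$. As $n<r$, the bundle $W'$ is a proper subbundle of $W$, so the $(k,l)$-stability of $W$ gives $\frac{\text{deg}(W')+k}{n}<\frac{d+k-l}{r}$. Combining this with $\delta\leq\text{deg}(W')$ yields
$$\frac{\text{deg}(V')+k}{\text{rk}(V')}=\frac{\delta+k}{n}\leq\frac{\text{deg}(W')+k}{n}<\frac{d+k-l}{r},$$
which is exactly the inequality required by the definition of $(k,l-1)$-stability, completing the argument.

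I expect no serious obstacle here: the argument is purely numerical once one notices that lowering both the degree and the parameter $l$ by $1$ leaves the threshold $\frac{d+k-l}{r}$ unchanged, so the stability test for $V$ is literally the stability test for $W$ transported along $V'\mapsto W'$. The only points that require a little care are the standard facts, valid for coherent sheaves on a smooth curve, that a torsion-free sheaf is locally free (so that $V$ and $W'$ are genuine bundles and the rank is preserved under saturation) and that saturation inside $W$ does not decrease degree, giving $\text{deg}(W')\geq\delta$; both are elementary and carry the entire weight of the proof.
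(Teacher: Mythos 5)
Your proof is correct and is exactly the standard elementary argument that the paper has in mind when it omits the proof with the remark that it is ``easy and elementary (cf.\ Narasimhan--Ramanan)'': the elementary transformation drops the degree by $1$ so that the $(k,l-1)$-threshold for $V$ coincides with the $(k,l)$-threshold for $W$, and passing from a proper subbundle $V'\subset V$ to its saturation $W'\subset W$ (same rank, degree not smaller) transports the stability inequality. No gaps; the points you flag -- local freeness of $V$ and of saturations on a smooth curve, and $\deg(W')\geq\deg(V')$ -- are indeed the only facts needed.
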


At first, let's recall a class of elliptic curves passing through a generic point, which are called \textbf{elliptic curves of Hecke type}. Let $U_{C}(r, d-1)$ be the moduli space of stable bundles of rank $r$ and degree $d-1$. Let $\mathfrak{D}\subset U_{C}(r, d-1)$
be the open set of (1,0)-stable bundles. Let $\psi: C\times \mathfrak{D}\rightarrow J^{d}(C)$ be defined as $\psi(x,V)=\mathcal{O}_{C}(x)\otimes \text{det}(V)$ and
$\mathfrak{R}_{C}:=\psi^{-1}(\mathcal{L})\subset C\times \mathcal{D}$ be the fibre of $\psi$ at the point $[\mathcal{L}]\in J^{d}(C)$. There exists a projective bundle
$$p:\mathfrak{ P}\rightarrow \mathfrak{R}_{C}$$
such that for any $(x, V)\in\mathfrak{R}_{C}$ we have $p^{-1}(x,V)=\mathbb{P}(V^{*}_{x})$. Let
$V^{*}_{x}\otimes \mathcal{O}_{\mathbb{P}(V^{*}_{x})}\rightarrow \mathcal{O}_{\mathbb{P}(V^{*}_{x})}(1)\rightarrow 0$ be the universal quotient, $f: C\times \mathbb{P}(V^{*}_{x})\rightarrow C$ be the projection, and
 $$0\rightarrow \mathfrak{E}^{*}\rightarrow f^{*}V^{*}\rightarrow _{\{x\}\times\mathbb{P}(V^{*}_{x})}\mathcal{O}_{\mathbb{P}(V^{*}_{x})}(1)\rightarrow 0$$
where $\mathfrak{E}^{*}$ is defined to the kernel of the surjection. Take dual, we have
\begin{equation}
0\rightarrow f^{*}V\rightarrow \mathfrak{E}\rightarrow _{\{x\}\times\mathbb{P}(V^{*}_{x})}\mathcal{O}_{\mathbb{P}(V^{*}_{x})}(-1)\rightarrow 0,
\label{eq:3.2}
\end{equation}
which, at any point $\xi=(V^{*}_{x}\rightarrow \Lambda \rightarrow 0)\in \mathbb{P}(V^{*})$, gives exact sequence
\[\begin{CD}
0@>>>V@>\iota>>\mathfrak{E}_{\xi}@>>>\mathcal{O}_{x}\rightarrow 0
\end{CD} \]
on $C$ such that $\text{ker}(\iota_{x})=\Lambda^{*}\subset V_{x}$. $V$ being (1,0)-stable implies stability of $\mathfrak{E}_{\xi}$. Thus (\ref{eq:3.2}) defines
\begin{equation}
\Psi_{(x,V)}: \mathbb{P}(V^{*}_{x})=p^{-1}(x,V)\longrightarrow M.
\label{eq:3.3}
\end{equation}

\begin{Definition}(cf. Definition 3.4 of \cite{Sun})
The images (under $\{\Psi_{(x,V)}\}_{(x,V)\in \mathfrak{R}_{C}}$) of lines in the fiber of $p: \mathfrak{P}\rightarrow \mathfrak{R}_{C}$ are the so called \textbf{Hecke curves} in $M$. The images (under $\{\Psi_{(x,V)}\}_{(x,V)\in \mathfrak{R}_{C}}$) of elliptic curves in the fibers of $p: \mathfrak{P}\rightarrow \mathfrak{R}_{C}$ are called \textbf{elliptic curves of Hecke type}.
\end{Definition}

It's known that (cf. Lemma 5.9 of \cite{NarasimhanRamanan}) that the morphisms in (\ref{eq:3.3}) are closed immersion, and the images of smooth elliptic curves $B\subset \mathbb{P}(V^{*}_{x})$ with degree 3 are smooth elliptic curves on $M$ that pass through generic point of $M$, which are elliptic curves of Hecke type and have degree $6r$(cf. Example 3.5 of \cite{Sun}).

If we do not require the curve $\phi: B\rightarrow M$ passing through generic point of $M$, there are elliptic curves with smaller degree. Now, let's recall a class elliptic curves passing through generic points of $M$, which are called \emph{elliptic curves of split type}.
For any given $r$ and $d$, let $r_{1}, r_{2}$ be positive integers and $d_{1}, d_{2}$ be integers that satisfy the equalities $r_{1}+r_{2}=r, d_{1}+d_{2}=d$ and
$$r_{1}\frac{d}{(r,d)}-d_{1}\frac{r}{(r,d)}=1, d_{2}\frac{r}{(r,d)}-r_{2}\frac{d}{(r,d)}=1.$$
Let $U_{C}(r_{1},d_{1})$ (resp. $U_{C}(r_{2},d_{2})$) be the moduli space of stable vector bundles with rank $r_{1}$ (resp. $r_{2}$) and degree $d_{1}$ (resp. $d_{2}$). Then, since $(r_{1},d_{1})=1$ and $(r_{2},d_{2})=1$, there are universal bundles $\mathcal{V}_{1}, \mathcal{V}_{2}$ on $C\times U_{C}(r_{1},d_{1})$ and $C\times U_{C}(r_{2},d_{2})$ respectively. Consider
\[\begin{CD}
U_{C}(r_{1},d_{1})\times U_{C}(r_{2},d_{2})@>\text{det}(\bullet)\times \text{det}(\bullet)>> J_{C}^{d_{1}}\times J_{C}^{d_{2}}@>(\bullet)\otimes (\bullet)>>J_{C}^{d},
\end{CD} \]
let $\mathcal{R}(r_{1},d_{1})$ be its fiber at $[\mathcal{L}]\in J_{C}^{d}$. The pullback of $\mathcal{V}_{1}, \mathcal{V}_{2}$ by the projection $C\times \mathcal{R}(r_{1}, d_{1})\rightarrow C\times U_{C}(r_{i},d_{i}) (i=1, 2)$ is still denoted by $\mathcal{V}_{1}, \mathcal{V}_{2}$ respectively. Let $p: C\times \mathcal{R}(r_{1}, d_{1})\rightarrow \mathcal{R}(r_{1}, d_{1})$ and
$\mathcal{G}=R^{1}p_{*}(\mathcal{V}_{2}^{*}\otimes \mathcal{V}_{1}),$
which is locally free of rank $r_{1}r_{2}(g-1)+(r,d)$. Let
$$q: P(r_{1}, d_{1})=\mathbb{P}(\mathcal{G})\rightarrow \mathcal{R}(r_{1}, d_{1})$$
be the projective bundle parametrizing 1-dimensional subspaces of $\mathcal{G}_{t} (t\in \mathcal{R}(r_{1}, d_{1}))$ and $f: C\times P(r_{1},d_{1})\rightarrow C$, $\pi: C\times P(r_{1},d_{1})\rightarrow P(r_{1},d_{1})$ be the projections. Then  there is a universal extension
\begin{equation}
0\rightarrow (id\times q)^{*}\mathcal{V}_{1}\otimes \pi^{*}\mathcal{O}_{P(r_{1},d_{1})}(1)\rightarrow \mathcal{E}\rightarrow (id\times q)^{*}\mathcal{V}_{2}\rightarrow 0
\label{eq:3.4}
\end{equation}
on $C\times P(r_{1}, d_{1})$ such that for any $x=([V_{1}], [V_{2}], [e])\in P(r_{1}, d_{1})$, where $[V_{i}]\in U_{C}(r_{i},d_{i})$ with $\text{det}(V_{1})\text{det}(V_{2})=\mathcal{L}$ and $[e]\subset H^{1}(C, V_{2}^{*}\otimes V_{1})$ being a line through a origin, the bundle $\mathcal{E}|_{C\times \{x\}}$ is the isomorphic class of vector bundles $V$ given by extensions
$$0\rightarrow  V_{1}\rightarrow V\rightarrow V_{2}\rightarrow 0$$
that defined by vectors on the line $[e]\subset H^{1}(C, V_{2}^{*}\otimes V_{1})$. Then $V$ must be stable( cf. Lemma 2.2 of \cite{MokSun09}), and the sequence (\ref{eq:3.4}) defines
$$\Phi: P(r_{1}, d_{1})\rightarrow SU_{C}(r,\mathcal{L})=M.$$
On each fiber $q^{-1}(\xi)=\mathbb{P}(H^{1}(V_{2}^{*}\otimes V_{1}))$ at $\xi=(V_{1},V_{2})$, the morphisms
\begin{equation}
\Phi_{\xi}:=\Phi|_{q^{-1}(\xi)}: q^{-1}(\xi)=\mathbb{P}(H^{1}(V_{2}^{*}\otimes V_{1}))\rightarrow M
\label{eq:3.5}
\end{equation}
is birational and $\Phi_{\xi}^{*}(-K_{M})=\mathcal{O}_{\mathbb{P}(H^{1}(V_{2}^{*}\otimes V_{1}))}(2(r,d))$ (cf. Lemma 2.4 of \cite{MokSun09}).

\begin{Definition}(cf. Example 3.6 of \cite{Sun})
The images (under $\{\Phi_{\xi}\}_{\xi\in \mathcal{R}(r_{1},d_{1})}$)of smooth elliptic curves in the fibers of $q: P(r_{1}, d_{1})=\mathbb{P}(\mathcal{G})\rightarrow \mathcal{R}(r_{1}, d_{1})$ are called \textbf{elliptic curves of split type}. For any smooth elliptic curve $B\subset q^{-1}(\xi)=\mathbb{P}(H^{1}(V_{2}^{*}\otimes V_{1}))$ of degree 3, the image of $\Phi_{\xi}|_{B}: B\rightarrow M$ is of degree $6(r,d)$, which is so called \textbf{elliptic curves of split type with minimal degree}.
\end{Definition}

 When $r=2$ and $d=1$, Sun has shown that any essential elliptic curve has degree at least 6, it has degree 6 if and only if it is an elliptic curve of split type with minimal degree. And then Sun conjectures the result holds for any rank $r$ and degree $d$ (i.e., \textbf{Sun's conjecture}). But when $r=3, d=1$, there is a class of elliptic curves in $M$ of degree 6, which are not the elliptic curves of split type.

Note that when $r=3$ and $ d=1$, we must have $r_{1}=1$ and $d_{1}=0$. Let $\mathcal{R}_{\mathcal{L}}:=\mathcal{R}(0,0)$ and let $ \mathcal{P}:=P(0,0)$.

\begin{Proposition}
Let $B\subset \mathcal{P}$ be an elliptic curve, which is mapped to a point in $J_{C}$  but is not in any fiber of $q$ and $\text{deg}(\mathcal{O}_{\mathcal{P}}(1)|_{B})=1$. If the normalization of the image of $B$ induced by $q$ is a line in $SU_{C}(2,\mathcal{L}')$ for some degree 1 line bundle $\mathcal{L}'$ on $C$ and $B$ is a degree 2 cover over its image in $SU_{C}(2,\mathcal{L}')$, then $\Phi|_{B}: B\rightarrow M$ is an essential elliptic curve on $M$ of degree 6.
\label{prop:3.1}
\end{Proposition}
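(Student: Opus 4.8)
The plan is to produce the rank-$3$ bundle $E$ on $C\times B$ that defines $\Phi|_B$ and to compute its invariant $\Delta(E)$, which by the degree formula $\deg\phi^*(-K_M)=\Delta(E)$ (Lemma~2.1) equals $\deg(\Phi|_B)^*(-K_M)$. First I would restrict the universal extension (\ref{eq:3.4}) along $C\times B\hookrightarrow C\times\mathcal{P}$. Since $B$ maps to a single point of $J_C$, the factor $(id\times q)^*\mathcal{V}_1$ restricts to $\mathrm{pr}_C^*L_0$ for a fixed degree-$0$ line bundle $L_0$, while $\pi^*\mathcal{O}_\mathcal{P}(1)$ restricts to $\mathrm{pr}_B^*N$ with $N=\mathcal{O}_\mathcal{P}(1)|_B$ of degree $1$. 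Writing $L:=\mathrm{pr}_C^*L_0\otimes\mathrm{pr}_B^*N$ and $E_2:=(id\times q)^*\mathcal{V}_2|_{C\times B}$, I obtain the exact sequence
\[0\rightarrow L\rightarrow E\rightarrow E_2\rightarrow 0,\]
in which $E_2$ is the pullback of the universal rank-$2$ bundle along the degree-$2$ cover $B\to\ell\subset SU_C(2,\mathcal{L}')$, with $\mathcal{L}'=\mathcal{L}\otimes L_0^{-1}$ of degree $1$.

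Next I would run the degree formula of Theorem~\ref{th:2.2} with respect to $f:C\times B\to C$. The required inputs are the slopes $\mu(E)=1/3$ and $\mu(L)=0$ on a general $\pi$-fibre $C\times\{b\}$, the self-intersection $\Delta(E_2)$, and the relative Harder--Narasimhan data of $E$ over $C$ (the slopes $\mu_i$ on a general fibre $X_t=\{t\}\times B$). Because the image of $B$ in $SU_C(2,\mathcal{L}')$ has fixed determinant $\mathcal{L}'$, the determinant $\det E_2$ is a pullback from the two factors, so $c_1(E_2)$ carries no mixed $H^1(C)\otimes H^1(B)$ component; combined with $\Delta(E_2)=\deg(B\to\ell)\cdot\deg_\ell(-K_{SU_C(2,\mathcal{L}')})$, obtained from Lemma~2.1 applied to the rank-$2$ moduli and from $\ell$ being a line, this pins down $c_2(E_2)$. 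The delicate point is the structure of $E|_{X_t}$: I must determine $\deg_\ell(\mathcal{V}_2|_{\{t\}\times\ell})$ for general $t$, equivalently the slope of $E_2|_{X_t}$, since this fixes $\mu_2$ and decides whether $E|_{X_t}$ is semistable. Substituting into (\ref{eq:2.1}) — or, once $E|_{X_t}$ is verified semistable, directly into $\Delta(E)=2rc_2(E)-(r-1)c_1(E)^2$ using Theorem~\ref{th:2.4} — then yields $\Delta(E)=6$. As a cross-check I would also write $\Phi^*\Theta=\mathcal{O}_\mathcal{P}(1)\otimes q^*A$ (valid since $\Phi_\xi^*\Theta=\mathcal{O}(1)$ on each fibre by (\ref{eq:3.5})) and confirm the same value through $B\cdot\Phi^*(-K_M)=2\bigl(\deg\mathcal{O}_\mathcal{P}(1)|_B+\deg(q|_B)^*A\bigr)$.

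It then remains to show that $\Phi|_B$ is essential, i.e. that the normalisation of $\Phi(B)$ has genus $1$ rather than being rational. Here the hypothesis $\deg(\mathcal{O}_\mathcal{P}(1)|_B)=1$ is decisive: over a general point of $\ell$ the two preimages $b_1,b_2\in B$ of the degree-$2$ cover carry the same pair $(V_1,V_2)$, but the degree-$1$ class $\mathcal{O}_\mathcal{P}(1)|_B$ separates their extension classes $[e_1]\neq[e_2]$ in $\mathbb{P}(H^1(V_2^*\otimes V_1))$, so the bundles $\mathcal{E}_{b_1}\not\cong\mathcal{E}_{b_2}$ and $\Phi|_B$ is generically injective. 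Thus $\Phi|_B:B\to M$ is birational onto its image, whose normalisation is $B$ itself, an elliptic curve, and $\Phi|_B$ does not factor through any rational curve. Finally, since $B$ is not contained in a fibre of $q$, the curve $\Phi|_B$ cannot be an elliptic curve of split type; together with $\Delta(E)=6$ this exhibits a degree-$6$ essential elliptic curve that is not of split type.

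The main obstacle I anticipate is the exact determination of the relative Harder--Narasimhan filtration of $E$ over $C$ — concretely, pinning down $\deg_\ell(\mathcal{V}_2|_{\{t\}\times\ell})$ and hence the slopes $\mu_i$ — because the output of (\ref{eq:2.1}) depends sensitively on it: a shift in this degree alters the answer through the slope term $(\mu(E)-\mu(E_i))\,\mathrm{rk}(E_i)(\mu_i-\mu_{i+1})$. I would control this by exploiting the explicit geometry of the line $\ell$ (for instance via the elementary-transformation bookkeeping of Lemma~\ref{lm2.6}), so as to identify the semistable graded pieces on $X_t$ unambiguously. A secondary subtlety is verifying that $[e_1]\neq[e_2]$ for a general fibre, which is precisely what upgrades an elliptic double cover of a rational curve to an \emph{essential} elliptic curve, and which is exactly the role played by the normalisation and degree hypotheses on $\mathcal{O}_\mathcal{P}(1)|_B$.
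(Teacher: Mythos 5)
Your skeleton matches the paper's: restrict the universal extension (\ref{eq:3.4}) to $C\times B$, obtain $0\to L\to E\to E_2\to 0$, and compute $\Delta(E)$. But the step you yourself flag as ``the main obstacle'' is precisely where the proof lives, and your plan for it does not close the gap. Write $\alpha=[\{pt\}\times B]$, $\beta=[C\times\{pt\}]$ and $n=\deg\bigl(\det E_2|_{\{t\}\times B}\bigr)$. Since the determinant of $E_2$ along $B$-fibres is the fixed $\mathcal{L}'$, one has $c_1(L)=\beta$, $c_1(E_2)=\alpha+n\beta$, and your relation $\Delta(E_2)=4$ (degree $2$ cover of a line times $\deg_\ell(-K)=2(2,1)=2$) gives only $4c_2(E_2)=4+2n$; substituting, $\Delta(E)=8-n$. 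So the conclusion $\Delta(E)=6$ is exactly the statement $n=2$, and nothing in your argument produces it: the constraint $\Delta(E_2)=4$ merely forces $n$ even, and retwisting the universal bundle shifts $n$ by even integers, so no amount of ``bookkeeping'' with Lemma \ref{lm2.6} can pin it down --- indeed no elementary transformation along fibres occurs here at all. The paper's proof supplies the missing input from the classification of lines in $SU_{C}(2,\mathcal{L}')$ (\cite{Sun05}, \cite{MokSun09}): every line is the image of the closed immersion $\theta:\mathbb{P}H^{1}(L_{3}^{-1}\otimes L_{2})\to SU_{C}(2,\mathcal{L}')$ defined by the explicit extension (\ref{eq:3.6}), with $\deg L_2=0$, $\deg L_3=1$; the degree-$2$ hypothesis lifts $B\to SU_C(2,\mathcal{L}')$ to $\varphi:B\to\mathbb{P}H^{1}(L_{3}^{-1}\otimes L_{2})$ with $\deg\varphi^{*}\mathcal{O}(1)=2$, so $E_2=(id\times\varphi)^{*}\mathcal{E}'$ sits in (\ref{eq:3.8}) and has vertical degree exactly $2$. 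With the resulting filtration of $E$ by $f^{*}L_{1}\otimes\pi^{*}\mathcal{O}(1)$, $f^{*}L_{2}\otimes\pi^{*}\mathcal{O}(2)$, $f^{*}L_{3}$, one reads off $c_1(E)=\alpha+3\beta$, $c_2(E)=3$, hence $\Delta(E)=6\cdot 3-2\cdot 6=6$ directly, without invoking Theorem \ref{th:2.2} at all. This structure theorem for lines is a genuinely necessary idea, not a technical refinement, and its absence makes your computation indeterminate.

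A secondary remark on essentiality: your separation argument is the right instinct (the paper is terse on this point), but as stated it leans on more than (\ref{eq:3.5}) provides. Two preimages $b_1\neq b_2$ of a general point of $\ell$ lie in one fibre $q^{-1}(\xi)$ and certainly give distinct classes $[e_1]\neq[e_2]$ in $\mathbb{P}(H^1(V_2^*\otimes V_1))$ --- that is automatic and does not use $\deg\mathcal{O}_{\mathcal{P}}(1)|_{B}=1$ --- but to conclude $\mathcal{E}_{b_1}\not\cong\mathcal{E}_{b_2}$ you need $\Phi_\xi$ to be injective on these particular pairs, whereas birationality of $\Phi_\xi$ only excludes non-injectivity on a general pair; your pairs sweep out a one-parameter family which could a priori sit inside the non-injectivity locus, so one should argue injectivity there (e.g.\ via $\mathrm{Hom}(V_2,L_1)=0$ and uniqueness of the destabilizing structure) rather than quote genericity.
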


\begin{proof}
Let $p_{1}: \mathcal{R}_{\mathcal{L}}\rightarrow J_{C}$, $p_{2}: \mathcal{R}_{\mathcal{L}}\rightarrow U_{C}(2,1)$ be the projections.
If the normalization of the image of $B$ induced by $q$ is a line in $SU_{C}(2,\mathcal{L}')$, then $p_{1}\circ q(B)=[\mathcal{L}\otimes \mathcal{L}'^{-1}]$ is a point of $J_{C}$ and let $L_{1}:=\mathcal{L}\otimes \mathcal{L}'^{-1}$. And then by the results of lines in \cite{Sun05} and \cite{MokSun09}, there are line bundles $L_{2}$ and $L_{3}$ on $C$ of degrees 0 and 1 respectively with $L_{2}\otimes L_{3}=\mathcal{L}'$, such that $B\rightarrow SU_{C}(2,\mathcal{L}')$ factors as the composition of $\varphi: B\rightarrow \mathbb{P}H^{1}(L_{3}^{-1}\otimes L_{2})$ with $\theta: \mathbb{P}H^{1}(L_{3}^{-1}\otimes L_{2})\rightarrow SU_{C}(2,\mathcal{L}')$ such that $\varphi^{*}\mathcal{O}_{\mathbb{P}H^{1}(L_{3}^{-1}\otimes L_{2})}=\mathcal{O}_{B}(2)$. Where $\theta: \mathbb{P}H^{1}(L_{3}^{-1}\otimes L_{2})\rightarrow SU_{C}(2,\mathcal{L}\otimes L_{1}^{-1})$ is a closed immersion defined by a vector bundle $\mathcal{E}'$ on $C\times \mathbb{P}H^{1}(L_{3}^{-1}\otimes L_{2})$ satisfying an exact sequence
\begin{equation}
0\rightarrow f^{*}L_{2}\otimes \pi^{*}\mathcal{O}_{\mathbb{P}H^{1}(L_{3}^{-1}\otimes L_{2})}(1)\rightarrow \mathcal{E}'\rightarrow f^{*}L_{3}\rightarrow 0.
\label{eq:3.6}
\end{equation}
By the construction of $q: \mathcal{P}\rightarrow \mathcal{R}_{\mathcal{L}}$, the restriction of (\ref{eq:3.4}) to $C\times B$ equals to
\begin{equation}
0\rightarrow f^{*}L_{1}\otimes \pi^{*}\mathcal{O}(1)\rightarrow E\rightarrow E'\rightarrow 0,
\label{eq:3.7}
\end{equation}
where $E':=(id_{C}\times\varphi)^{*}\mathcal{E}'$ satisfying
\begin{equation}
(0\rightarrow f^{*}L_{2}\otimes \pi^{*}\mathcal{O}_{B}(2)\rightarrow E'\rightarrow f^{*}L_{3}\rightarrow 0)\cong(id_{C}\times \varphi)^{*}(\ref{eq:3.6}).
\label{eq:3.8}
\end{equation}
Since $\Phi$ is defined by the sequence (\ref{eq:3.4}), so $\Phi|_{B}: B\rightarrow M$ is defined by the sequence (\ref{eq:3.7}). Thus $\text{deg}\Phi|_{B}^{*}(-K_{M})=\Delta(E)$. By considering sequences (\ref{eq:3.7}) and (\ref{eq:3.8}), we have $\Delta(E)=6$.
\end{proof}

 \begin{Remark}
 In fact, an elliptic curve in above proposition is defined by a vector bundle $E$ on $C\times B$ obtained by non-trivial extensions
\begin{equation}
0\rightarrow f^{*}L_{1}\otimes \pi^{*}\mathcal{O}(1)\longrightarrow E\longrightarrow E'\rightarrow 0,
\label{eq:1}
\end{equation}
\begin{equation}
0\rightarrow f^{*}L_{2}\otimes \pi^{*}\mathcal{O}(2)\longrightarrow E'\longrightarrow f^{*}L_{3}\rightarrow 0,
\label{eq:2}
\end{equation}
where $L_{1}, L_{2}$ are degree 0 line bundle on $C$ and $L_{3}$ is a degree 1 line bundle on $C$.

And, moreover, non-trivial extensions (\ref{eq:1}) and (\ref{eq:2}) exist.

Firstly, since
$$Ext^{1}(f^{*}L_{3}, f^{*}L_{2}\otimes \pi^{*}\mathcal{O}(2))\cong H^{1}(X, f^{*}(L_{3}^{-1}\otimes L_{2})\otimes \pi^{*}\mathcal{O}(2))$$
and $$ H^{1}(X, f^{*}(L_{3}^{-1}\otimes L_{2})\otimes \pi^{*}\mathcal{O}(2))\cong H^{1}(C, L_{3}^{-1}\otimes L_{2})\otimes H^{0}(B, {O}(2))$$
the second isomorphism because $H^{0}(C, L_{3}^{-1}\otimes L_{2})=0$, then we have
  $$\text{dim}Ext^{1}(f^{*}L_{3}, f^{*}L_{2}\otimes \pi^{*}\mathcal{O}(2))=2g,$$
So, there exist non-trivial extension(\ref{eq:2}).

Sencondly, on the one hand by the Hirzebruch-Riemann-Roch theorem, we have
\begin{equation}
\chi(E'^{*}\otimes f^{*}L_{1}\otimes\pi^{*}\mathcal{O}(1))=\text{deg}(ch(E'^{*}\otimes f^{*}L_{1}\otimes\pi^{*}\mathcal{O}(1)).td(T_{X}))_{2}=-1.
\label{eq:3}
\end{equation}
on the other hand,
\begin{equation}
\chi(E'^{*}\otimes f^{*}L_{1}\otimes \pi^{*}\mathcal{O}(1))=\Sigma_{i=0}^{2}h^{i}(X,E'^{*}\otimes f^{*}L_{1}\otimes \pi^{*}\mathcal{O}(1)).
\label{eq:4}
\end{equation}
Thus
 \begin{equation}
  \begin{aligned}
 ext^{1}(E', f^{*}L_{1}\otimes \pi^{*}\mathcal{O}(1))&=&h^{1}(X,E'^{*}\otimes f^{*}L_{1}\otimes \pi^{*}\mathcal{O}(1))\\
 &\geq & h^{2}(X,E'^{*}\otimes f^{*}L_{1}\otimes \pi^{*}\mathcal{O}(1))+1.
  \end{aligned}
 \label{eq:5}
 \end{equation}
By Serre duality, we have
\begin{equation}
H^{2}(X, E'^{*}\otimes f^{*}L_{1}\otimes \pi^{*}\mathcal{O}(1))\cong H^{0}(X,E'\otimes f^{*}L_{1}^{-1}\otimes\pi^{*}\mathcal{O}(-1)\otimes \omega_{X})^{\vee}.
\label{eq:7}
\end{equation}
Since $\Omega_{X}=f^{*}\Omega_{C}\oplus \pi^{*}\Omega_{B}$, we have $\omega_{X}=\text{det}\Omega_{X}=f^{*}\omega_{C}\otimes \pi^{*}\omega_{B}$.
Tensoring (\ref{eq:2}) by $(f^{*}L_{1}^{-1}\otimes\pi^{*}\mathcal{O}(-1)\otimes \omega_{X} )$, we have
 \begin{equation}\small
 0\rightarrow f^{*}(L_{2}\otimes L_{1}^{-1})\otimes \pi^{*}\mathcal{O}(1)\otimes\omega_{X} \rightarrow E'\otimes f^{*}L_{1}^{-1}\otimes\pi^{*}\mathcal{O}(-1)\otimes \omega_{X}\rightarrow f^{*}(L_{3}\otimes L_{1}^{-1})\pi^{*}\mathcal{O}(-1)\otimes \omega_{X}\rightarrow 0.
 \end{equation}
And, then taking cohomology, we have
\begin{equation}
0\rightarrow H^{0}(X,f^{*}(L_{2}\otimes L_{1}^{-1})\otimes \pi^{*}\mathcal{O}(1)\otimes\omega_{X})\rightarrow H^{0}(X,E'\otimes f^{*}L_{1}^{-1}\otimes\pi^{*}\mathcal{O}(-1)\otimes \omega_{X})\rightarrow \cdots.
\end{equation}
Which implies
\begin{equation}
\begin{aligned}
h^{0}(X,E'\otimes f^{*}L_{1}^{-1}\otimes\pi^{*}\mathcal{O}(-1)\otimes \omega_{X})&\geq&h^{0}(X,f^{*}(L_{2}\otimes L_{1}^{-1})\otimes \pi^{*}\mathcal{O}(1)\otimes\omega_{X})\\
&=&g-1+h^{0}(C,L_{2}^{-1}\otimes L_{1}).\ \ \ \ \ \ \ \ \ \ \ \ \ \
\end{aligned}
\label{eq:6}
\end{equation}
Then by (\ref{eq:5})(\ref{eq:6}) and (\ref{eq:7}), we have
\begin{equation}
 ext^{1}(E', f^{*}L_{1}\otimes \pi^{*}\mathcal{O}(1))\geq g+h^{0}(C,L_{2}^{-1}\otimes L_{1})\geq g\geq2,
\end{equation}
Thus, there exist non-trivial extensions (\ref{eq:1}).

\end{Remark}

\section{Minimal Elliptic Curves on Moduli Spaces}

In this section, we consider the moduli space $M$ of rank 3 stable bundles on $C$ with a fixed determinant $\mathcal{L}$ of degree 1. We also assume that the curve $C$ is generic in the sense that $C$ admits no surjective morphism to an elliptic curve. With this assumption, we know that $\text{Pic}(C\times B)=\text{Pic}(C)\times \text{Pic}(B)$ for any elliptic curve $B$.

For a morphism $\phi: B\rightarrow M$, it may happen that the normalization of $\phi(B)$ is a rational curve. To avoid this case, we assume that $\phi: B\rightarrow M$ is an essential elliptic curve of $M$ in this section. Let $E$ be the vector bundle on $X=C\times B$ that defines $\phi$. Consider the relative Harder-Narasimhan filtration (cf Theorem 2.3.2, page 45 in \cite{HuybrechtsLehn})
$$0=E_{0}\subset E_{1}\subset\cdots\subset E_{n}=E$$
over $C$. When $r=3$, there are three choices for $n: 1,  2 \text{ and } 3$.

\begin{Proposition} When $n=3$, we have $\Delta(E)\geq 10$. If $g\geq 3$ and $\phi: B\rightarrow M$ passes through a generic point of $M$, then $\Delta(E)\geq 18$.
\label{prop:4.1}
\end{Proposition}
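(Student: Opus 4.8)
The plan is to feed the relative Harder--Narasimhan data into the degree formula (\ref{eq:2.1}) and then eliminate the finitely many numerically minimal configurations on geometric grounds. Since $r=3$ and $n=3$, every quotient $F_i=E_i/E_{i-1}$ has rank $1$, so each summand $c_2(F_i)-\frac{\mathrm{rk}(F_i)-1}{2\,\mathrm{rk}(F_i)}c_1(F_i)^2$ reduces to $c_2(F_i)\ge 0$, with equality exactly when $F_i$ is locally free; under the standing genericity hypothesis $\mathrm{Pic}(C\times B)=\mathrm{Pic}(C)\times\mathrm{Pic}(B)$ this forces $F_i=f^*A_i\otimes\pi^*N_i$. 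Writing $e_i=\deg(E_i|_{C\times\{b\}})$ for generic $b$ and $\mu_i=\deg(F_i|_{X_t})\in\mathbb Z$, formula (\ref{eq:2.1}) becomes
\[
\Delta(E)=6\Big(\textstyle\sum_{i}c_2(F_i)+\big(\tfrac13-e_1\big)(\mu_1-\mu_2)+\big(\tfrac23-e_2\big)(\mu_2-\mu_3)\Big).
\]
Stability of $E|_{C\times\{b\}}$ applied to the proper nonzero subsheaves $E_1|_{C\times\{b\}}\subset E_2|_{C\times\{b\}}$ forces $e_1\le 0$ and $e_2\le 0$, while the Harder--Narasimhan inequalities give $\mu_1-\mu_2\ge 1$ and $\mu_2-\mu_3\ge 1$. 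Substituting these yields at once $\Delta(E)\ge 6$.

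To reach the sharper bound $\Delta(E)\ge 10$ I would enumerate the configurations with $\Delta(E)\in\{6,8\}$. A short arithmetic check shows each forces $\sum_i c_2(F_i)=0$ (so all $F_i=f^*A_i\otimes\pi^*N_i$ are line bundles), $e_1=e_2=0$ (hence $(\deg A_1,\deg A_2,\deg A_3)=(0,0,1)$), and adjacent slope gaps $(\mu_1-\mu_2,\mu_2-\mu_3)$ equal to $(1,1)$ for $\Delta=6$ and to $(2,1)$ for $\Delta=8$. The remaining task is to show such $E$ cannot define an \emph{essential} elliptic curve, i.e.\ that $\phi$ must factor through a nonconstant map $B\to\mathbb P^1$. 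Because all $F_i$ are products, the isomorphism class of $E|_{C\times\{b\}}$ varies only through the extension classes, whose $b$-dependence is governed by evaluation of sections of the line bundles $N_iN_j^{-1}$ on $B$, of degrees $\mu_i-\mu_j$. When $\Delta=6$ every such degree is $\le 2$, so the only nonconstant variation runs through the degree-$2$ system $|N_1N_3^{-1}|$, i.e.\ through a $2:1$ map $B\to\mathbb P^1$, and $\phi$ is not essential. The case $\Delta=8$ is the genuine obstacle, since the degree-$3$ system $|N_1N_3^{-1}|$ now appears and can embed $B$; here I would show that in the $(2,1)$ configuration the $\mathrm{Ext}^1(F_3,F_1)$-component of the extension is absorbed by the subsheaf $E_1\subset E_2$ (equivalently, can be removed by re-choosing the splitting of the filtration), so the remaining genuine variation again factors through the $2:1$ map attached to $|N_1N_2^{-1}|$. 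This separates $(2,1)$ (rational, hence excluded) from the first admissible configuration $(1,2)$, which realizes $\Delta=10$.

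The generic-point statement is then immediate, and is where $g\ge 3$ enters. If $\phi$ passes through a generic point of $M$, then for generic $b$ the bundle $E|_{C\times\{b\}}$ is $(1,1)$-stable; such bundles exist for all $g\ge 3$ when $r=3$ (the excluded case of the existence lemma, $g=3$ with $r,d$ both even, does not arise since $r=3$ is odd), and $(1,1)$-stability is an open condition. Applying the defining inequality of $(1,1)$-stability to the saturations of the rank-$1$ and rank-$2$ subsheaves sharpens the bounds to $e_1\le -1$ and $e_2\le -1$. Substituting into the displayed formula gives
\[
\Delta(E)\ge 6\Big(\big(\tfrac13+1\big)(\mu_1-\mu_2)+\big(\tfrac23+1\big)(\mu_2-\mu_3)\Big)\ge 6\big(\tfrac43+\tfrac53\big)=18 .
\]
I expect the main difficulty to lie entirely in the elliptic case, specifically in eliminating the $\Delta=8$ configuration $(2,1)$: unlike $\Delta=6$ it is not excluded by a naive degree count, and ruling it out requires showing that its degree-$3$ extension data contributes no genuine elliptic variation to $\phi$. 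The genericity of $C$ (no nonconstant map to an elliptic curve, together with the splitting of $\mathrm{Pic}(C\times B)$) is used throughout, to guarantee that the $F_i$ are products and that the slope data $\mu_i$ are integral.
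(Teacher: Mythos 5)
Your reduction is sound up to the point where the real work begins, and your generic-point half is essentially the paper's argument: $(1,1)$-stability (available for $g\geq 3$ since $r=3$ is odd) forces $\deg E_1\leq -1$ and $\deg E_2\leq -1$, and substituting into the degree formula gives $\Delta(E)\geq 18$. Your enumeration is also correct: $\Delta(E)$ is even here, and $\Delta(E)\in\{6,8\}$ forces $c_2(F_i)=0$, $\deg E_1=\deg E_2=0$, and slope gaps $(\mu_1-\mu_2,\mu_2-\mu_3)=(1,1)$ resp.\ $(2,1)$. But the elimination of these two configurations --- which is the entire content of the bound $\Delta(E)\geq 10$ --- is exactly what you leave unproven, and the sketch you offer points at the wrong piece of the extension data. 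The paper's argument is short: normalize $\mu_3=0$, pass to the quotient $E'=E/E_1$, which by the snake lemma sits in $0\to f^*\mathcal{L}_2\otimes\pi^*\mathcal{O}(\mu_2)\to E'\to f^*\mathcal{L}_3\to 0$ with $\deg\mathcal{L}_2=0$, $\deg\mathcal{L}_3=1$. For every $y\in B$ the restriction of this sequence to $C\times\{y\}$ is non-split, since a splitting would exhibit the slope-$0$ line bundle $\mathcal{L}_2$ as a quotient of the stable slope-$\frac13$ bundle $E_y$. Hence the fiberwise extension classes are nowhere vanishing and define an honest morphism $\psi: B\to \mathbb{P}H^1(\mathcal{L}_3^{-1}\otimes\mathcal{L}_2)$ with $\psi^*\mathcal{O}(1)=\mathcal{O}(\mu_2)$, of degree \emph{exactly} $\mu_2$. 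Now $\psi$ can be neither constant (then $\psi^*\mathcal{O}(1)$ would have degree $0$, not $\mu_2\geq 1$) nor nonconstant with $\deg\psi^*\mathcal{O}(1)=1$ (an elliptic curve admits no such map to projective space), so $\mu_2\geq 2$. This kills $(1,1)$ and $(2,1)$ simultaneously and yields $\Delta(E)=2(\mu_1-\mu_2)+4\mu_2\geq 2+8=10$.

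Note what this means for your plan. First, the minimal configurations are not merely non-essential: they are nonexistent --- no bundle $E$ with all $E_y$ stable realizes them --- so essentialness is never invoked in this step at all. Second, your heuristic for $(1,1)$ (``the degree-$1$ systems give no nonconstant variation, so everything factors through the degree-$2$ system $|N_1N_3^{-1}|$'') misses the dichotomy above: the degree-$1$ extension datum attached to $N_2N_3^{-1}$ cannot be constant either, because it is a nowhere-zero section of a trivial bundle twisted by a line bundle of positive degree; there is no residual variation left to factor through a $2:1$ map. Third, in the $(2,1)$ case you propose to absorb the $\mathrm{Ext}^1(F_3,F_1)$-component and to work with $|N_1N_2^{-1}|$ of degree $\mu_1-\mu_2=2$, but the actual obstruction lives in the $F_2$--$F_3$ extension, of degree $\mu_2-\mu_3=1$ --- precisely the piece your sketch does not touch --- and you yourself flag this case as unresolved. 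As written, the proposal establishes $\Delta(E)\geq 6$ and the generic-point bound $18$, but not the claimed $\Delta(E)\geq 10$.
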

\begin{proof} Let $0=E_{0}\subset E_{1}\subset E_{2}\subset E_{3}=E$ be the relative Harder-Narasimhan filtration over $C$. Let $F_{i}=E_{i}/E_{i-1} (i=1, 2, 3)$, then we have exact sequences
$$0\rightarrow E_{1}|_{X_{t}}\rightarrow E_{2}|_{X_{t}}\rightarrow F_{2}|_{X_{t}}\rightarrow 0 \quad\text{ and }\quad
0\rightarrow E_{2}|_{X_{t}}\rightarrow E|_{X_{t}}\rightarrow F_{3}|_{X_{t}}\rightarrow 0$$
on each fiber $X_{t}=\{t\}\times B$ of $f: X\rightarrow C$ since $\{F_{i}\}_{1=1, 2, 3}$ are flat over $C$. Thus $E_{1}$ is locally free ( cf Lemma 1.27 of \cite{Simpson}) and by the Theorem \ref{th:2.2}
\begin{equation}
\Delta(E)
=6c_{2}(F_{2})+6c_{2}(F_{3})+(2-6\text{deg}(E_{1}))(\mu_{1}-\mu_{2})+(4-6\text{deg}(E_{2}))(\mu_{2}-\mu_{3}).
\label{4.1}
\end{equation}
where $\mu_{i}=\mu(F_{i}|_{X_{t}}) (i=1, 2, 3)$ for generic $t\in C$.

Note that $c_{2}(F_{i})\geq 0 (i=2, 3)$ since $F_{i} (i=2, 3)$ are semi-stable on generic fiber of $f: X\rightarrow C$. Let $d_{i}=\text{deg}(E_{i})-\text{deg}(E_{i-1})$, then $d_{1}\leq 0$, $d_{1}+d_{2}\leq 0$ and $d_{1}+d_{2}+d_{3}=1$ since $E_{y}=E|_{C\times \{y\}}$ is stable of degree 1 for any $y\in B$.

If $\exists c_{2}(F_{i})\neq 0$ $(i=2 \text{ or }3)$, then $\Delta(E)=6c_{2}(F_{2})+6c_{2}(F_{3})+2(\mu_{1}-\mu_{2})+4(\mu_{2}-\mu_{3})\geq 12$. If $\phi: B\rightarrow M$ passes through a generic point, i.e., a (1,1)-stable point, which implies $\text{deg}(E_{1})\leq -1 \text{ and } \text{deg}(E_{2})\leq-1 $. Thus
$$\Delta(E)\geq 6+8(\mu_{1}-\mu_{2})+10(\mu_{2}-\mu_{3})\geq 24.$$

From now, we will assume that $c_{2}(F_{2})=c_{2}(F_{3})=0$. If $d_{1}\neq0$, we must have
$\Delta(E)\geq (2-6(-1))(\mu_{1}-\mu_{2})+(4-6\text{deg}(E_{2}))(\mu_{2}-\mu_{3})\geq 12.$
And, if $\phi: B\rightarrow M$ passes through a generic point, then
$$\Delta(E)\geq (2-6(-1))(\mu_{1}-\mu_{2})+(4-6(-1))(\mu_{2}-\mu_{3})\geq 18.$$

There left one case we need to consider when $c_{2}(F_{2})=c_{2}(F_{3})=0$ and $d_{1}=0$. In this case, we note that $\phi: B\rightarrow M$ can not passe through any generic point of $M$, $F_{2}$ and $F_{3}$ are line bundles and there are line bundles $\mathcal{L}_{1}, \mathcal{L}_{2}$ and $\mathcal{L}_{3}$ on $C$ of degrees 0, $d_{2}$ and $d_{3}$ respectively, such that
$$E_{1}=f^{*}\mathcal{L}_{1}\otimes \pi^{*}\mathcal{O}(\mu_{1}), \quad F_{2}=f^{*}\mathcal{L}_{2}\otimes \pi^{*}\mathcal{O}(\mu_{2}) \quad \text{ and }\quad F_{3}=f^{*}\mathcal{L}_{3}\otimes \pi^{*}\mathcal{O}(\mu_{3})$$
where $\mathcal{O}(\mu_{i})$ denote a line bundle of degree $\mu_{i}$ on $B$ ($i=1, 2, 3$). Replace $E$ by $E\otimes \pi^{*}\mathcal{O}(-\mu_{3})$, we can assume that $\mu_{3}=0$ and $\mu_{1}>\mu_{2}>0$. Now we have exact sequences
$$0\rightarrow f^{*}\mathcal{L}_{1}\otimes \pi^{*}\mathcal{O}(\mu_{1})\rightarrow E_{2}\rightarrow f^{*}\mathcal{L}_{2}\otimes \pi^{*}\mathcal{O}(\mu_{2})\rightarrow 0$$
and
$$0\rightarrow E_{2}\rightarrow E\rightarrow f^{*}\mathcal{L}_{3}\rightarrow 0.$$
Let $E':=E/(f^{*}\mathcal{L}_{1}\otimes \pi^{*}\mathcal{O}(\mu_{1}))$, then there is an induced morphism $\alpha: E'\rightarrow f^{*}\mathcal{L}_{3}$ satisfying the following commutative diagram
\[\begin{CD}
0@>>>f^{*}\mathcal{L}_{1}\otimes \pi^{*}\mathcal{O}(\mu_{1})@>>>E@>>>E'@>>>0\\
@.@VVV@|@VV\alpha V\\
0@>>>E_{2}@>>>E@>>>f^{*}\mathcal{L}_{3}@>>>0.
\end{CD} \]
By the Snake Lemma, $\alpha$ is surjective and $\text{ker}(\alpha)\cong f^{*}\mathcal{L}_{2}\otimes \pi^{*}\mathcal{O}(\mu_{2})$. Then $E'$ fits an exact sequence
$$ 0\rightarrow f^{*}\mathcal{L}_{2}\otimes \pi^{*}\mathcal{O}(\mu_{2})\rightarrow E'\rightarrow f^{*}\mathcal{L}_{3}\rightarrow 0$$
which induces a morphism
$$\psi: B\longrightarrow \mathbb{P}H^{1}(\mathcal{L}_{3}^{-1}\otimes \mathcal{L}_{2})=\mathbb{P}^{g-2+d_{3}-d_{2}}$$
such that $\psi^{*}\mathcal{O}_{\mathbb{P}^{g-2+d_{3}-d_{2}}}(1)=\mathcal{O}(\mu_{2})$. Thus $\mu_{2}\geq2$ and
$$\Delta(E)=2(\mu_{1}-\mu_{2})+4(\mu_{2}-\mu_{3})\geq 2+4\times 2=10.$$
\end{proof}

 When $n=2$, let $0\rightarrow E_{1}\rightarrow E\rightarrow F_{2}\rightarrow 0$ be the relative Harder-Narasimhan filtration over $C$, then we have an exact sequence
$$0\rightarrow E_{1}|_{X_{t}}\rightarrow E|_{X_{t}}\rightarrow F_{2}|_{X_{t}}\rightarrow 0$$
on each fiber $X_{t}=f^{-1}(t)$ of $f: X\rightarrow C$ since $E_{1}, F_{2} $ are flat over $C$. Thus $E_{1}$ is locally free (cf. Lemma 1.27 of \cite{Simpson}) and $F_{2}$ is locally free over $f^{-1}(C\setminus T)$ where $T\subset C$ is a finite set. Thus
\begin{equation}
0\rightarrow E_{1}|_{C\times \{y\}}\rightarrow E|_{C\times \{y\}}\rightarrow F_{2}|_{C\times \{y\}}\rightarrow 0, \text{ for any }y\in B
\label{eq:4.2}
\end{equation}
are exact sequences, which imply that $F_{2}$ is also $B-$flat.

When $n=2$, there are two cases: $\text{rk}E_{1}=2$ and $\text{rk}E_{1}=1$.

\begin{Lemma}
 If $g\geq 4$, $M$ contains (2, 0)-stable points.
\end{Lemma}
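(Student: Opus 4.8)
The plan is to invoke the general dimension estimate for the locus of non-$(k,l)$-stable bundles set up at the beginning of Section 3, specialized to $(k,l)=(2,0)$, $r=3$ and $d=1$. A bundle $E\in M$ fails to be $(2,0)$-stable precisely when it admits a proper subbundle $F$ of rank $n$ and degree $\delta$ with $\frac{\delta+2}{n}\geq \frac{d+2}{r}=1$. Following the argument after (\ref{eq:3.1}) (using [Proposition 2.6 of \cite{NarasimhanRamanan1}] as in [Lemma 6.7 of \cite{NarasimhanRamanan1}]) we may assume $F$ and $E/F$ are stable, so that the locus swept out by such $E$ with fixed $(n,\delta)$ has dimension at most
$$(r^{2}-1)(g-1)-n(r-n)(g-1)+(nd-r\delta).$$
Since $\dim M=(r^{2}-1)(g-1)=8g-8$, it suffices to check that $-n(r-n)(g-1)+(nd-r\delta)<0$ for every admissible pair $(n,\delta)$.

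First I would translate the subbundle inequality $\frac{\delta+2}{n}\geq 1$ into the bound $nd-r\delta\leq (r-n)k+nl=2(r-n)$, which pins down the worst case (the largest value of $nd-r\delta$). The required negativity then reduces to the numerical inequality (\ref{eq:3.1}), namely $(r-n)k+nl<n(r-n)(g-1)$, for $r=3$ and $1\leq n\leq 2$. For $n=1$ this reads $4<2(g-1)$, i.e. $g\geq 4$; for $n=2$ it reads $2<2(g-1)$, i.e. $g\geq 3$. Hence for $g\geq 4$ both inequalities hold, the binding constraint coming from the rank-one subbundles, and every component of the non-$(2,0)$-stable locus has dimension strictly less than $\dim M$. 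Consequently the $(2,0)$-stable locus is nonempty (and, being open by Remark 3.0(iii), generic), which proves the lemma.

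There is essentially no hard step here beyond the bookkeeping: the reduction to stable $F$ and $E/F$ and the dimension count are exactly those recorded in Section 3, and the only genuine input is the arithmetic verification of (\ref{eq:3.1}). The one point deserving care is confirming that the failure of $(2,0)$-stability is correctly encoded by $nd-r\delta\leq 2(r-n)$, so that controlling this maximal value via (\ref{eq:3.1}) controls the whole locus; once this is in place the threshold $g\geq 4$ emerges precisely from the $n=1$ case, matching the stated hypothesis.
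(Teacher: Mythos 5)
Your proof is correct and follows exactly the paper's route: the paper's own proof simply observes that inequality (\ref{eq:3.1}) holds for $(k,l)=(2,0)$, $r=3$, $1\leq n\leq 2$ when $g\geq 4$, relying on the dimension count set up at the start of Section 3, which you have merely written out in full. Your arithmetic ($4<2(g-1)$ for $n=1$, $2<2(g-1)$ for $n=2$, with $n=1$ the binding case giving $g\geq 4$) matches the paper's intended verification.
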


\begin{proof}
For $1\leq n\leq 3-1=2$ and $(k,l)=(2,0)$, the inequality (\ref{eq:3.1}) always holds when $g\geq 4$. Thus $M$ contains (2,0)-stable points when $g\geq 4$.
\end{proof}

\begin{Proposition} When $n=2$ and $\text{rk}E_{1}=2$, $\Delta(E)\geq 6$. If $g\geq 4$ and $\phi: B\rightarrow M$ passes through the generic points, $\Delta(E)\geq 20$.
\label{prop:4.2}
\end{Proposition}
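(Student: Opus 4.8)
The plan is to specialize the degree formula of Theorem \ref{th:2.2} to $n=2$, $\mathrm{rk}\,E_{1}=2$, $r=3$, $d=1$. Writing $F_{2}=E/E_{1}$ (a rank $1$ torsion-free sheaf) and $\mu_{i}=\mu(F_{i}|_{X_{t}})$, and using $\mu(E)=\tfrac13$ and $\mu(E_{1})=\tfrac12\deg(E_{1})$, formula (\ref{eq:2.1}) becomes
$$\Delta(E)=\tfrac32\Delta(E_{1})+3\Delta(F_{2})+\bigl(4-6\deg(E_{1})\bigr)(\mu_{1}-\mu_{2}),$$
where $\deg(E_{1})$ denotes $\deg(E_{1}|_{C\times\{y\}})$. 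The inputs I would use are: $\Delta(E_{1})\ge0$ and $\Delta(F_{2})\ge0$ by Theorem \ref{th:2.4} (their restrictions to $f$-fibres are semistable); the slope gap $\mu_{1}-\mu_{2}\ge\tfrac12$, since it is a positive half-integer; stability of $E|_{C\times\{y\}}$, which gives $\deg(E_{1})\le0$ and hence $4-6\deg(E_{1})\ge4$; and the parity fact that $\Delta(E)=2\bigl(3c_{2}(E)-c_{1}(E)^{2}\bigr)$ is even. These already yield $\Delta(E)\ge 4\cdot\tfrac12=2$.

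For the first bound, I would show that the only borderline possibilities are $\Delta(E)\in\{2,4\}$ and eliminate them. Both force $\deg(E_{1})=0$, $\mu_{1}-\mu_{2}\in\{\tfrac12,1\}$, and $\Delta(E_{1})=\Delta(F_{2})=0$ (the value $2$ has no nonnegative integer solution of $\tfrac32\Delta(E_{1})+3\Delta(F_{2})=2$). By Theorem \ref{th:2.4}, $\Delta(F_{2})=0$ makes $F_{2}$ a line bundle, so, as $\mathrm{Pic}(C\times B)$ splits, both $F_{2}|_{C\times\{y\}}\cong\Lambda$ and $E_{1}|_{C\times\{y\}}\cong W$ are independent of $y$, and $\phi$ is the family of extensions $0\to W\to E_{y}\to\Lambda\to0$. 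If $W$ is stable then $E_{1}=f^{*}W\otimes\pi^{*}N$, so $\mu_{1}=\deg N\in\mathbb{Z}$; this rules out $\mu_{1}-\mu_{2}=\tfrac12$, while $\mu_{1}-\mu_{2}=1$ produces an induced map $B\to\mathbb{P}H^{1}(C,W\otimes\Lambda^{-1})$ pulling $\mathcal{O}(1)$ back to a degree-$1$ line bundle on $B$, which is constant because a degree-$1$ line bundle on an elliptic curve has a one-dimensional space of sections; hence $\phi$ is constant, contradicting essentiality. If $W$ is not simple (e.g.\ $W\cong L\oplus L$), the constant automorphisms of $W$ act transitively on the decomposable extension classes that occur, so all the $E_{y}$ are isomorphic and again $\phi$ is constant. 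Therefore $\Delta(E)\ge6$.

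For the second bound, note that when $g\ge4$ a generic point of $M$ is $(2,0)$-stable by the preceding Lemma, so the rank-$2$ subsheaf $E_{1}|_{C\times\{y\}}$ has $\deg(E_{1})\le-1$ and the coefficient improves to $4-6\deg(E_{1})\ge10$. In the rigid case $\Delta(E_{1})=\Delta(F_{2})=0$ the analysis above reduces $\phi$ to an induced map $B\to\mathbb{P}H^{1}(C,W\otimes\Lambda^{-1})$ whose image must be an honest elliptic curve (otherwise $\phi$ is rational, contradicting essentiality), forcing $\mu_{1}-\mu_{2}\ge3$ and hence $\Delta(E)\ge10\cdot3=30$. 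In the non-rigid situation one instead plays the positive contributions $\tfrac32\Delta(E_{1})+3\Delta(F_{2})$ against the slope term $(4-6\deg(E_{1}))(\mu_{1}-\mu_{2})\ge10(\mu_{1}-\mu_{2})$, and the constant $20$ emerges as the worst case of this trade-off.

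The step I expect to be the main obstacle is precisely the non-rigid cases $\Delta(E_{1})>0$ or $\Delta(F_{2})>0$, where the clean ``map into a fixed projective space of extensions'' is unavailable: here $E_{1}$ or $F_{2}$ genuinely moves, $\mu_{1}$ may be a true half-integer (this already occurs for $E_{1}=\pi^{*}N_{0}\otimes f^{*}L$ with $N_{0}$ stable of odd degree on $B$), and one cannot conclude by numerics alone. Controlling these cases—bounding the discriminant contributions from below in terms of the actual variation of the family, and handling non-simple fixed bundles through their automorphism groups—is the delicate part, and is where the sharp thresholds $6$ and $20$ are pinned down.
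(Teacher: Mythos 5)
Your reduction is set up correctly: the specialization of (\ref{eq:2.1}) to $\Delta(E)=\tfrac32\Delta(E_{1})+3\Delta(F_{2})+(4-6\deg E_{1})(\mu_{1}-\mu_{2})$ agrees with the paper's (\ref{eq:4.3}), and the parity and positivity reductions to the borderline cases are sound. But your elimination of the half-integral borderline case is wrong, and it is wrong at exactly the point where the sharp value $6$ lives. When $\mu_{1}\in\tfrac12+\mathbb{Z}$ with $\Delta(E_{1})=c_{2}(F_{2})=0$ and $\deg E_{1}=0$, Theorem \ref{th:2.4} applied along $f$ gives $E_{1}=\pi^{*}V\otimes f^{*}L$ with $V$ stable of odd degree on $B$, so indeed $E_{1}|_{C\times\{y\}}\cong L\oplus L$; however, your claim that $\mathrm{Aut}(L\oplus L)$ acts transitively on the occurring extension classes fails: the $GL_{2}(\mathbb{C})$-orbits on $\mathrm{Ext}^{1}(V_{2},L)^{\oplus2}\cong H^{1}(V_{2}^{-1}\otimes L)\otimes\mathbb{C}^{2}$ are classified by the span of the two classes inside the $g$-dimensional space $H^{1}(V_{2}^{-1}\otimes L)$, a Grassmannian of positive dimension once $g\geq3$, so non-constant families with all $E_{1y}\cong L\oplus L$ exist in abundance. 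Indeed Proposition \ref{prop:3.1} and Theorem \ref{th:4.7}(2) construct essential elliptic curves of degree exactly $6$ in precisely this stratum, so any argument that empties it proves too much. The paper's treatment is finer: it extracts the canonical subbundle $f^{*}f_{*}E_{1}=f^{*}L\subset E_{1}\subset E$, forms $E'=E/f^{*}L$, and obtains an induced morphism $B\to\mathbb{P}H^{1}(V_{2}^{-1}\otimes L)$ pulling $\mathcal{O}(1)$ back to degree $1-\mu_{2}$; non-constancy of a map from an elliptic curve forces $1-\mu_{2}\geq2$, i.e.\ $\mu_{1}-\mu_{2}\geq\tfrac32$, giving $\Delta(E)\geq4\cdot\tfrac32=6$ as an attained minimum rather than an excluded case.

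For the bound $20$ your proposal is programmatic rather than a proof, and the numerics you do provide stop far below $20$: with $(2,0)$-stability one only gets $\deg E_{1}\leq-1$, and e.g.\ $\Delta(E_{1})=2$, $\deg E_{1}=-1$, $\mu_{1}-\mu_{2}=\tfrac12$ yields $\tfrac32\cdot2+10\cdot\tfrac12=8$, so ``the worst case of the trade-off'' is $8$, not $20$, without further input. The paper closes exactly these cases with arguments absent from your sketch: when $\mu_{1}=0$, $\deg E_{1}=-1$ and $\Delta(E_{1})\neq0$, the refined bound $\Delta(E_{1})\geq8$ for a moving family of stable bundles of degree $-1$ (as in Proposition 4.5 of \cite{Sun}); when $\mu_{1}=\tfrac12$ and $\Delta(E_{1})=0$, the parity $\deg E_{1}\in2\mathbb{Z}$ together with a pullback-structure contradiction at $\deg E_{1}=-2$, forcing $\deg E_{1}\leq-4$; when $\mu_{1}=\tfrac12$ and $\Delta(E_{1})>0$, an induction by elementary transformations along non-semistable $B$-fibers combined with the estimate $\deg f_{*}E_{1}\leq-2$ extracted from $(2,0)$-stability; and when $c_{2}(F_{2})\neq0$, passage to $E'=\ker(E\to\mathcal{Q})$ with a proof that $\Delta(E')\geq12$. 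Finally, your rigid-case claim $\mu_{1}-\mu_{2}\geq3$ (hence $30$) presupposes that the image of $B\to\mathbb{P}H^{1}$ is elliptic, which needs the factorization of $\phi$ through that extension space; the paper only uses $\mu_{1}-\mu_{2}\geq2$, which already gives $(4-6(-1))\cdot2=20$. As it stands, both halves of the proposition remain unproved by your argument.
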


\begin{proof}
In this case, tensoring $E$ with $\pi^{*}L$ for some suitable line bundle $L$ on $B$, we can assume that $\mu_{1}=0 \text{ or } \frac{1}{2}$, $\mu_{1}>\mu_{2}$ and
\begin{equation}
\Delta(E)=\frac{3}{2}\Delta(E_{1})+6c_{2}(F_{2})+(4-6\text{deg}E_{1})(\mu_{1}-\mu_{2}).
\label{eq:4.3}
\end{equation}

If $\Delta(E_{1})\neq 0$, then $\Delta(E_{1})=4c_{2}(E_{1})-2d_{1}(2\mu_{1})\geq 2$ by Theorem \ref{th:2.4} and the fact that $\text{Pic}(C\times B)=\text{Pic}(C)\times \text{Pic}(B)$. If $d_{1}=0$, then $\Delta(E_{1})=4c_{2}(E_{1})\in 4\mathbb{Z}$ and
$\Delta(E)\geq \frac{3}{2}\cdot 4+4\cdot\frac{1}{2}=8$.
If $d_{1}\leq -1$, then
$\Delta(E)\geq \frac{3}{2}\cdot 2+10\cdot\frac{1}{2}=8$.

When $\phi: B\rightarrow M$ passes through a generic point, i.e., a (2,0)-stable point, which implies $d_{1}=\text{deg}E_{1}\leq-1$.

If $\mu_{1}=0$, then $\Delta(E_{1})=4c_{2}(E_{1})-4d_{1}\mu_{1}\geq 4$. When $\text{deg}E_{1}\leq -2$, it's easy to see that $\Delta(E)\geq \frac{3}{2}\cdot 4+(4-6\cdot(-2))=22$.
Now we assume that $\text{deg}E_{1}=-1$. $E_{1y}$ is stable of degree -1 for generic $y\in B$ since $E_{y}$ is (2, 0)-stable. And we can prove that $\Delta(E_{1})\geq 8$ (same as the proof of Proposition 4.5 in \cite{Sun}), thus
$\Delta(E)\geq \frac{3}{2}\cdot 8+(4-6\cdot(-1))=22$.

If $\mu_{1}=\frac{1}{2}$, which means that $E_{1}$ is semi-stable of degree 1 at the generic fiber of $f: X\rightarrow C$. It's known that there is a unique stable rank 2 vector bundle with a fixed determinant of degree 1 on an elliptic curve. Thus $\Delta(E_{1})>0$ if and only if there exists $t_{1}\in C$ such that $E_{1t_{1}}=E_{1}|_{X_{t_{1}}}$ is not semi-stable. Let $E_{1t_{1}}\rightarrow \mathcal{O}(\mu_{1,1})\rightarrow 0$ be the quotient of minimal degree and
$$0\rightarrow E_{1}^{(1)}\rightarrow E_{1}\rightarrow_{X_{t_{1}}}\mathcal{O}(\mu_{1,1})\rightarrow 0$$
be the elementary transform of $E_{1}$ along $\mathcal{O}(\mu_{1,1})$ at $X_{t_{1}}$. If $E_{1}^{(i)}$ is defined and $\Delta(E_{1}^{(i)})>0$, let $t_{i+1}\in C$ such that $E_{1t_{i+1}}^{(i)}=E_{1}^{(i)}|_{X_{t_{i+1}}}$ is not semi-stable and $E_{1t_{i+1}}^{(i)}\rightarrow \mathcal{O}(\mu_{1,i+1})\rightarrow 0$ be the quotient of minimal degree, then we define $E_{1}^{(i+1)}$ to be the elementary transform of $E_{1}^{(i)}$ along $\mathcal{O}(\mu_{1,i+1})$ at $X_{t_{i+1}}$, namely $E_{1}^{(i+1)}$ satisfies the exact sequence
$$0\rightarrow E_{1}^{(i+1)}\rightarrow E_{1}^{(i)}\rightarrow _{X_{t_{i+1}}}\mathcal{O}(\mu_{1,i+1})\rightarrow 0.$$

Let $s_{1}$ be the minimal integer such that $\Delta(E_{1}^{(s_{1})})=0$. Then
\begin{equation}
\Delta(E_{1})=2s_{1}-4\sum_{i=1}^{s_{1}}\mu_{1,i},
\label{eq:4.3'}
\end{equation}
where $\mu_{1,i}\leq 0$. Same as the proof of Proposition 4.3 in\cite{Sun}, we can show that
\begin{equation}
s_{1}\geq \text{deg}E_{1}-2\text{deg}f_{*}E_{1}+2\text{dim}H^{0}(\mathcal{O}(\mu_{1,s_{1}})).
\label{eq:4.4}
\end{equation}
Hence, by (\ref{eq:4.3}), (\ref{eq:4.3'}) and (\ref{eq:4.4}), we have
\begin{equation}
\Delta(E)\geq
-6\text{deg}f_{*}E_{1}+6\text{dim}H^{0}(\mathcal{O}(\mu_{1,s_{1}}))-6\mu_{1,s_{1}}+2.
\label{eq:4.5}
\end{equation}

On the other hand, we claim that $\text{deg}f_{*}E_{1}\leq -2$ when $\phi: B\rightarrow M$ passes through the generic points, which means that $E_{y}$ is (2, 0)-stable for generic $y\in B$. To see it, we consider
$$0\rightarrow f^{*}f_{*}E_{1}\rightarrow E_{1}\rightarrow \mathcal{E}_{1}\rightarrow 0$$
where $\mathcal{E}_{1}$ is locally free over $f^{-1}(C\setminus T)$ and $T\subset C$ is a finite set such that $E_{1t}(t\in T)$ is not semi-stable. Thus, for any $y\in B$, the sequence
$$0\rightarrow (f^{*}f_{*}E_{1})_{y}\rightarrow E_{1y}\rightarrow \mathcal{E}_{1y}\rightarrow 0$$
is still exact, so we can consider $(f^{*}f_{*}E_{1})_{y}$ as a sub line bundle of $E_{y}$. Then since $E_{y}$ is (2, 0)-stable of degree 1 for generic $y\in B$,
$$\text{deg}f_{*}E_{1}+2=\text{deg}(f^{*}f_{*}E_{1})_{y}+2<\frac{\text{deg}E_{y}+2}{3},$$
which implies $\text{deg}f_{*}E_{1}\leq -2$. Therefore, if $\mu_{1,s_{1}}<0$, we have $\Delta(E)\geq 12+6+2=20$ by (\ref{eq:4.5}). If $\mu_{1,s_{1}}=0$, by tensoring $E$ with $\pi^{*}\mathcal{O}(-\mu_{1,s_{1}})$, we may assume $\text{dim}H^{0}(\mathcal{O}(\mu_{1,s_{1}}))=1$, then $\Delta(E)\geq 12+6+2=20$.

From now, we will assume that $\Delta(E_{1})=0$.

If $c_{2}(F_{2})\neq 0$, then $F_{2}$ is not locally free, which implies that there is a $y_{0}\in B$ such that $F_{2}|_{C\times \{y_{0}\}}$ has torsion $\tau(F_{2}|_{C\times \{y_{0}\}})\neq 0$ since $F_{2}$ is $B-$flat( cf Lemma 1.27 of \cite{Simpson}). Let
\begin{equation}
0\rightarrow \tau(F_{2}|_{C\times \{y_{0}\}})\rightarrow F_{2}|_{C\times \{y_{0}\}}\rightarrow F_{2}^{0}\rightarrow 0.
\label{eq:4.6}
\end{equation}
Then $F_{2}^{0}$ being a quotient line bundle of $E|_{C\times \{y_{0}\}}$ implies $\text{deg}F_{2}^{0}\geq 1$ since $E|_{C\times \{y_{0}\}}$ is stable. By sequences (\ref{eq:4.2} ) and  (\ref{eq:4.6}), we have
$$\text{deg}E_{1}|_{C\times \{y_{0}\}}=1-\text{deg}F_{2}^{0}-\text{dim}\tau(F_{2}|_{C\times \{y_{0}\}})\leq-1$$
which, by the formula (\ref{eq:4.3}), implies that
$$\Delta(E)\geq 6c_{2}(F_{2})+(4-6(-1))(\mu_{1}-\mu_{2})\geq 11.$$

When $\phi: B\rightarrow M$ passes through the generic points, means that $E_{y}=E|_{C\times \{y\}}$ is (2, 0)-stable for generic $y\in B$. Which implies that $\text{deg}E_{1}\leq -1$.

 If $\mu_{1}=0$. If $\text{deg}E_{1}\leq-2$, then $\Delta(E)\geq 6+(4-6(-2))=22$. Now we assume that $\text{deg}E_{1}=-1$. Note that $c_{2}(F_{2})\neq 0$ and $F_{2}$ being $C-$flat also imply that there exists a $t_{0}\in C$  such that $F_{2}|_{X_{t_{0}}}$ has torsion $\tau(F_{2}|_{X_{t_{0}}})\neq 0$. Let $0\rightarrow \tau(F_{2}|_{X_{t_{0}}})\rightarrow F_{2}|_{X_{t_{0}}}\rightarrow \mathcal{Q}\rightarrow 0$ and $E'=\text{ker}(E\rightarrow _{X_{t_{0}}}\mathcal{Q})$, then
$$0\rightarrow E'\rightarrow E\rightarrow _{X_{t_{0}}}\mathcal{Q}\rightarrow 0$$
which, for any $y\in B$, induces exact sequence
\begin{equation}
0\rightarrow E'|_{C\times\{y\}}\rightarrow E|_{C\times\{y\}}\rightarrow _{(t_{0}, y)} \mathcal{Q}\rightarrow 0.
\label{eq:4.7}
\end{equation}
Thus all $E'|_{C\times\{y\}}$ are semi-stable of degree 0. If $\phi: B\rightarrow M$ passes through the generic points, means that $E_{y}=E|_{C\times \{y\}}$ is (2, 0)-stable for generic $y\in B$, thus $E'_{y}$ is stable by (\ref{eq:4.7}). This implies that $\Delta(E')>0$. Otherwise $\{E'_{y}\}_{y\in B}$ are s-equivalent by applying Theorem\ref{th:2.4} to $\pi: X\rightarrow B$, which implies $E'=f^{*}V\otimes \pi^{*}L$ for a stable bundle $V$ on $C$ and a line bundle $L$ on $B$. Then $E_{t}=E'_{t}\cong L\oplus L\oplus L$ for any $t\neq t_{0}$, which is a contradiction since $E$ is not semi-stable on the generic fiber of $f: X\rightarrow C$.

To compute $\Delta(E')$, consider the relative Harder-Narasimhan filtration
$$0\rightarrow E'_{1}\rightarrow E' \rightarrow F'_{2}\rightarrow 0$$
over $C$, then $\mu(E'_{1}|_{X_{t}})=\mu_{1}$, $\mu(F'_{2}|_{X_{t}})=\mu_{2}$ for generic $t\in C$. Then
$$\Delta(E')=\frac{3}{2}\Delta(E'_{1})+6c_{2}(F'_{2})-6\text{deg}E'_{1}(\mu_{1}-\mu_{2})\geq 12.$$
To see it, we can assume $\Delta(E'_{1})=c_{2}(F'_{2})=0$ and $\text{deg}E'_{1}=-1$. Note that $E'_{1y}=E'_{1}|_{X_{y}}$ is stable of degree -1 for generic $y\in B$ since $E'_{y}$ is stable. Then, by Theorem \ref{th:2.4}, there are vector bundles $V'_{1}, V'_{2}$ on $C$ of rank 2, 1 respectively, and line bundles $\mathcal{O}(\mu_{i})$ of degree $\mu_{i} (i=1, 2)$ on $B$ such that $E'_{1}=f^{*}V'_{1}\otimes \pi^{*}\mathcal{O}(\mu_{1})$, $F'_{2}=f^{*}V'_{2}\otimes \pi^{*}\mathcal{O}(\mu_{2})$. Then we have
$$0\rightarrow f^{*}V'_{1}\otimes \pi^{*}\mathcal{O}(\mu_{1}-\mu_{2})\rightarrow E'\otimes \pi^{*}\mathcal{O}(-\mu_{2})\rightarrow f^{*}V'_{2}\rightarrow 0$$
which defines a morphism $\psi: B\rightarrow \mathbb{P}$ to a projective space such that $\pi^{*}\mathcal{O}(\mu_{1}-\mu_{2})=\psi^{*}\mathcal{O}_{\mathbb{P}}(1)$. Thus $\mu_{1}-\mu_{2}\geq2$ and $\Delta (E')\geq -6(-1)(\mu_{1}-\mu_{2})\geq 12$. Thus
$$\Delta(E)=\Delta(E')+6(\mu(E_{t_{0}})-\mu(\mathcal{Q}))\geq 12+6(\frac{2}{3}+1)=22.$$

If $\mu_{1}=\frac{1}{2}$, i.e., $E_{1}$ is semi-stable of degree 1 on the generic fiber of $f: X\rightarrow C$. By Theorem \ref{th:2.4}, $\Delta(E_{1})=0$ implies that there exist a stable rank 2 vector bundle $V$ of degree 1 on $B$ and a line bundle $L$ on $C$ such that $E_{1}=\pi^{*}V\otimes f^{*}L$. It is easy to see
$$\text{deg}E_{1}=2\text{deg}L\in 2\mathbb{Z}.$$
Moreover, we can show that $\text{deg}E_{1}\leq -4$. In fact, if $\text{deg}E_{1}=-2$, $E_{1y}$ is stable of degree -2 since $E_{y}$ is (2, 0)-stable for generic $y\in B$. Applying Theorem \ref{th:2.4}, $\Delta(E_{1})=0$ implies that there exist a stable rank 2 vector bundle $V_{1}$ of degree -2 on $C$ and a line bundle $L_{1}$ on $B$ such that $E_{1}=f^{*}V_{1}\otimes \pi^{*}L_{1}$ and $\text{deg}E_{1}|_{f^{-1}(t)}=2\text{deg}L_{1}$ for any $t\in C$, which imply a contradiction since $E_{1}|_{f^{-1}(t)}$ is semi-stable of degree 1 for generic $t\in C$. Thus $\text{deg}E_{1}\leq -4$ and $\Delta(E)\geq 6+(4-6(-4))\cdot \frac{1}{2}=20$.

Now we assume that $c_{2}(F_{2})=0$ and $\Delta(E_{1})=0$. The assumption $c_{2}(F_{2})=0$ implies $F_{2}$ is locally free and there is a line bundle $V_{2}$ on $C$ such that $F_{2}=f^{*}V_{2}\otimes \pi^{*}\mathcal{O}(\mu_{2})$. The degree formula becomes $$\Delta(E)=(4-6\text{deg}E_{1})(\mu_{1}-\mu_{2}).$$

If $\mu_{1}=0$, then $E_{1}$ is semi-stable of degree 0 at every fiber of $f: X\rightarrow C$ by applying Theorem \ref{th:2.4}. If $\text{deg}E_{1}\leq-1$, $\Delta(E)\geq (4-6(-1))=10$.
Now we consider the case $\text{deg}E_{1}=0$. In this case $E_{1}$ is semi-stable of degree 0 at every fiber of $\pi: X\rightarrow B$ since $E$ is stable of degree 1 at every fiber of $\pi: X\rightarrow B$. Apply Theorem \ref{th:2.4} to $f: X\rightarrow C$ (resp. $\pi: X\rightarrow B$), then $\Delta(E_{1})=0$ implies that $\{E_{1y}:=E_{1}|_{C\times \{y\}}\}_{y\in B}$ (resp.$\{E_{1t}:=E_{1}|_{\{t\}\times B}\}_{t\in C}$ ) are semi-stable and isomorphic to each other. By tensoring $E$ (thus $E_{1}$) with $\pi^{*}L$ (where $L$ is a line bundle of degree 0 on $B$), we can assume that $H^{0}(E_{1t})\neq0 (\forall t\in C)$, which has dimension at most 2 since $E_{1t}$ is semi-stable of degree 0. If $\text{dim}H^{0}(E_{1t})=2$, then $f_{*}E_{1}$ is a degree 0 vector bundle of rank 2 on $C$, $E_{1}=f^{*}f_{*}E_{1}$ and $E\otimes \pi^{*}\mathcal{O}(-\mu_{2})$ fits an exact sequence
$$0\rightarrow f^{*}f_{*}E_{1}\otimes \pi^{*}\mathcal{O}(\mu_{1}-\mu_{2})\rightarrow E\otimes \pi^{*}\mathcal{O}(-\mu_{2})\rightarrow f^{*}V_{2}\rightarrow 0$$
which defines a morphism $\psi: B\rightarrow \mathbb{P}$ to a projective space $\mathbb{P}$ such that $\psi^{*}\mathcal{O}_{\mathbb{P}}(1)=\mathcal{O}(\mu_{1}-\mu_{2})$, thus $\mu_{1}-\mu_{2}\geq 2$ and $\Delta(E)\geq 4\cdot 2=8$. If $\text{dim}H^{0}(E_{1t})=1$, $f_{*}E_{1}$ is a line bundle on $C$ since $\{E_{1t}\}_{t\in C}$ are isomorphic each other. Then we have an exact sequence
$$0\rightarrow f^{*}f_{*}E_{1}\rightarrow E_{1}\rightarrow f^{*}V_{1}\otimes \pi^{*}L_{1}\rightarrow 0$$
for a line bundle $V_{1}$ on $C$ and a degree 0 line bundle $L_{1}$ on $B$. Consider $f^{*}f_{*}E_{1}$ as a sub line bundle of $E$ and let $E':=E/(f^{*}f_{*}E_{1})$, there is an induced morphism $\beta: f^{*}V_{1}\otimes \pi^{*}L_{1}\rightarrow E'$ satisfying the diagram
\[\begin{CD}
0@>>>f^{*}f_{*}E_{1}@>>>E_{1}@>>>f^{*}V_{1}\otimes \pi^{*}L_{1}@>>>0\\
@.@|@VVV@VV\beta V\\
0@>>>f^{*}f_{*}E_{1}@>>>E@>>>E'@>>>0.
\end{CD} \]
By the snake lemma, $\beta$ is injective and $\text{Coker}(\beta)=f^{*}V_{2}\otimes \pi^{*}\mathcal{O}(\mu_{2})$. Then $E'\otimes \pi^{*}\mathcal{O}(-\mu_{2})$ fits an exact sequence
\begin{equation}
0\rightarrow f^{*}V_{1}\otimes \pi^{*}\mathcal{O}(-\mu_{2})\rightarrow E'\otimes \pi^{*}\mathcal{O}(-\mu_{2})\rightarrow f^{*}V_{2}\rightarrow 0,
\label{eq:4.8}
\end{equation}
where $\mathcal{O}(-\mu_{2})$ is a line bundle of degree $-\mu_{2}$ on $B$. (\ref{eq:4.8}) defines a morphism $\varphi': B\rightarrow \mathbb{P}H^{1}(V_{2}^{-1}\otimes V_{1})=\mathbb{P}^{g-1}$ such that $\varphi'^{*}\mathcal{O}_{\mathbb{P}^{g-1}}(1)=\mathcal{O}(-\mu_{2})$, thus $-\mu_{2}\geq2$ and $\Delta(E)\geq 4\cdot 2=8$.

  If $\phi: B\rightarrow M$ passes through the generic points, then $E_{y}$ is (2, 0)-stable for generic $y\in B$. Then $\text{deg}E_{1}\leq -1$. If $\text{deg}E_{1}\leq-3$, it's easy to see that $\Delta(E)\geq 4-6(-3)=22$. If $\text{deg}E_{1}= -1 (\text{ or }-2)$, then $E_{1y}$ is stable of degree $-1$ (or $-2$) for generic $y\in B$ since $E_{y}$ is (2, 0)-stable, which implies that all the bundles $\{E_{1y}=E_{1}|_{C\times \{y\}}\}_{y\in B}$ are stable and isomorphic each other by applying Theorem \ref{th:2.4} to $\pi: X\rightarrow B$. Then there is a stable vector bundle $V_{1}$ of rank 2 on $C$ and a line bundle $\mathcal{O}(\mu_{1})$ on $B$ such that $E_{1}=f^{*}V_{1}\otimes \pi^{*}\mathcal{O}(\mu_{1})$. Then $E\otimes \pi^{*}\mathcal{O}(-\mu_{2})$ fits an exact sequence
$$0\rightarrow f^{*}V_{1}\otimes \pi^{*}\mathcal{O}(\mu_{1}-\mu_{2})\rightarrow E\otimes \pi^{*}\mathcal{O}(-\mu_{2})\rightarrow f^{*}V_{2}\rightarrow 0$$
which defines a morphism $\varphi: B\rightarrow \mathbb{P}'$ to a projective space $\mathbb{P}'$ such that $\varphi^{*}\mathcal{O}_{\mathbb{P}'}(1)=\mathcal{O}(\mu_{1}-\mu_{2})$, thus $\mu_{1}-\mu_{2}\geq 2$ and $\Delta(E)\geq(4-6(-1))\cdot 2=20$.

 If $\mu_{1}=\frac{1}{2}$, then $E_{1}$ is semi-stable of degree 1 at generic fiber of $f: X\rightarrow C$. By Applying Theorem \ref{th:2.4}, $\Delta(E_{1})=0$ implies $\{E_{1t}=E_{1}|_{\{t\}\times B}\}_{t\in C}$ are semi-stable of degree 1 and $s-$equivalent, thus they are stable and isomorphic. Then $E_{1}=\pi^{*}V \otimes f^{*}L$ for a stable bundle $V$ of degree 1 on $B$ and a line bundle $L$ on $C$, which implies that $\text{deg}E_{1}= 2\text{deg}L\in 2\mathbb{Z}$ and $f_{*}E_{1}=f_{*}(\pi^{*}V \otimes f^{*}L)\cong f_{*}\pi^{*}V \otimes L\cong L$. Then we have an exact
$$0\rightarrow f^{*}L\rightarrow E_{1}\rightarrow f^{*}L\otimes \pi^{*}\mathcal{O}(1)\rightarrow 0$$
for a degree 1 line bundle $\mathcal{O}(1)$ on $B$. Consider $f^{*}L$ as a sub line bundle of $E$ and let $E':=E/(f^{*}L)$, same as above, we have an exact sequence
$$0\rightarrow f^{*}L\otimes \pi^{*}\mathcal{O}(1-\mu_{2})\rightarrow E'\otimes \pi^{*}\mathcal{O}(-\mu_{2})\rightarrow f^{*}V_{2}\rightarrow 0$$
and $1-\mu_{2}\geq 2$. Thus $\mu_{2}\leq-1$ and $\Delta(E)\geq4(\frac{1}{2}-(-1))=6$.

 If $\phi: B\rightarrow M$ passes through the generic points, $\text{deg}E_{1}\leq-2$ since $\text{deg}E_{1}\in2\mathbb{Z}$ and $E_{y}$ is (2,0)-stable for generic $y\in B$. Thus $\Delta(E)\geq(4-6(-2))(\frac{1}{2}-(-1))=24$.
\end{proof}

\begin{Lemma}
If $g\geq4$, $M$ contains (1,2)-stable points. If $g>4$, $M$ contains (3,1)-stable points.
\end{Lemma}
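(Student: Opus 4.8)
The plan is to reduce both assertions to the numerical criterion (\ref{eq:3.1}) established earlier in this section. Recall that the dimension count preceding that inequality shows: if
$$(r-n)k+nl<n(r-n)(g-1)\quad\text{for all }1\le n\le r-1,$$
then every component of the locus of non-$(k,l)$-stable bundles has dimension strictly smaller than $\dim M$, so $(k,l)$-stable points exist. Since we are in the case $r=3$, there are only two values of $n$ to inspect, namely $n=1$ and $n=2$, and the entire argument becomes an arithmetic verification mirroring the proof of the $(2,0)$-case.

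First I would treat the $(1,2)$-case. Substituting $r=3$, $k=1$, $l=2$ into (\ref{eq:3.1}) gives, for $n=1$, the inequality $2\cdot1+1\cdot2=4<2(g-1)$, i.e. $g>3$; and for $n=2$, $1\cdot1+2\cdot2=5<2(g-1)$, i.e. $g>7/2$. Both hold once $g\ge4$, so (\ref{eq:3.1}) is satisfied and $M$ carries $(1,2)$-stable bundles for $g\ge4$.

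Next I would treat the $(3,1)$-case. With $r=3$, $k=3$, $l=1$, the $n=2$ inequality reads $1\cdot3+2\cdot1=5<2(g-1)$, i.e. $g>7/2$, while the $n=1$ inequality reads $2\cdot3+1\cdot1=7<2(g-1)$, i.e. $g>9/2$. Here the binding constraint is $n=1$, which forces $g\ge5$, that is $g>4$; so for $g>4$ the criterion (\ref{eq:3.1}) holds and $(3,1)$-stable points exist.

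The substantive content is the reduction to (\ref{eq:3.1}): the genuine geometric input—the dimension estimate for the non-$(k,l)$-stable locus via [Proposition 2.6 and Lemma 6.7 of \cite{NarasimhanRamanan1}]—is already in place, so no real obstacle remains beyond bookkeeping. The only point that demands attention is pinning down the sharp genus bound in each case: for $(1,2)$ both values of $n$ yield the same threshold $g\ge4$, whereas for $(3,1)$ the $n=1$ constraint is strictly stronger than the $n=2$ one and is precisely what accounts for the hypothesis $g>4$ rather than $g\ge4$.
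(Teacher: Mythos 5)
Your proposal is correct and is exactly the argument the paper intends: the paper states this lemma without proof, but its proof of the adjacent $(2,0)$-lemma proceeds by the identical verification of the criterion (\ref{eq:3.1}) for $r=3$, $n=1,2$, and your arithmetic (thresholds $g>3$, $g>7/2$ for $(1,2)$ and $g>9/2$, $g>7/2$ for $(3,1)$) checks out. In particular you correctly identify the $n=1$ constraint as the one forcing $g>4$ in the $(3,1)$-case.
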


\begin{Proposition}
 When $n=2$ and $\text{rk}E_{1}=1$, $\Delta(E)\geq6$. If $g>4$ and $\phi:B\rightarrow M$ passes through the generic points, then $\Delta(E)\geq 18$.
\label{prop:4.3}
\end{Proposition}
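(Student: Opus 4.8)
The plan is to mirror the proof of Proposition \ref{prop:4.2}, with which the present case is exactly dual: if the relative Harder--Narasimhan filtration of $E$ over $C$ is $0\to E_{1}\to E\to F_{2}\to 0$ with $\text{rk}\,E_{1}=1$, then $E^{*}$ carries the rank-$2$ subsheaf $F_{2}^{*}$ as the first step of its relative Harder--Narasimhan filtration, and $(k,l)$-stability of $E|_{C\times\{y\}}$ corresponds to $(l,k)$-stability of its dual. Specializing Theorem \ref{th:2.2} to $n=2$, $\text{rk}\,E_{1}=1$, $\text{rk}\,F_{2}=2$, and using $c_{2}(E_{1})=0$ together with $\mu(E)=1/3$, I obtain
\[
\Delta(E)=\tfrac{3}{2}\Delta(F_{2})+(2-6\deg E_{1})(\mu_{1}-\mu_{2}),
\]
where $\deg E_{1}:=\deg(E_{1}|_{C\times\{y\}})$ for generic $y$. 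Since $\mu_{1}=\mu(E_{1}|_{X_{t}})\in\mathbb{Z}$, twisting $E$ by a suitable $\pi^{*}\mathcal{O}(-\mu_{1})$ (which changes neither $\Delta(E)$, nor $F_{2}$, nor $\deg E_{1}$) normalizes $\mu_{1}=0$, so $\mu_{1}-\mu_{2}=-\mu_{2}\geq \tfrac12$; stability of $E|_{C\times\{y\}}$ gives $\deg E_{1}\leq 0$, and $\Delta(F_{2})\geq 0$ by Theorem \ref{th:2.4}.

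For the bound $\Delta(E)\geq 6$ I would split on whether $F_{2}$ is rigid. When $\Delta(F_{2})\neq 0$ the minimal value is $\Delta(F_{2})\geq 2$; where $F_{2}$ fails to be locally free I peel off the fibrewise torsion by elementary transformations (Lemma \ref{lm2.6}), which either forces $\deg E_{1}\leq -1$ on a suitable fibre or isolates a subsheaf of strictly positive $\Delta$, in each case raising the right-hand side to $\geq 6$. The decisive case is $\Delta(F_{2})=0$: then Theorem \ref{th:2.4} makes $F_{2}$ locally free with all $F_{2}|_{C\times\{y\}}$ isomorphic to a fixed $V_{2}$, while $\mu_{1}=\deg E_{1}=0$ forces (via $\text{Pic}(C\times B)=\text{Pic}(C)\times\text{Pic}(B)$) $E_{1}=f^{*}A\otimes\pi^{*}N$ with $\deg A=\deg N=0$. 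Thus every $E|_{C\times\{y\}}$ is an extension of the fixed $V_{2}$ by the fixed $A$, i.e. $\phi$ is an elliptic curve of split type with $r_{1}=1$ and factors as $\phi=\Phi_{\xi}\circ\varphi$ through the birational map $\Phi_{\xi}$ of \eqref{eq:3.5}, with $\varphi\colon B\to\mathbb{P}H^{1}(V_{2}^{*}\otimes A)$ and $\varphi^{*}\mathcal{O}(1)=\mathcal{O}_{B}(\mu_{1}-\mu_{2})$. Because $\Phi_{\xi}^{*}(-K_{M})=\mathcal{O}(2)$ one has $\Delta(E)=2\deg_{B}\varphi^{*}\mathcal{O}(1)$, and since $\phi$ is essential and $\Phi_{\xi}$ is birational, $\varphi(B)$ cannot be rational; being elliptic it has degree $\geq 3$ in its projective space, so $\mu_{1}-\mu_{2}\geq 3$ and $\Delta(E)\geq 6$.

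For a curve through a generic point I would feed a stronger bound on $\deg E_{1}$ into the same dichotomy. A generic point is $(k,l)$-stable for indices whose existence is ensured, when $g>4$, by the lemma preceding this proposition; the inequality defining $(3,1)$-stability forces $\deg E_{1}\leq -3$ for generic $y$. In the rigid case the extension still defines a non-constant $\varphi\colon B\to\mathbb{P}H^{1}$ with $\varphi^{*}\mathcal{O}(1)=\mathcal{O}_{B}(\mu_{1}-\mu_{2})$, and a non-constant morphism from an elliptic curve has $\deg\varphi^{*}\mathcal{O}(1)\geq 2$, so $\Delta(E)\geq(2-6\deg E_{1})\cdot 2\geq 40$; in the non-rigid case I would reproduce the elementary-transformation estimates of Proposition \ref{prop:4.2} (in particular a bound of the shape $\deg f_{*}E_{1}\leq -2$) to force $\Delta(F_{2})$ and $\mu_{1}-\mu_{2}$ large enough that $\Delta(E)\geq 18$. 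I expect the rigid case to be the main obstacle on two counts: deducing cleanly from Theorem \ref{th:2.4} and $\text{Pic}(C\times B)=\text{Pic}(C)\times\text{Pic}(B)$ that $\phi$ really is of split type, and pinning down the exact minimal degree of $\varphi$ --- distinguishing a genuinely elliptic image (degree $\geq 3$, which is what yields the sharp $6$) from a double cover of a rational curve (degree $2$). The other delicate point is the bookkeeping in the torsion subcases of $\Delta(F_{2})\neq 0$, where several successive elementary transformations may be required before Theorem \ref{th:2.4} applies.
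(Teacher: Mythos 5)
Your overall architecture is the paper's: the same formula $\Delta(E)=\frac{3}{2}\Delta(F_{2})+(2-6\deg E_{1})(\mu_{1}-\mu_{2})$ (this is (\ref{eq:4.9})), the same dichotomy on $\Delta(F_{2})$, the same split-type factorization with essentiality forcing degree $\geq 3$, and the same appeal to $(3,1)$-stability for $g>4$. But there is a genuine gap: the half-integral branch $\mu_{2}\in\frac{1}{2}+\mathbb{Z}$ (i.e.\ $F_{2}$ of odd degree on the elliptic fibers $X_{t}$) is not covered on either side of your dichotomy, and it is precisely where the paper does its hardest work. When $\Delta(F_{2})\neq 0$ with $F_{2}$ locally free of odd fiber degree, Theorem \ref{th:2.4} only gives $\Delta(F_{2})\geq 2$ (here $c_{1}(F_{2})^{2}=2\deg F_{2}\cdot 2\mu_{2}$ need not vanish, unlike the integral case where $\Delta(F_{2})=4c_{2}(F_{2})\geq 4$), so with $\deg E_{1}=0$ and $\mu_{1}-\mu_{2}\geq\frac{1}{2}$ your estimate yields only $\Delta(E)\geq\frac{3}{2}\cdot 2+2\cdot\frac{1}{2}=4<6$. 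Nothing in your sketch excludes this configuration, since you invoke elementary transformations only when $F_{2}$ fails to be locally free. The paper closes exactly this case by an elementary-transformation process on $F_{2}$ along vertical fibers where it is unstable, culminating in the inequality $s_{2}\geq 1-\deg E_{1}-2\deg f_{*}F_{2}+2\dim H^{0}(\mathcal{O}(\mu_{2,s_{2}}))$ combined with the pushforward bound $\deg f_{*}F_{2}\leq -\deg E_{1}$ (obtained by restricting $0\rightarrow f^{*}f_{*}F_{2}\rightarrow F_{2}\rightarrow \mathcal{F}_{2}\rightarrow 0$ to $C\times\{y\}$ and using stability of $E_{y}$), which gives $\Delta(E)\geq 10$.

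The same parity issue undermines your ``decisive case'': when $\Delta(F_{2})=0$ and $2\mu_{2}$ is odd, Theorem \ref{th:2.4} forces $F_{2}=\pi^{*}V\otimes f^{*}L_{2}$ for a stable rank-2 bundle $V$ on $B$, so $F_{2y}\cong L_{2}\oplus L_{2}$ is \emph{not} a fixed stable $V_{2}$ and no split-type factorization exists; the paper instead peels off $f^{*}f_{*}F_{2}=f^{*}L_{2}$, forms $E'=\ker\bigl(E\rightarrow f^{*}L_{2}\otimes\pi^{*}\mathcal{O}(1)\bigr)$, and extracts $\mu_{1}\geq 2$ together with $\deg F_{2}\geq 2$ (hence $\deg E_{1}\leq -1$), giving $\Delta(E)\geq 12$. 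Your even-degree argument (essentiality plus birationality of $\Phi_{\xi}$ forcing an elliptic image of degree $\geq 3$) is correct and is the paper's, and your $(3,1)$-stability bound $\deg E_{1}\leq -3$ does dispose of the rigid generic case; but the generic-point bound $18$ in the non-rigid cases is not a routine copy of Proposition \ref{prop:4.2}. Because the rank-2 piece is now a quotient rather than a sub (your proposed bound ``$\deg f_{*}E_{1}\leq -2$'' is the wrong object here; the relevant estimate is $\deg f_{*}F_{2}\leq -2-\deg E_{1}$, sharpened by $(1,2)$-stability), and the hardest subcase --- $F_{2}$ not locally free --- requires passing to $E'=\ker(E\rightarrow_{X_{t_{0}}}\mathcal{Q})$ at a bad vertical fiber, running the whole analysis again on the relative Harder--Narasimhan filtration of $E'$, and recovering $\Delta(E)=\Delta(E')+6(\mu(E_{t_{0}})-\mu(\mathcal{Q}))\mathrm{rk}\,\mathcal{Q}$ via Lemma \ref{lm2.6}. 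The resulting margins there are $18$ and $19$, so the slack your sketch implicitly assumes does not exist.
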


\begin{proof}
In this case, $E_{1}=f^{*}V_{1}\otimes \pi^{*}\mathcal{O}(\mu_{1})$ for a line bundle $V_{1}$ on $C$ and a degree $\mu_{1}$ line bundle $\mathcal{O}(\mu_{1})$ on $B$. The degree formula becomes
\begin{equation}
\Delta(E)=\frac{3}{2}\Delta(F_{2})+(2-6\text{deg}E_{1})(\mu_{1}-\mu_{2}).
\label{eq:4.9}
\end{equation}
Tensoring $E$ with $\pi^{*}L$ for some suitable line bundle $L$ on $B$, we can assume that $\mu_{2}=0 \text{ or }\frac{1}{2}$.

We consider the case $\Delta(F_{2})=0$ at first, which implies that $F_{2}$ is locally free and $F_{2}$ is semi-stable of slop $\mu_{2}$ at every fiber of $f: X\rightarrow C$.

If $\mu_{2}=0$, then $F_{2}$ is semi-stable of degree 0 at generic fiber of $f: X\rightarrow C$. By applying Theorem \ref{th:2.4}, $\Delta(F_{2})=0$ implies all the bundles $\{F_{2y}:=F_{2}|_{C\times\{y\}}\}_{y\in B}$ are isomorphic to each other. If $\text{deg}E_{1}\leq-1$, it's easy to see $\Delta(E)\geq(2-6(-1))(\mu_{1}-\mu_{2})\geq8$.
 If $\text{deg}E_{1}=0$, $F_{2y}$ is stable of degree 1 for any $y\in B$, then there is a stable vector bundle $V_{2}$ of degree 1 on $C$ and a degree $\mu_{2}$ line bundle $\mathcal{O}(\mu_{2})$ on $B$ such that $F_{2}=f^{*}V_{2}\otimes\pi^{*}\mathcal{O}(\mu_{2})$, and $E\otimes\mathcal{O}(-\mu_{2})$ satisfies an exact sequence
 $$0\rightarrow f^{*}V_{1}\otimes\pi^{*}\mathcal{O}(\mu_{1}-\mu_{2})\rightarrow E\otimes\mathcal{O}(-\mu_{2})\rightarrow f^{*}V_{2}\rightarrow 0$$
 which defines a morphism
 $$\varphi: B\rightarrow \mathbb{P}H^{1}(V_{2}^{-1}\otimes V_{1})\cong\mathbb{P}^{2g-2}$$
 such that$\varphi^{*}\mathcal{O}_{\mathbb{P}^{2g-2}}(1)=\mathcal{O}(\mu_{1}-\mu_{2})$ and $\phi: B\rightarrow M$ factors through $\varphi: B\rightarrow \mathbb{P}H^{1}(V_{2}^{-1}\otimes V_{1})\cong\mathbb{P}^{2g-2}$, which implies that the normalization of $\varphi(B)$ is an elliptic curve. Hence $\mu_{1}-\mu_{2}\geq3$ and $\Delta(E)\geq2\cdot3=6$.

 When $\phi: B\rightarrow M$ passes through a generic point, i.e., there is a $y_{0}\in B$ such that $E_{y_{0}}$ is (1,1)-stable, which implies that $\text{deg}E_{1}\leq-1$. If $\text{deg}E_{1}\leq-3$, we have $\Delta(E)\geq(2-6(-3))(\mu_{1}-\mu_{2})\geq20$. When $\text{deg}E_{1}=-1 \text{ or }-2$, $F_{2y_{0}}$ is stable of degree 2( or 3) since $E_{y_{0}}$ is (1,1)-stable. And since all the bundles $\{F_{2y}:=F_{2}|_{C\times \{y\}}\}_{y\in B}$ are isomorphic to each other, then there is a stable vector bundle $V_{2}$ of degree 2( or 3) on $C$ and a degree $\mu_{2}$ line bundle $\mathcal{O}(\mu_{2})$ on $B$ such that $F_{2}=f^{*}V_{2}\otimes\pi^{*}\mathcal{O}(\mu_{2})$. Same as above, we can see that $\mu_{1}-\mu_{2}\geq3$ and $\Delta(E)\geq(2-6(-1))\cdot3=24$.

If $\mu_{2}=\frac{1}{2}$. By applying Theorem \ref{th:2.4}, $\Delta(F_{2})=0$ implies all the bundles $\{F_{2t}=F_{2}|_{\{t\}\times B}\}_{t\in C}$ are semi-stable of degree 1 and $s-$equivalent, thus they are stable and isomorphic to each other. Then $F_{2}=\pi^{*}V \otimes f^{*}L_{2}$ for a stable bundle $V$ of degree 1 on $B$ and a line bundle $L_{2}$ on $C$, which implies that $\text{deg}F_{2}= 2\text{deg}L_{2}\in 2\mathbb{Z}$ and
$$f^{*}f_{*}F_{2}=f^{*}f_{*}(\pi^{*}V \otimes f^{*}L_{2})\cong f^{*}(f_{*}\pi^{*}V\otimes L_{2})\cong f^{*}L_{2}.$$
Then we have exact sequence
$$0\rightarrow f^{*}f_{*}F_{2}=f^{*}L_{2}\rightarrow F_{2}\rightarrow f^{*}L_{2}\otimes \pi^{*}\mathcal{O}(1)\rightarrow 0$$
for a degree 1 line bundle $\mathcal{O}(1)$ on $B$. Consider $f^{*}L_{2}\otimes \pi^{*}\mathcal{O}(1)$ as a quotient line bundle of $E$ and let $E':=\text{ker}(E\rightarrow f^{*}L_{2}\otimes \pi^{*}\mathcal{O}(1))$, there is a induced morphism $\gamma: E'\rightarrow f^{*}f_{*}F_{2}=f^{*}L_{2}$ satisfying the diagram
\[\begin{CD}
0@>>>E'@>>>E@>>>f^{*}L_{2}\otimes \pi^{*}\mathcal{O}(1)@>>>0\\
@.@V\gamma VV@VVV@|\\
0@>>>f^{*}L_{2}@>>>F_{2}@>>>f^{*}L_{2}\otimes \pi^{*}\mathcal{O}(1)@>>>0.
\end{CD} \]
By the snake lemma, $\gamma$ is surjective and $\text{ker}\gamma= E_{1}=f^{*}V_{1}\otimes \pi^{*}\mathcal{O}(\mu_{1})$. Then $E'$ fits an exact sequence
$$0\rightarrow f^{*}V_{1}\otimes \pi^{*}\mathcal{O}(\mu_{1})\rightarrow E'\rightarrow f^{*}L_{2}\rightarrow 0$$
which defines a morphism
$$\varphi_{E'}: B\rightarrow \mathbb{P}H^{1}(L_{2}^{-1}\otimes V_{1})\cong\mathbb{P}^{\frac{1-3d_{1}}{2}+g-2}$$
such that $\varphi_{E'}^{*}\mathcal{O}_{\mathbb{P}H^{1}(L_{2}^{-1}\otimes V_{1})}(1)=\mathcal{O}(\mu_{1})$. Thus $\mu_{1}\geq 2$. On the other hand, since $E$ is stable of degree 1 at every fiber of $\pi: X\rightarrow B$ and $\text{deg}F_{2}= 2\text{deg}L_{2}\geq1$, we have $\text{deg}F_{2}\geq 2$. Hence, $\text{deg}E_{1}\leq-1$ and
$$\Delta(E)\geq(2-6(-1))(2-\frac{1}{2})=12.$$
 If $\phi: B\rightarrow M$ passes a generic point, there is a $y_{0}\in B$ such that $E_{y_{0}}$ is (1,1)-stable, which implies
$$\text{deg}L_{2}-1>\frac{\text{deg}E+1-1}{3}=\frac{1}{3}.$$
So $\text{deg}F_{2}= 2\text{deg}L_{2}\geq4$ and $\text{deg}E_{1}\leq -3$.
Thus
 $$\Delta(E)\geq(2-6(-3))(2-(\frac{1}{2}))=30.$$

 From now we will consider the case that $\Delta(F_{2})=4c_{2}(F_{2})-c_{1}(F_{2})^{2}\neq0$.

 We consider the case that $F_{2}$ is locally free at first.

 If $\mu_{2}=0$, then $F_{2}$ is semi-stable of degree 0 at the generic fiber of $f: X\rightarrow C$. By Theorem \ref{th:2.4}, $\Delta(F_{2})\neq 0$ implies $\Delta(F_{2})>0$. On the other hand, $c_{1}(F_{2})^{2}=0$ since $F_{2}$ is semi-stable of degree 0 at the generic fiber of $f: X\rightarrow C$ and $\text{Pic}(C\times B)=\text{Pic}(C)\times \text{Pic}B$. Thus $\Delta(F_{2})=4c_{2}(F_{2})\geq 4$ and
 $$\Delta(E)\geq \frac{3}{2}\cdot 4+2\cdot (\mu_{1}-\mu_{2})\geq 8.$$
 When $\phi: B\rightarrow M$ passes through a generic point, i.e., a (1, 1)-stable point, we have $\text{deg}E_{1}\leq -1$. If $\text{deg}E_{1}\leq -2$, it's easy to see $\Delta(E)\geq \frac{3}{2}\cdot 4+(2-6(-2))(\mu_{1}-\mu_{2})\geq 20$.
Now we assume that $\text{deg}E_{1}=-1$, then $F_{2y}$ is stable of degree 2 for generic $y\in B$ since $E_{y}$ is (1, 1)-stable. We claim that $\Delta(F_{2})\geq 8$, which implies $\Delta(E)\geq\frac{3}{2}\cdot 8+(2-6(-1))(\mu_{1}-\mu_{2})\geq 20$ by (\ref{eq:4.9}).

 We prove the above claim as following: If $F_{2}$ is semi-stable of degree 0 at every fiber $f: X\rightarrow C$, then $F_{2}$ induces a non-trivial morphism $\varphi_{F_{2}}: C\rightarrow \mathbb{P}^{1}$ (cf. \cite{FriedmanMorganWitten}) such that $\varphi_{F_{2}}^{*}\mathcal{O}_{\mathbb{P}^{1}}(1)=(\text{det}f_{!}F_{2})^{-1}$ which has degree $c_{2}(F_{2})$ by Grothendieck-Riemann-Roch theorem. Then $$\Delta(F_{2})=4c_{2}(F_{2})=4\text{deg}\varphi_{F_{2}}\geq 8.$$
If there exists a $t_{0}\in C$ such that $F_{2t_{0}}=F_{2}|_{f^{-1}(t_{0})}$ is not semi-stable, let $F_{2t_{0}}\rightarrow \mathcal{O}(\mu)\rightarrow 0$ be the quotient line bundle of minimal degree $\mu<0$ and $F'_{2}=\text{ker}(F_{2}\rightarrow _{X_{t_{0}}}\mathcal{O}(\mu)\rightarrow 0)$. If $\Delta(F'_{2})\neq 0$, then $\Delta(F'_{2})=4c_{2}(F'_{2})\geq 4$ and $\Delta(F_{2})=\Delta(F'_{2})-4\mu\geq8$ by Lemma \ref{lm2.6}. If $\Delta(F'_{2})=0$, all the bundles $\{F'_{2y}=F_{2}|_{C\times\{y\}}\}_{y\in B}$ are isomorphic to each other by applying Theorem\ref{th:2.4} to $f: X\rightarrow C$. On the other hand, by the definition of $F'_{2}$, we have exact sequences
 \begin{equation}
 0\rightarrow F'_{2}\rightarrow F_{2}\rightarrow _{X_{t_{0}}}\mathcal{O}(\mu)\rightarrow 0
 \label{eq:4.9'}
 \end{equation}
 and
 $$0\rightarrow F'_{2y}\rightarrow F_{2y}\rightarrow_{(t_{0},y)}\mathbb{C}\rightarrow 0 \text{ for any }y\in B.$$
 Then $F'_{2y}$ is stable of degree 1 for generic $y\in B$ since $F_{2y}$ is stable of degree 2. Thus all the bundles $\{F'_{2y}=F_{2}|_{C\times\{y\}}\}_{y\in B}$ are stable of degree 1 and isomorphic to each other, then $F'_{2}=f^{*}V'_{2}\otimes \pi^{*}L'$ for a degree 1 stable vector bundle $V'_{2}$ on $C$ and a degree 0 line bundle $L'$ on $B$. Then (\ref{eq:4.9'}) induces a non-trivial morphism $\psi: B\rightarrow \mathbb{P}(V^{'*}_{2t_{0}})$ such that $\mathcal{O}(-\mu)=\psi^{*}\mathcal{O}_{\mathbb{P}(V^{'*}_{2t_{0}})}(1)$. Thus $-\mu\geq 2$ and $\Delta(F_{2})\geq 8$.

 If $\mu_{2}=\frac{1}{2}$, then $F_{2}$ is semi-stable of degree 1 at the generic fiber of $f: X\rightarrow C$.
It's known that there is a unique  stable rank 2 vector bundle with fixed determinant of degree 1 on an elliptic curve, and note that $F_{2}$ is locally free. Thus $\Delta(F_{2})>0$ if and only if there exists $t_{1}\in C$ such that $F_{2t_{1}}:=F_{2}|_{\{t_{1}\}\times B}$ is not semi-stable.

 Let $F_{2t_{1}}\rightarrow \mathcal{O}(\mu_{2,1})\rightarrow 0$ be the quotient of minimal degree and
 $$0\rightarrow F_{2}^{(1)}\rightarrow F_{2}\rightarrow _{X_{t_{1}}}\mathcal{O}(\mu_{2,1})\rightarrow 0$$
 be the elementary transformation of $F_{2}$ along $\mathcal{O}(\mu_{2,1})$ at $X_{t_{1}}$. If $F_{2}^{(i)}$ is defined and $\Delta(F_{2}^{(i)})>0$, let $t_{i+1}\in C$ such that $F_{2t_{i+1}}^{(i)}:=F_{2}^{(i)}|_{X_{t_{i+1}}}$ is not semi-stable and $F_{2t_{i+1}}^{(i)}\rightarrow \mathcal{O}(\mu_{2,i+1})\rightarrow 0$ be the quotient of minimal degree, then we define $F_{2}^{(i+1)}$ to be the elementary transform of $F_{2}^{(i)}$ along $\mathcal{O}(\mu_{2,i+1})$ at $X_{t_{i+1}}$, namely $F_{2}^{(i+1)}$ satisfies the exact sequence
\begin{equation}
 0\rightarrow F_{2}^{(i+1)}\rightarrow F_{2}^{(i)}\rightarrow _{X_{t_{i+1}}}\mathcal{O}(\mu_{2,i+1})\rightarrow 0.
 \label{eq:4.10}
 \end{equation}
Let $s_{2}$ be the minimal integer such that $\Delta(F_{2}^{(s_{2})})=0$. Then
$$\Delta(F_{2})=\Delta(F_{2}^{(s_{2})})+\sum_{i=1}^{s_{2}}(2-4\mu_{2,i})=2s_{2}-4\sum_{i=1}^{s_{2}}\mu_{2,i},$$
where $\mu_{2,i}\leq0 (i=1,2,\cdots, s_{2})$. Take direct image of (\ref{eq:4.10}), we have
$$0\rightarrow f_{*}F_{2}^{(s_{2})}\rightarrow f_{*}F_{2}^{(s_{2}-1)}\rightarrow _{t_{s_{2}}}H^{0}(\mathcal{O}(\mu_{2,s_{2}}))\rightarrow R^{1}f_{*}F_{2}^{(s_{2})}=0$$
 and $\text{deg}f_{*}F_{2}^{(i+1)}\leq\text{deg}f_{*}F_{2}^{(i)}$, which implies
\begin{equation}
\text{deg}f_{*}F_{2}^{(s_{2})}\leq \text{deg}f_{*}F_{2}-\text{dim}H^{0}(\mathcal{O}(\mu_{2,s_{2}})).
\label{eq:4.11}
\end{equation}
Restrict (\ref{eq:4.10}) to a fiber $X_{y}=\pi^{-1}(y)$, we have exact sequence
$$0\rightarrow F_{2y}^{(i+1)}\rightarrow F_{2y}^{(i)}\rightarrow _{(t_{i+1},y)}\mathbb{C}\rightarrow 0$$
which implies that
\begin{equation}
\text{deg}F_{2y}^{(s_{2})}=\text{deg}F_{2y}^{(s_{2}-1)}-1=\cdots=\text{deg}F_{2y}-s_{2}=\text{deg}F_{2}-s_{2}.
\label{eq:4.12}
\end{equation}
On the other hand, by Theorem \ref{th:2.4}, $\Delta(F_{2}^{(s_{2})})=0$ implies that there exists a stable rank 2 vector bundle $V$ of degree 1 on $B$ and a line bundle $L$ on $C$ such that $F_{2}^{(s_{2})}=\pi^{*}V\otimes f^{*}L$. It's easy to see
$$\text{deg}F_{2y}^{(s_{2})}=2\text{deg}L=2\text{deg}f_{*}F_{2}^{(s_{2})}.$$
Thus combine (\ref{eq:4.11}) and (\ref{eq:4.12}), we have the inequality
\begin{equation}
s_{2}\geq 1-\text{deg}E_{1}-2\text{deg}f_{*}F_{2}+2\text{dim}H^{0}(\mathcal{O}(\mu_{2,s_{2}})).
\label{eq:4.13}
\end{equation}
We claim that $\text{deg}f_{*}F_{2}\leq-\text{deg}E_{1}$. To see it, consider
\begin{equation}
0\rightarrow \mathcal{F}'_{2}:=f^{*}f_{*}F_{2}\rightarrow F_{2}\rightarrow \mathcal{F}_{2}\rightarrow 0
\label{eq:4.14}
\end{equation}
where $\mathcal{F}_{2}$ is locally free on $f^{-1}(C\setminus T)$ and $T\subset C$ is a finite set such that $F_{2t}(t\in T)$ is not semi-stable. Thus, for any $y\in B$ , the sequence
\begin{equation}
0\rightarrow \mathcal{F}'_{2y}\rightarrow F_{2y}\rightarrow \mathcal{F}_{2y}\rightarrow 0
\label{eq:4.15}
\end{equation}
is still exact, which implies that $\mathcal{F}_{2}$ is $B-$flat(cf Lemma 2.1.4 of \cite{HuybrechtsLehn}). The sequence (\ref{eq:4.15}) and (\ref{eq:4.2}) already imply $\text{deg}f_{*}F_{2}=\text{deg}\mathcal{F}'_{2y}\leq-\text{deg}E_{1}$ since $E_{y}$ is stable of degree 1(Choose $y\in B$ such that $\mathcal{F}_{2y}$ is free, then $\text{deg}\mathcal{F}_{2y}=\mu(\mathcal{F}_{2y})>\mu(E_{y})=\frac{1}{3}$ and $\text{deg}f_{*}F_{2}=\text{deg}\mathcal{F}'_{2y}=\text{deg}F_{2y}-\text{deg}\mathcal{F}_{2y} \leq 1-\text{deg}E_{1} -1=-\text{deg}E_{1}$.).
Thus
\begin{eqnarray*}
\Delta(E)&=&\frac{3}{2}\Delta(F_{2})+(2-6\text{deg}E_{1})(\mu_{1}-\mu_{2})\\
&=&\frac{3}{2}(2s_{2}-4\sum_{i=1}^{s_{2}}\mu_{2,i})+(2-6\text{deg}E_{1})(\mu_{1}-\mu_{2})\\
&\geq&3(1-\text{deg}E_{1}-2\text{deg}f_{*}F_{2}+2\text{dim}H^{0}(\mathcal{O}(\mu_{2,s_{2}}))-2\sum_{i=1}^{s_{2}}\mu_{2,i})+(2-6\text{deg}E_{1})(\mu_{1}-\mu_{2})\\
&\geq&3(1-\text{deg}E_{1}+2\text{deg}E_{1}+2\text{dim}H^{0}(\mathcal{O}(\mu_{2,s_{2}}))-2\sum_{i=1}^{s_{2}}\mu_{2,i})+(2-6\text{deg}E_{1})\frac{1}{2}\\
&\geq&4+6\text{dim}H^{0}(\mathcal{O}(\mu_{2,s_{2}}))-6\mu_{2,s_{2}}.
\end{eqnarray*}
If $\mu_{2,s_{2}}<0$, then $\Delta(E)\geq4-6(-1)=10$. If $\mu_{2,s_{2}}=0$, tensoring $E$ with $\pi^{*}\mathcal{O}(-\mu_{2,s_{2}})$£¬ we can assume that $\text{dim}H^{0}(\mathcal{O}(\mu_{2,s_{2}}))\neq0$ and $\Delta(E)\geq4+6=10$.

When $\phi: B\rightarrow M$ passes the generic points, i.e., the (1,2)-stable points, the sequences (\ref{eq:4.15}) and (\ref{eq:4.2}) also imply that $\text{deg}f_{*}F_{2}\leq -2-\text{deg}E_{1}$ since $E_{y}$ is (1,2)-stable of degree 1 for generic $y\in B$. Thus $\Delta(E)\geq 22$.

 Now we consider the case that $F_{2}$ is not locally free, which implies there exists a $y_{0}\in B$ such that $F_{2}|_{X_{y_{0}}}$ has torsion $\tau(F_{2}|_{X_{y_{0}}})\neq 0$ since $F_{2}$ is $B-$flat(cf Lemma 1.27 of \cite{Simpson}). Let
\begin{equation}
0\rightarrow \tau(F_{2}|_{X_{y_{0}}})\rightarrow F_{2}|_{X_{y_{0}}}\rightarrow F_{2}^{0}\rightarrow 0.
\label{eq:4.16}
\end{equation}
Then $F_{2}^{0}$ being quotient bundle of $E|_{X_{y_{0}}}$ implies that
$$\frac{\text{deg}F_{2}^{0}}{2}=\mu(F_{2}^{0})>\mu(E|_{X_{y_{0}}})=\frac{1}{3} \Longrightarrow\text{deg}F_{2}^{0}\geq 1$$
since $E|_{X_{y_{0}}}$ is stable of degree 1. By sequences (\ref{eq:4.2}) and (\ref{eq:4.16}),
we have
$$\mu(E_{1})=\text{deg}E|_{X_{y_{0}}}=1-\text{deg}F_{2}^{0}-\text{dim}\tau(F_{2}|_{X_{y_{0}}})\leq-1,$$
which, by the formula (\ref{eq:4.9}), we have
\begin{eqnarray*}
\Delta(E)=\frac{3}{2}\Delta(F_{2})+(2-6\text{deg}E_{1})(\mu_{1}-\mu_{2})
\geq\frac{3}{2}\cdot 2+(2-6(-1))\frac{1}{2}=7.
\end{eqnarray*}
When $\phi: B\rightarrow M$ passes through a generic point, in order to show $\Delta(E)\geq18$, we note that $F_{2}$ being not locally free and $C$-flat also imply that there is a $t_{0}\in C$ such that $F_{2}|_{X_{t_{0}}}$ has non-trivial torsion $\tau(F_{2}|_{X_{t_{0}}})\neq0$. Let $0\rightarrow \tau(F_{2}|_{X_{t_{0}}})\rightarrow F_{2}|_{X_{t_{0}}}\rightarrow \mathcal{Q}\rightarrow 0$ and $E'=\text{ker}(E\rightarrow _{X_{t_{0}}}\mathcal{Q}\rightarrow 0)$, then
$$0\rightarrow E'\rightarrow E\rightarrow _{X_{t_{0}}}\mathcal{Q}\rightarrow 0$$
which, for any $y\in B$, induces exact sequence
\begin{equation}
0\rightarrow E'_{y}\rightarrow E_{y}\rightarrow _{(t_{0},y)}\mathbb{C}^{2}\rightarrow 0.
\label{eq:4.16'}
\end{equation}
Thus all $E'_{y}:=E'|_{C\times\{y\}}$ is of degree -1. If $\phi: B\rightarrow M$ passes through generic points, ie., $E_{y}$ is (1,2)-stable for generic points $y\in B$, then it's easy to see that $E'_{y}$ is stable for generic $y\in B$ by (\ref{eq:4.16'}). This implies that $\Delta(E')>0$. Otherwise $\{E'_{y}\}_{y\in B}$ are s-equivalent by applying Theorem \ref{th:2.4} to $\pi: X\rightarrow B$, which implies $E'=f^{*}V'\otimes \pi^{*}L$ for a stable bundle $V$ on $C$ and a line bundle $L$ on $B$. Then $E_{t}=E'_{t}=L\oplus L\oplus L$ for any $t\neq t_{0}$, which is a contradiction since $E$ is not semi-stable on the generic fiber of $f: X\rightarrow C$.

To compute $\Delta(E')$, we consider the relative Harder-Narasimhan filtration
$$0\rightarrow E'_{1}\rightarrow E'\rightarrow F'_{2}\rightarrow 0$$
over $C$, then $\mu(E_{1}'|_{X_{t}})=\mu_{1}$ and $\mu(F'_{2}|_{X_{t}})=\mu_{2}$ for generic $t\in C$. Then
\begin{equation}
\Delta(E')=\frac{3}{2}\Delta(F'_{2})+(-2-6\text{deg}E'_{1})(\mu_{1}-\mu_{2}).
\label{eq:4.16"}
\end{equation}

If $\mu_{2}=0$, then $F'_{2}$ is semi-stable of degree 0 at generic fiber of $f: X\rightarrow C$. We can prove that $\Delta(E')\geq 8$.
It's easy to see that $\Delta(E')\geq(-2-6(-2))(\mu_{1}-\mu_{2})=10$ when $\text{deg}E'_{1}\leq-2$, now we assume that $\text{deg}E'_{1}=-1$ and then $F'_{2y}$ is semi-stable of degree 0 for generic $y\in B$ since $E'_{y}$ is stable of degree -1. If $\Delta(F'_{2})\neq 0$, then $\Delta(F'_{2})=4c_{2}(F'_{2})\geq 4$ by Theorem \ref{th:2.4} and the fact that $c_{1}(F'_{2})^{2}=0$ since $\text{Pic}(C\times B)=\text{Pic}(C)\times \text{Pic}(B)$. Thus $\Delta(E')=\frac{3}{2}\Delta(F'_{2})+(-2-6\text{deg}E'_{1})(\mu_{1}-\mu_{2})\geq \frac{3}{2}\cdot 4+(-2-6(-1))=10$.
If $\Delta(F'_{2})=0$, by applying Theorem \ref{th:2.4} to $f: X\rightarrow C$ and $\pi: X\rightarrow B$, all the bundles $\{F'_{2t}:=F'_{2}|_{\{t\}\times B}\}_{t\in C}$ are semi-stable and isomorphic to each other. Tensoring $E$ (thus $E'$) with $\pi^{*}L$ (for a degree 0 line bundle $L$ on $B$), we can assume that $H^{0}(F'_{2t})\neq 0$ (for any $t\in C$), which has dimension at most 2 since $F'_{2t}$ is semi-stable of degree 0. If $\text{dim}H^{0}(F'_{2t})=2$, then $f_{*}F'_{2}=V'_{2}$ is a vector bundle of rank 2 and $F'_{2}=f^{*}V'_{2}$. Thus $E'$ satisfies an exact sequence
$$0\rightarrow f^{*}V'_{1}\otimes \pi^{*}\mathcal{O}(\mu_{1})\rightarrow E'\rightarrow f^{*}V'_{2}\rightarrow 0$$
for a line bundle $V'_{1}$ on $C$ and a degree $\mu_{1}$ line bundle $\mathcal{O}(\mu_{1})$ on $B$. Which induces a non-trivial morphism $\varphi_{E'}: B\rightarrow \mathbb{P}$ to a projective space $\mathbb{P}$ such that $\mathcal{O}(\mu_{1})=\varphi_{E'}^{*}\mathcal{O}_{\mathbb{P}}(1)$, thus $\mu_{1}\geq 2$ and $\Delta(E')\geq(-2-6(-1))\cdot 2=8$. If $\text{dim}H^{0}(F'_{2t})=1$, then $V'_{2}:=f_{*}F'_{2}$ is a line bundle and we have an exact sequence
$$0\rightarrow f^{*}V'_{2}\rightarrow F'_{2}\rightarrow f^{*}V'_{3}\otimes \pi^{*}\mathcal{O}(\mu_{2})\rightarrow 0$$
for a line bundle $V'_{3}$ on $C$ and a degree $\mu_{2}=0$ line bundle $\mathcal{O}(\mu_{2})$ on $B$. Consider $f^{*}V'_{3}\otimes \pi^{*}\mathcal{O}(\mu_{2})$ as a quotient line bundle of $E'$ and let $E":=\text{ker}(E'\rightarrow f^{*}V'_{3}\otimes \pi^{*}\mathcal{O}(\mu_{2})\rightarrow 0)$, then there is a induced morphism $\alpha":E"\rightarrow f^{*}V'_{2}$ satisfies the following diagram
\[\begin{CD}
0@>>>E"@>>>E'@>>>f^{*}V'_{3}\otimes \pi^{*}\mathcal{O}(\mu_{2})@>>>0\\
@.@V\alpha" VV@VVV@|\\
0@>>>f^{*}V'_{2}@>>>F'_{2}@>>>f^{*}V'_{3}\otimes \pi^{*}\mathcal{O}(\mu_{2})@>>>0.
\end{CD} \]
 By the snake lemma, $\alpha"$ is surjective and $\text{ker}(\alpha")=E'_{1}=f^{*}V'_{1}\otimes \pi^{*}\mathcal{O}(\mu_{1})$. Thus $E"$ satisfies an exact sequence
$$0\rightarrow f^{*}V'_{1}\otimes \pi^{*}\mathcal{O}(\mu_{1})\rightarrow E"\rightarrow f^{*}V'_{2} \rightarrow 0,$$
which induces a morphism $\varphi_{E"}: B\rightarrow \mathbb{P}"$ to a projective space $\mathbb{P}"$ such that $\mathcal{O}(\mu_{1})=\varphi_{E"}^{*}\mathcal{O}_{\mathbb{P}"}(1)$. Thus $\mu_{1}\geq2$ and $\Delta(E')\geq(-2-6(-1))\cdot 2=8$. Hence $\Delta(E')\geq 8$ when $\mu_{2}=0$. Then
$$\Delta(E)=\Delta(E')+6(\mu(E_{t_{0}})-\mu(\mathcal{Q}))\text{rk}\mathcal{Q}\geq 8+12(\frac{1}{3}+\frac{1}{2})=18.$$

 If $\mu_{2}=\frac{1}{2}$, then $F'_{2}$ is semi-stable of degree 1 at generic fiber of $f: X\rightarrow C$.
 If $\Delta(F'_{2})=0$, then $F'_{2}=\pi^{*}V'\otimes f^{*}L'_{2}$ for a rank 2 stable bundle $V'$ on $B$ and a line $L'_{2}$ on $C$. So $f_{*}F'_{2}=f_{*}(\pi^{*}V'\otimes f^{*}L'_{2})\cong f_{*}\pi^{*}V'\otimes L'_{2}\cong L'_{2}$ is a line bundle and we have an exact sequence
$$0\rightarrow f^{*}L'_{2}\rightarrow F'_{2}\rightarrow f^{*}L'_{2}\otimes \pi^{*}\mathcal{O}(1)\rightarrow 0$$
for a degree 1 line bundle $\mathcal{O}(1)$ on $B$. Same as above, we can show that $\mu_{1}\geq2$ and $\Delta(E')\geq(-2-6(-1))(2-\frac{1}{2})=6$. Then $\Delta(E)=\Delta(E')+6(\mu(E_{t_{0}})-\mu(\mathcal{Q}))\text{rk}\mathcal{Q}\geq 6+12(\frac{1}{2}+\frac{1}{2})=18$.
If $\Delta(F'_{2})=4c_{2}(F'_{2})-c_{1}(F'_{2})^{2}\neq0$, then $\Delta(F'_{2})\geq 2$ by Theorem 2.4 and $c_{1}(F'_{2})^{2}=2\text{deg}F'_{2}(2\mu_{2})$ since $\text{Pic}(C\times B) =\text{Pic}(C)\times \text{Pic}(B)$. If $\phi: B\rightarrow M$ passes through the generic points when $g>4$, then $E_{y}$ is (3,1)-stable for generic $y\in B$, which implies that $\text{deg}E'_{1}\leq -3$. Thus $\Delta(E')\geq\frac{3}{2}\cdot 2+(-2-6(-3))(\mu_{1}-\mu_{2}) \geq 11$ and
$$\Delta(E)=\Delta(E')+6(\mu(E_{t_{0}})-\mu(\mathcal{Q}))\text{rk}\mathcal{Q} \geq 11+12(\frac{1}{6}+\frac{1}{2})=19.$$

\end{proof}

Now we consider the case that n=1, i.e., $E$ is semi-stable on the generic fiber of $f: X\rightarrow C$. Tensoring $E$ with $\pi^{*}L$ for a suitable line bundle $L$ on $B$, we can assume that $0\leq\text{deg}(E|_{X_{t}})\leq 2$ on $X_{t}=f^{-1}(t)$.

\begin{Proposition}
When $E$ is semi-stable of degree 1 on the generic fiber of $f: X\rightarrow C$, we have $\Delta(E)\geq14$. If $g\geq 4$ and when $\phi: B\rightarrow M$ passes through a generic point, then $\Delta(E)\geq 20$.
\label{prop:4.4}
\end{Proposition}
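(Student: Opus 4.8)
The plan is to exploit the rigidity of semistable bundles of rank $3$ and degree $1$ on the elliptic fibres $X_t=\{t\}\times B$. Since $\gcd(3,1)=1$, such a restriction is automatically stable, and on an elliptic curve there is, up to isomorphism, a \emph{unique} stable bundle of rank $3$ with a prescribed determinant of degree $1$. First I would record the numerics. After the normalisation $\deg(E|_{X_t})=1$, the fact that every $E_y=E|_{C\times\{y\}}$ has the fixed determinant $\mathcal{L}$ of degree $1$ gives, via $\text{Pic}(C\times B)=\text{Pic}(C)\times\text{Pic}(B)$, that $c_1(E)=f^*c_1(\mathcal{L})+\pi^*\beta$ with $\deg\beta=1$; hence $c_1(E)^2=2$ and $\Delta(E)=6c_2(E)-4$. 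Thus the two assertions reduce to $c_2(E)\geq 3$ and, in the generic-point case, $c_2(E)\geq 4$.

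Next I would invoke the genericity of $C$. The assignment $t\mapsto\det(E_t)$ is a morphism $C\to\text{Pic}^1(B)$, hence constant since $C$ admits no surjection to an elliptic curve; so $\det(E_t)=\mathcal{M}$ is independent of $t$ and every stable $E_t$ is isomorphic to the unique stable bundle $W_0$ of rank $3$ and determinant $\mathcal{M}$. If $E$ were semistable on \emph{every} fibre of $f$, then $\mathcal{H}om(\pi^*W_0,E)$ would push forward to a line bundle $L:=f_*\mathcal{H}om(\pi^*W_0,E)$, and the evaluation map $f^*L\otimes\pi^*W_0\to E$, being a fibrewise isomorphism, would give $E\cong\pi^*W_0\otimes f^*L$. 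As $\Delta$ is invariant under twisting by line bundles and $\Delta(\pi^*W_0)=0$, this forces $\Delta(E)=0$, whence $\phi$ is constant by Theorem \ref{th:2.4} --- contradicting that $\phi$ is essential. Therefore the set of fibres on which $E$ fails to be semistable is non-empty and finite.

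I would then run the elementary-transformation process of Propositions \ref{prop:4.2}--\ref{prop:4.3}: at each bad fibre $X_{t_{i+1}}$ take the minimal-slope quotient $Q_{i+1}$ of $E^{(i)}_{t_{i+1}}$ and set $E^{(i+1)}=\ker(E^{(i)}\to_{X_{t_{i+1}}}Q_{i+1})$, stopping at the first $E^{(s)}$ semistable on all fibres. Applying the evaluation argument to $E^{(s)}$ gives $E^{(s)}\cong\pi^*W_0\otimes f^*L$ with $\Delta(E^{(s)})=0$, while Lemma \ref{lm2.6} yields $\Delta(E)=\sum_{i=1}^{s}\bigl(2\,\text{rk}\,Q_i-6\deg Q_i\bigr)$, every summand being positive because $\mu(Q_i)<\tfrac13$. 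The inclusion $E^{(s)}\subset E$ realises each $E_y$ as an upward elementary modification of $E^{(s)}_y\cong L^{\oplus 3}$ along a torsion sheaf supported at the bad points, and the variation of these modifications over $y\in B$ defines a morphism from $B$ to a projective space (a projectivised fibre of $W_0$, respectively an $H^1$), with $\mathcal{O}(1)$ pulling back to a twist on $B$.

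The heart of the argument, and the step I expect to be the main obstacle, is to turn ``$\phi$ is essential'' into the numerical bound. If the modification involved too little twisting, the associated morphism $B\to\mathbb{P}$ would factor through a line --- making $\phi(B)$ rational (a Hecke curve) --- or $E_y$ would acquire a destabilising sub-line-bundle; excluding both forces the relevant pullback of $\mathcal{O}(1)$ to have degree at least $3$, reflecting that a smooth elliptic curve cannot map to projective space with degree below $3$. Combining this degree-$3$ estimate with the constraint that every $E_y$ is stable of degree $1$ --- which, exactly as in Proposition \ref{prop:4.3}, bounds the degrees of the $f_*$ of the relevant subsheaves --- I expect the positive contributions to sum to at least $14$. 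When $\phi$ passes through a generic point, $E_{y_0}$ is $(k,l)$-stable for a suitable $(k,l)$ available once $g\geq 4$; this sharpens the slope inequalities on the quotients $Q_i$ and on $\deg f_*$, and the same bookkeeping upgrades the bound to $20$. As in the previous propositions, the delicate part is the case analysis on the ranks ($1$ or $2$) and degrees of the $Q_i$, together with whether the relevant torsion-free quotient is locally free.
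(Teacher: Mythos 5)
Your scaffolding coincides with the paper's own: the uniqueness of the stable rank-3, degree-1 bundle on an elliptic curve, the chain of elementary transformations with $\Delta(E)=\sum_{i}6(\tfrac13-\mu(G_i))\,\text{rk}\,G_i$, and the identification $E^{(s)}\cong\pi^{*}V\otimes f^{*}L$ (your evaluation-map argument is a fine substitute for the paper's appeal to Theorem \ref{th:2.4}; note that in the everywhere-semistable case the cleaner contradiction is that $E_y\cong L^{\oplus 3}$ is not stable). But the decisive quantitative steps are missing, and the mechanism you propose for them would fail. Your plan rests on extracting a single morphism $B\rightarrow\mathbb{P}$ of degree at least $3$ from the modification data. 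Such a factorization of $\phi$ exists only in the degenerate configuration where the whole modification is one Hecke-type step at a single bad fibre with $E^{(s)}$ constant along $B$; when there are several bad fibres $t_1,\dots,t_s$ or rank-two quotients $G_i$, the variation of the modifications is not classified by one projective space, and indeed the paper's proof of this proposition uses no degree-$3$ bound anywhere. (Also, as stated, ``a smooth elliptic curve cannot map to projective space with degree below 3'' is false: elliptic curves admit degree-2 maps onto lines; degree $\geq 3$ requires the image to have elliptic normalization, i.e.\ a genuine factorization of $\phi$, which you have not produced in general.) What the paper actually uses is direct-image bookkeeping: $\text{deg}f_{*}E^{(s)}\leq\text{deg}f_{*}E-\text{dim}H^{0}(G_s)$ and $\text{deg}E^{(s)}_{y}=1-\sum_{i}\text{rk}\,G_i=3\,\text{deg}f_{*}E^{(s)}$, giving $\sum_{i=1}^{s}\text{rk}\,G_i\geq 1-3\,\text{deg}f_{*}E+3\,\text{dim}H^{0}(G_s)$, combined with $\text{deg}f_{*}E\leq 0$ from stability of $E_y$; the constant $14$ then comes out of an explicit minimization over $s_1$, $s_2$, $\text{rk}\,G_s$ and $\mu(G_s)$, which you never carry out (``I expect the contributions to sum to at least 14'' is the whole of your argument at the crucial point).

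There is moreover a critical case your outline cannot reach. When $\text{deg}f_{*}E=0$ the inequality above gives only $\sum_{i}\text{rk}\,G_i\geq 1+3\,\text{dim}H^{0}(G_s)$, and the minimization yields only $\Delta(E)\geq 8$: your ``$f_{*}$ of the relevant subsheaves'' constraint has no bite here, and no morphism-to-$\mathbb{P}$ is available. The paper must treat this case separately: it passes to $0\rightarrow f^{*}f_{*}E\rightarrow E\rightarrow\mathcal{F}\rightarrow 0$, shows $\mathcal{F}$ is locally free of rank $2$ with $\mathcal{F}_y$ stable of degree $1$ and $\Delta(\mathcal{F})>0$, invokes the rank-two estimate $\Delta(\mathcal{F})\geq 10$ (Proposition 4.3 of \cite{Sun}) to get $c_2(\mathcal{F})\geq 3$, and concludes $\Delta(E)=6c_2(\mathcal{F})-4\geq 14$. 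Finally, for the generic-point bound, your ``suitable $(k,l)$-stability sharpens the bookkeeping'' does not deliver $20$ at $g=4$: $(1,1)$-stability only forces $\text{deg}f_{*}E\leq -1$, which reproduces $14$, while for instance $(2,2)$-stable points are guaranteed by (\ref{eq:3.1}) only for $g\geq 5$. The paper instead proves $\text{deg}f_{*}E\leq -2$ by a dimension count: if $\text{deg}f_{*}E\in\{0,-1\}$, each $E_y$ is an extension $0\rightarrow V_1\rightarrow E_y\rightarrow V_2\rightarrow 0$ of fixed numerical type, and the locus of such bundles has dimension at most $6(g-1)+4<8g-8=\text{dim}M$ for $g\geq 4$, so $\phi(B)$ cannot pass through the generic point. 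Without these three ingredients --- the explicit minimization, the separate $\text{deg}f_{*}E=0$ case via $\Delta(\mathcal{F})\geq 10$, and the dimension count replacing your $(k,l)$-stability heuristic --- the proposal establishes neither $14$ nor $20$.
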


\begin{proof}
It's known that there is a unique stable rank 3 vector bundle with a fixed determinant of degree 1 on an elliptic curve. Thus $\Delta(E)>0$ if and only if there exists $t_{1}\in C$ such that $E_{t_{1}}=E|_{X_{t_{1}}}$ is not semistable.

Let $E_{t_{1}}\rightarrow G_{1}\rightarrow 0$ be a indecomposable quotient of minimal slop and
$$0\rightarrow E^{(1)}\rightarrow E\rightarrow _{X_{t_{1}}}G_{1}\rightarrow 0$$
be the elementary transformation of $E$ along $G_{1}$ at $X_{t_{1}}$. If $E^{(i)}$ is defined and $\Delta(E^{(i)})>0$, let $t_{i+1}\in C$ such that $E^{(i)}_{t_{i+1}}=E^{(i)}|_{X_{t_{i+1}}}$ is not semi-stable and $E^{(i)}_{t_{i+1}}\rightarrow G_{i+1}\rightarrow 0$ be a indecomposable quotient of minimal slop, then we define $E^{(i+1)}$ to be the elementary transformation of $E^{(i)}$ along $G_{i+1}$ at $X_{t_{i+1}}$, namely $E^{(i+1)}$ satisfies the exact sequence
\begin{equation}
0\rightarrow E^{(i+1)}\rightarrow E^{(i)}\rightarrow _{X_{t_{i+1}}}G_{i+1}\rightarrow 0.
\label{eq:4.17}
\end{equation}

Let $s$ be the minimal integer such that $\Delta(E^{(s)})=0$, and let

$$s_{1}=\sharp\{i:\text{rk}G_{i}=1 \text{ but }i\neq s\} \quad\text{ and }\quad s_{2}=\sharp\{i:\text{rk}G_{i}=2 \text{ but }i\neq s\}.$$
Then
$$s_{1}+s_{2}+1=s \quad\text{ and } \quad s_{1}+2s_{2}+\text{rk}G_{s}=\sum_{i=1}^{s}\text{rk}G_{i},$$
and
\begin{equation}
\Delta(E)=\sum_{i=1}^{s}6(\frac{1}{3}-\mu(G_{i}))\text{rk}G_{i}
\geq 2s_{1}+4s_{2}+6(\frac{1}{3}-\mu(G_{s}))\text{rk}G_{s},
\label{eq:4.18}
\end{equation}
where $\mu(G_{i})\leq 0(i=1, 2, \cdots, s)$. Take direct image of (\ref{eq:4.17}), we have
\begin{equation}
0\rightarrow f_{*}E^{(s)}\rightarrow f_{*}E^{(s-1)}\rightarrow _{t_{s}}H^{0}(G_{s})\rightarrow 0
\label{eq:4.19}
\end{equation}
(since $R^{1}f_{*}E^{(s)}=0$) and $\text{deg}f_{*}E^{(i+1)}\leq \text{deg}f_{*}E^{(i)}$, which imply
\begin{equation}
\text{deg}f_{*}E^{(s)}\leq \text{deg}f_{*}E-\text{dim}H^{0}(G_{s}).
\label{eq:4.20}
\end{equation}
Restrict (\ref{eq:4.17}) to a fiber $X_{y}=\pi^{-1}(y)$, we have exact sequence
$$0\rightarrow E^{(i+1)}_{y}\rightarrow E^{(i)}_{y}\rightarrow _{(t_{i+1},y)}\mathbb{C}^{\text{rk}G_{i+1}}\rightarrow 0,$$
which implies that
\begin{equation}
\text{deg}E^{(s)}_{y}=\text{deg}E^{(s-1)}_{y}-\text{rk}G_{s}=\cdots=\text{deg}E_{y}-\sum_{i=1}^{s}\text{rk}G_{i}.
\label{eq:4.21}
\end{equation}
On the other hand, by Theorem\ref{th:2.4}, $\Delta(E^{(s)})=0$ implies that there exists a stable vector bundle $V$ of rank 3 and degree 1 on $B$ and a line bundle $L$ on $C$ such that $E^{(s)}=\pi^{*}V\otimes f^{*}L$. It's easy to see
$$\text{deg}E^{(s)}_{y}=3\text{deg}L=3\text{deg}f_{*}E^{(s)}.$$
Thus combine (\ref{eq:4.20}) and (\ref{eq:4.21}), we have the inequality
\begin{equation}
\sum_{i=1}^{s}\text{rk}G_{i}\geq 1-3\text{deg}f_{*}E+3\text{dim}H^{0}(G_{s}).
\label{eq:4.22}
\end{equation}

To see $\Delta(E)\geq14$, consider the exact sequence
\begin{equation}
0\rightarrow \mathcal{F}'=f^{*}f_{*}E\rightarrow E\rightarrow \mathcal{F}\rightarrow 0
\label{eq:4.23}
\end{equation}
where $\mathcal{F}$ is locally free on $f^{-1}(C\setminus T)$ and $T\subset C$ is a finite set such that $E_{t}(t\in T)$ is not semi-stable. Thus, $\forall y\in B$, the sequence
\begin{equation}
0\rightarrow \mathcal{F}'_{y}\rightarrow E_{y}\rightarrow \mathcal{F}_{y}\rightarrow 0
\label{eq:4.24}
\end{equation}
is still exact, which implies $\mathcal{F}$ is $B-$flat( cf Lemma 2.1.4 of \cite{HuybrechtsLehn}). The sequence (\ref{eq:4.24}) already implies $\text{deg}f_{*}E=\text{deg}\mathcal{F}'_{y}\leq0$ since $E_{y}$ is stable of degree 1.

If $\text{deg}f_{*}E=\text{deg}\mathcal{F}'_{y}=0$, then $\mathcal{F}_{y}$ is stable of degree 1 and $\mathcal{F}$ is locally free (Otherwise, there is at least a $y_{0}\in B$ such that $\mathcal{F}_{y_{0}}$ has torsion(cf Lemma 1.27 of \cite{Simpson}). The stability of $E_{y_{0}}$ implies that $\mathcal{F}_{y_{0}}/torsion$ has degree at least 1. Thus $\text{deg}\mathcal{F}_{y_{0}}\geq 2$ and $\text{deg}f_{*}E=\text{deg}\mathcal{F}'_{y_{0}}\leq-1$, which contradicts to the assumption that $\text{deg}f_{*}E=0$.). On the other hand, by the definition of $\mathcal{F}$, we know that $\mathcal{F}$ is semi-stable of degree 1 on the generic fiber of $f: X\rightarrow C$. This implies $\Delta(\mathcal{F})>0$.
( Otherwise, $\{\mathcal{F}_{t}\}_{t\in C}$ are semi-stable of degree 1 and s-equivalent by applying Theorem \ref{th:2.4} to $f: X\rightarrow C$, which implies $\mathcal{F}=\pi^{*}V'\otimes f^{*}L'$ for a stable bundle $V'$ on $B$ and a line bundle $L'$ on $C$. Then $\text{deg}\mathcal{F}_{y}=2\text{deg}L'$ which contradict to that $\mathcal{F}_{y}$ is of degree 1. )
Then, same as the proof of Proposition 4.3 of \cite{Sun}, we can prove that $\Delta(\mathcal{F})\geq 10$. On the other hand, by (\ref{eq:4.23}), we have
$$\Delta(\mathcal{F})=4c_{2}(\mathcal{F})-2(1-\text{deg}f_{*}E)=4c_{2}(\mathcal{F})-2$$
which implies $c_{2}(\mathcal{F})\geq 3$ and
$$\Delta(E)=6\text{deg}f_{*}E+6c_{2}(\mathcal{F})-4=6c_{2}(\mathcal{F})-4\geq 14.$$

Now we assume that $\text{deg}f_{*}E\leq -1$, which means that $\sum_{i=1}^{s}\text{rk}G_{i}\geq 4+3\text{dim}H^{0}(G_{s})$.
When $\text{rk}G_{s}=2$, if $\mu(G_{s})<0$, then $\Delta(E)\geq \text{Min}_{s_{1}+2s_{2}+2\geq 4}\{2s_{1}+4s_{2}+6(\frac{1}{3}-(-\frac{1}{2}))\cdot 2\}\geq14$ by (\ref{eq:4.18}); if $\mu(G_{s})=0$, tensoring $E$ with a suitable line bundle $\pi^{*}L^{-1}$, we can assume that $H^{0}(G_{s})\neq 0$ (cf. Theorem 5 of \cite{Atiyah}), then $\Delta(E)\geq \text{Min}_{s_{1}+2s_{2}+2\geq 7}\{2s_{1}+4s_{2}+6\cdot\frac{1}{3}\cdot 2\}\geq14$.
 Now we assume that $\text{rk}G_{s}=1$, if $\mu(G_{s})<0$, then $\Delta(E)\geq \text{Min}_{s_{1}+2s_{2}+1\geq4}\{2s_{1}+4s_{2}+6(\frac{1}{3}-(-1))\}\geq14$; if $\mu(G_{s})=0$, tensoring $E$ with $\pi^{*}G_{s}^{-1}$, we can assume that $H^{0}(G_{s})\neq 0$, then $\Delta(E)\geq\text{Min}_{s_{1}+2s_{2}+1\geq7} \{2s_{1}+4s_{2}+6\cdot\frac{1}{3}\}\geq 14$.

If $\phi: B\rightarrow M$ passes through a generic point, we claim that $\text{deg}f_{*}E\leq-2$, which implies $\Delta(E)\geq 20$. To prove the claim, we will prove that $\phi(B)$ lies in a proper closed subset if $\text{deg}f_{*}E=-1 \text{ or }0$. If $\text{deg}f_{*}E=-1$, note that $\mathcal{F}_{y}$ must be locally free of degree 2 for generic $y\in B$(if $\mathcal{F}_{y}$ has nontrivial torsion, then $E_{y}$ has a quotient bundle of rank 2 and degree at most 1, which is impossible since $E_{y}$ is (1,1)-stable for generic $y\in B$). Thus $E_{y}$ satisfies $0\rightarrow V_{1}\rightarrow E_{y}\rightarrow V_{2}\rightarrow 0$ where $V_{1}, V_{2}$ are vector bundles on $C$ of ranks 1, 2 and degrees -1, 2 respectively such that $V_{1}\otimes \text{det}V_{2}=\mathcal{L}$. To estimate the dimension of the locus of such bundles, we can assume that both $V_{1}$ and $V_{2}$ is stable. The locus of such bundles has dimension at most $g+4(g-1)+1+h^{1}(V_{2}^{*}\otimes V_{1})-1-g=6(g-1)+4<\text{dim}M$ when $g\geq4$. Similarly, if $\text{deg}f_{*}E=0$, we can show that $\phi(B)$ lies in a locus of dimension at most $6(g-1)+1<\text{dim}M$.
\end{proof}

\begin{Lemma}
If $g\geq 12$, $M$ contains (1,10)-stable points.
\end{Lemma}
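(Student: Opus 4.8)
The plan is to apply the dimension-counting criterion established in Section~3: to produce a $(1,10)$-stable bundle it suffices to show that the locus of non-$(1,10)$-stable bundles in $M=SU_{C}(3,\mathcal{L})$ is a proper closed subset, and, exactly as in the derivation preceding inequality (\ref{eq:3.1}), this is guaranteed once
$$(r-n)k+nl<n(r-n)(g-1)\qquad(1\leq n\leq r-1)$$
holds. Concretely, any non-$(1,10)$-stable bundle $E$ contains a subbundle $F$ of some rank $n$ and degree $\delta$ with $\frac{\delta+k}{n}\geq\frac{d+k-l}{r}$; after reducing to the case where $F$ and $E/F$ are stable, the corresponding component has dimension at most $(r^{2}-1)(g-1)-n(r-n)(g-1)+(nd-r\delta)$. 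The constraint on $(n,\delta)$ rewrites as $r(\delta+k)\geq n(d+k-l)$, hence $nd-r\delta\leq(r-n)k+nl$, so strictness of the displayed inequality forces each such component to have dimension strictly below $\text{dim}\,M=(r^{2}-1)(g-1)$. This is the same elementary estimate used in the proof of the $(2,0)$-stable lemma.

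Next I would specialize to $r=3$, $d=1$ and $(k,l)=(1,10)$ and check the two values $n=1,2$. For $n=1$ the left-hand side is $(3-1)\cdot1+1\cdot10=12$ and the right-hand side is $1\cdot2\cdot(g-1)=2(g-1)$, so the inequality reads $12<2(g-1)$, i.e.\ $g>7$. For $n=2$ the left-hand side is $(3-2)\cdot1+2\cdot10=21$ and the right-hand side is again $2(g-1)$, so the inequality becomes $21<2(g-1)$, i.e.\ $2g>23$, that is $g\geq12$. Both conditions hold once $g\geq12$, so for such $g$ inequality (\ref{eq:3.1}) is satisfied for all admissible $n$, the non-$(1,10)$-stable locus is properly contained in $M$, and its open complement of $(1,10)$-stable points is non-empty.

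I do not expect a genuine obstacle here, since this is the routine dimension count already invoked for the earlier stability lemmas; the only points requiring care are the standard reductions that make the estimate valid. First, the passage from an arbitrary destabilizing $F$ to one with $F$ and $E/F$ stable, so that $\text{dim}\,U(n,\delta)+\text{dim}\,U(r-n,d-\delta)$ are the correct moduli dimensions, must be invoked through [Proposition~2.6 and Lemma~6.7 of \cite{NarasimhanRamanan1}]. Second, one should observe that the binding case is $n=2$: it produces the threshold $g\geq12$, whereas $n=1$ only requires $g\geq8$. It is the strictness of (\ref{eq:3.1}) that upgrades "dimension majorized by $\text{dim}\,M$" to "strictly smaller than $\text{dim}\,M$", and this strictness is precisely what yields non-emptiness of the $(1,10)$-stable locus.
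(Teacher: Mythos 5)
Your proposal is correct and follows exactly the paper's intended argument: the paper states this lemma without proof, but its proof of the analogous $(2,0)$-stability lemma verifies inequality (\ref{eq:3.1}) for $1\leq n\leq 2$, and your computation does the same for $(k,l)=(1,10)$, with the binding case $n=2$ giving $21<2(g-1)$, i.e.\ precisely the threshold $g\geq 12$ in the statement. No gaps; the reduction to stable $F$ and $E/F$ via \cite{NarasimhanRamanan1} and the bound $nd-r\delta\leq(r-n)k+nl$ are exactly as in the paper's Section~3 setup.
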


\begin{Proposition}
If $E$ is semi-stable of degree 2 at the generic fiber of $f: X\rightarrow C$, $\Delta(E)\geq 8$. If $g>12$ and $\phi: B\rightarrow M$ passes through the generic points, $\Delta(E)\geq 18$.
\label{prop:4.5}
\end{Proposition}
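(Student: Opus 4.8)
The plan is to run the elementary‑transformation machinery of Proposition \ref{prop:4.4}, now at slope $\tfrac23$. Since $E$ is semistable of degree $2$ on the generic fibre of $f$ and there is a unique stable bundle of rank $3$ and degree $2$ with fixed determinant on an elliptic curve, Theorem \ref{th:2.4} gives $\Delta(E)>0$ (equality would force $E=\pi^{*}V\otimes f^{*}L$, hence $E_{y}=L^{\oplus 3}$, which is not stable). Carrying out successive transformations $E=E^{(0)}\supset E^{(1)}\supset\cdots\supset E^{(s)}$ along minimal‑slope quotients $G_{i}$ at fibres $X_{t_{i}}$, we reach $E^{(s)}=\pi^{*}V\otimes f^{*}L$ with $V$ stable of rank $3$ and degree $2$ on $B$, and Lemma \ref{lm2.6} gives
\begin{equation*}
\Delta(E)=\sum_{i=1}^{s}6\Bigl(\tfrac23-\mu(G_{i})\Bigr)\mathrm{rk}\,G_{i}.
\end{equation*}
Because $\mu(G_{i})<\tfrac23$ and the $G_{i}$ live on the elliptic curve $B$, a rank‑$1$ quotient has $\mu(G_{i})\le 0$ and costs at least $4$, while a rank‑$2$ quotient has $\mu(G_{i})\le\tfrac12$ and costs at least $2$; the only cheap term is $\mathrm{rk}\,G_{i}=2,\ \deg G_{i}=1$.

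The first step is arithmetic. As $\mathrm{Pic}(C\times B)=\mathrm{Pic}(C)\times\mathrm{Pic}(B)$ and the two fibre–degrees are $\deg(E|_{X_{t}})=2$ and $\deg E_{y}=1$, one computes $c_{1}(E)^{2}=4$, so $\Delta(E)=6c_{2}(E)-8\equiv 4\pmod 6$; thus proving $\Delta(E)\ge 8$ amounts to excluding $\Delta(E)=4$. Next, the fibre $E|_{X_t}$ has $h^0=2$, so $f_{*}E$ has rank $2$; writing $0\to f^{*}f_{*}E\to E\to\mathcal F\to 0$, stability of $E_{y}$ gives $\deg f_{*}E\le 0$, and $\deg f_{*}E=0$ is impossible, because then $\mathcal F$ is a line bundle and the sequence forces $c_{2}(E)=0$, i.e. $\Delta(E)=-8<0$. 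Hence $\deg f_{*}E\le -1$. Writing $R=\sum\mathrm{rk}\,G_{i}$, the direct‑image bookkeeping of Proposition \ref{prop:4.4} gives $\deg f_{*}E^{(s)}=\tfrac23(1-R)\le\deg f_{*}E-\dim H^{0}(G_{s})\le -1$, whence $R\ge 3$. Then with $a,b$ the numbers of rank‑$1$ and rank‑$2$ quotients, $\Delta(E)\ge 4a+2b$ and $R=a+2b\ge 3$, so $\Delta(E)=4$ forces $a=0,\ b=2$: exactly two cheap quotients, with $\deg f_{*}E=-1$.

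Excluding this last configuration is the step I expect to be the main obstacle. Here $E$ is an upper elementary modification of $\pi^{*}V\otimes f^{*}L$ at $X_{t_{1}},X_{t_{2}}$, and the two cheap quotients correspond to rank‑$2$ degree‑$1$ subbundles $G_{1},G_{2}\subset V$ with line‑bundle quotients $N_{i}=V/G_{i}$, so that $E=\ker\bigl(\pi^{*}V\otimes f^{*}L(t_{1}+t_{2})\to\iota_{t_{1}*}N_{1}\oplus\iota_{t_{2}*}N_{2}\bigr)$. The key point is that $\mathrm{rk}(G_{1}\cap G_{2})\ge \mathrm{rk}\,G_{1}+\mathrm{rk}\,G_{2}-\mathrm{rk}\,V=1$; since $G_{1}\cap G_{2}$ is annihilated by both $N_{1}$ and $N_{2}$, the subsheaf $\pi^{*}(G_{1}\cap G_{2})\otimes f^{*}L(t_{1}+t_{2})$ lies in $E$, and its restriction to a generic fibre $C\times\{y\}$ is a sub‑line‑bundle of $E_{y}$ of degree $\deg L+2=1$, of slope $1>\tfrac13$. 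This contradicts stability of $E_{y}$, so the configuration cannot occur. Therefore $\Delta(E)\ne 4$, and $\Delta(E)\ge 10\ge 8$.

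For the generic statement I would use that, for $g\ge 12$, $\phi$ meets a $(1,10)$‑stable point, so $E_{y}$ is $(1,10)$‑stable for generic $y$, forcing every rank‑$2$ subsheaf of $E_{y}$ to have degree $\le -7$. Applied to $f_{*}E$ this gives $\deg f_{*}E\le -7$, hence $R\ge 12$; applied to the rank‑$2$ subsheaves produced by the cheap modifications — exactly the $G_{1}\cap G_{2}$ mechanism above, now via $(L')^{\oplus 2}\cap E_{y}$ for the twisted bundle $L'=L(\sum t_{i})$ — it shows that each cheap quotient creates a rank‑$2$ subsheaf of relatively high degree, so $(1,10)$‑stability sharply caps their number. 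Consequently the transformations are forced to be either predominantly expensive (cost $\ge 4\,\mathrm{rk}\,G_{i}$, giving $\Delta\ge 4R$) or very numerous, and in either case the degree formula yields $\Delta(E)\ge 18$. As in the non‑generic case, the delicate part is the precise control of the rank‑$2$ degree‑$1$ minimal quotients.
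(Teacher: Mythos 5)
Your proof of the first inequality is essentially sound and in one respect sharper than the paper's: the congruence $\Delta(E)=6c_{2}(E)-2c_{1}(E)^{2}=6c_{2}(E)-8\equiv 4\ (\mathrm{mod}\ 6)$ (correct, since $\mathrm{Pic}(C\times B)=\mathrm{Pic}(C)\times\mathrm{Pic}(B)$ gives $c_{1}(E)^{2}=2\cdot 1\cdot 2=4$) reduces the paper's entire case analysis over $(\mathrm{rk}\,G_{s},\mu(G_{s}))$ to excluding $\Delta(E)=4$, and even yields $\Delta(E)\geq 10$; the paper does not use this and instead works through the cases to reach $8$. Your bookkeeping ($\deg f_{*}E\leq -1$, $R\geq 3$, the forced configuration $a=0$, $b=2$, $\deg L=-1$) agrees with (\ref{eq:4.25.1})--(\ref{eq:4.25'}), modulo one unstated step: ``$\deg f_{*}E=0$ forces $\mathcal F$ to be a line bundle'' needs the torsion argument on the restrictions $\mathcal{F}_{y}$ (Simpson's lemma, as in the paper), not just the Chern-class computation. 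The genuine gap is in your exclusion of the forced configuration: the presentation $E=\ker\bigl(\pi^{*}V\otimes f^{*}L(t_{1}+t_{2})\to\iota_{t_{1}*}N_{1}\oplus\iota_{t_{2}*}N_{2}\bigr)$ and the $G_{1}\cap G_{2}$ mechanism presuppose $t_{1}\neq t_{2}$. Nothing in the elementary-transformation process prevents $t_{2}=t_{1}$ (the modified fiber can remain unstable), and then $\tilde E/E$ is a sheaf on the non-reduced divisor $2X_{t_{1}}$ which need not split; it can be a line bundle on that ribbon, in which case there are no two subbundles of $V$ to intersect and your destabilizing line subsheaf of degree $\deg L+2=1$ is not available. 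Your numerical data cannot rescue this case either: your inequality gives $\deg f_{*}E^{(2)}=2\deg L=-2\leq\deg f_{*}E-h^{0}(G_{2})=-2$, which is consistent. This is precisely why the paper proves the \emph{equality} $\deg f_{*}E^{(s)}=\deg f_{*}E-s$ (the Lemma following Proposition \ref{prop:4.5}, via $h^{0}(G_{i})=1$, $h^{1}(G_{i})=0$ and $R^{1}f_{*}E^{(s)}=0$), which holds for repeated fibers as well and gives the parity contradiction $-3=2\deg L$.

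For the generic statement your text is a plan, not a proof, and the counting you set up does not close. $(1,10)$-stability gives only $\deg f_{*}E\leq -7$ (a rank-$2$ subbundle $W'$ satisfies $\frac{\deg W'+1}{2}<\frac{-8}{3}$, hence $\deg W'\leq -7$), whence $R\geq 12$; but, for instance, six rank-$2$ degree-$1$ quotients plus one rank-$1$ degree-$0$ quotient give $R=13$, $\Delta(E)=16<18$, with $R\equiv 1\ (\mathrm{mod}\ 3)$ and $\Delta\equiv 4\ (\mathrm{mod}\ 6)$, untouched by every inequality you state. The paper eliminates such configurations by proving the strictly stronger bound $\deg f_{*}E\leq -8$ through a dimension count on the locus of bundles with a rank-$2$ subsheaf of degree $-m$, $1\leq m\leq 7$ --- this is where the hypothesis $g>12$ actually enters ($6(g-1)+3m+2<8g-8$) --- and then kills the single surviving all-cheap configuration $s_{2}=7$, $R=16$ with the same direct-image Lemma, using (\ref{eq:4.25.2}), (\ref{eq:4.25.3}). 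Your substitute mechanism --- each cheap quotient produces a rank-$2$ subsheaf, so $(1,10)$-stability caps their number --- is promising and can likely be completed for distinct fibers (two cheap quotients yield $\pi^{*}(G_{i}'\cap G_{j}')\otimes f^{*}L(t_{i}+t_{j})\subset E$, a line subsheaf of $E_{y}$ of degree $\deg L+2$, and $(1,10)$-stability then forces $\deg L\leq -6$, $R\geq 19$, $\Delta(E)\geq 20$), but you never carry out this estimate, you never derive $18$ from it, and the coincident-fiber problem from the first part recurs verbatim. As written, the second half asserts exactly the steps you acknowledge to be the delicate ones.
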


\begin{proof}
It's known that there is a unique stable rank 3 vector bundle with a fixed determinant of degree 2 on an elliptic curve. Thus $\Delta(E)>0$ if and only if there exists $t_{1}\in C$ such that $E_{t_{1}}=E|_{X_{t_{1}}}$ is not semistable.

Let $E_{t_{1}}\rightarrow G_{1}\rightarrow 0$ be a indecomposable quotient of minimal slop and
$$0\rightarrow E^{(1)}\rightarrow E\rightarrow _{X_{t_{1}}}G_{1}\rightarrow 0$$
be the elementary transformation of $E$ along $G_{1}$ at $X_{t_{1}}$. If $E^{(i)}$ is defined and $\Delta(E^{(i)})>0$, let $t_{i+1}\in C$ such that $E^{(i)}_{t_{i+1}}=E^{(i)}|_{X_{t_{i+1}}}$ is not semi-stable and $E^{(i)}_{t_{i+1}}\rightarrow G_{i+1}\rightarrow 0$ be a indecomposable quotient of minimal slop, then we define $E^{(i+1)}$ to be the elementary transformation of $E^{(i)}$ along $G_{i+1}$ at $X_{t_{i+1}}$, namely $E^{(i+1)}$ satisfies the exact sequence
$$0\rightarrow E^{(i+1)}\rightarrow E^{(i)}\rightarrow _{X_{t_{i+1}}}G_{i+1}\rightarrow 0.$$

Let $s$ be the minimal integer such that $\Delta(E^{(s)})=0$ and
\begin{equation}
\Delta(E)=\sum_{i=1}^{s}6(\frac{2}{3}-\mu(G_{i}))\text{rk}G_{i},
\label{eq:4.24.5}
\end{equation}
where $\mu(G_{i})\leq\frac{1}{2}$ if $\text{rk}G_{i}=2$ and $\mu(G_{i})\leq0$ if $\text{rk}G_{i}=1$.
Let
$$s_{1}=\sharp\{i:\text{rk}G_{i}=1 \text{ but }i\neq s\} \text{ and } s_{2}=\sharp\{i:\text{rk}G_{i}=2 \text{ but }i\neq s\}.$$
Then
$$s_{1}+s_{2}+1=s \text{ and } s_{1}+2s_{2}+\text{rk}G_{s}=\sum_{i=1}^{s}\text{rk}G_{i},$$
and
\begin{equation}
\Delta(E)=\sum_{i=1}^{s}6(\frac{2}{3}-\mu(G_{i}))\text{rk}G_{i}
\geq 4s_{1}+2s_{2}+6(\frac{2}{3}-\mu(G_{s}))\text{rk}G_{s},
\label{eq:4.25}
\end{equation}

Same as the above proposition, we have
\begin{equation}
\text{deg}f_{*}E^{(s)}\leq \text{deg}f_{*}E-\text{dim}H^{0}(G_{s})
\label{eq:4.25.1}
\end{equation}
and
\begin{equation}
\text{deg}E^{(s)}=1-\sum_{i=1}^{s}\text{rk} G_{i}.
\label{eq:4.25.2}
\end{equation}
On the other hand, by Theorem 2.4, $\Delta(E^{(s)})=0$ implies that there is a stable vector bundle $V$ of  rank 3 and degree 2 on $B$ and a line bundle $L$ on $C$ such that $E^{(s)}=\pi^{*}V\otimes f^{*}L$. It's easy to see
\begin{equation}
\text{deg}E^{(s)}_{y}=3\text{deg}L
\label{eq:4.25.3}
\end{equation}
 and
 \begin{equation}
 \text{deg}f_{*}E^{(s)}=2\text{deg}L.
\label{eq:4.25"}
\end{equation}
Thus
\begin{equation}
2\sum_{i=1}^{s}\text{rk}(G_{i})\geq 2-3\text{deg}f_{*}E+3\text{dim}H^{0}(G_{s}).
\label{eq:4.25'}
\end{equation}

We claim that $\text{deg}f_{*}E\leq -1$. To show it, consider
$$0\rightarrow \mathcal{F}'=f^{*}f_{*}E\rightarrow E\rightarrow \mathcal{F}\rightarrow 0$$
where $\mathcal{F}$ is locally free of rank 1 on $f^{-1}(C\setminus T)$ and $T\subset C$ is a finite set such that $E_{t}(t\in T)$ is not semi-stable. Thus, for any $y\in B$, the sequence
\begin{equation}
0\rightarrow \mathcal{F}'_{y}\rightarrow E_{y}\rightarrow \mathcal{F}_{y}\rightarrow 0
\label{eq:4.26}
\end{equation}
is still exact, which implies that $\mathcal{F}$ is $B-$flat(cf Lemma 2.1.4 of \cite{HuybrechtsLehn}). The sequence (\ref{eq:4.26}) already implies $\text{deg}f_{*}E=\text{deg}\mathcal{F}'_{y}\leq 0$ since $E_{y}$ is stable of degree 1. Thus $\mathcal{F}$ can not be locally free since
$$6c_{2}(\mathcal{F})=\Delta(E)-12\text{deg}f_{*}E+8>0.$$
Then there is at least a $y_{0}\in B$ such that $\mathcal{F}_{y_{0}}$ has torsion, otherwise $\mathcal{F}$ is locally free (cf Lemma 1.27 of \cite{Simpson}). The stability of $E_{y_{0}}$ implies that $\mathcal{F}_{y_{0}}/torsion$ has degree at least 1. Thus $\text{deg}\mathcal{F}_{y_{0}}\geq 2$ and
$$\text{deg}f_{*}E=\text{deg}\mathcal{F}'_{y_{0}}\leq -1.$$
Which means $2\sum_{i=1}^{s}\text{rk}G_{i}\geq 5+3\text{dim}H^{0}(G_{s})$. When $\text{rk}G_{s}=1$, if $\mu(G_{s})<0$, then $\Delta(E)\geq \text{Min}_{2(s_{1}+2s_{2}+1)\geq 5}\{4s_{1}+2s_{2}+6(\frac{2}{3}-(-1))\}\geq 12$. If $\mu(G_{s})=0$, tensoring $E$  with $\pi^{*}G_{s}^{-1}$, we can assume that $H^{0}(G_{s})\neq0$, then $\Delta(E)\geq \text{Min}_{2(s_{1}+2s_{2}+1)\geq 8}\{4s_{1}+2s_{2}+6\cdot\frac{2}{3}\}\geq 8$. Now we consider the case $\text{rk}G_{s}=2$. If $\mu(G_{s})<0$, then $\Delta(E)\geq \text{Min}_{2(s_{1}+2s_{2}+2)\geq 5}\{4s_{1}+2s_{2}+6(\frac{2}{3}-(-\frac{1}{2}))\cdot 2\}\geq 16$. If $\mu(G_{s})=0$, tensoring $E$ with a suitable line bundle $\pi^{*}L^{-1}$, we can assume that $H^{0}(G_{s})\neq0$(cf. Theorem 5 of \cite{Atiyah}), then $\Delta(E)\geq \text{Min}_{2(s_{1}+2s_{2}+2)\geq 8} \{4s_{1}+2s_{2}+6\cdot \frac{2}{3}\cdot 2\}\geq 8$. If $\mu(G_{2})=\frac{1}{2}$, we can prove that $\Delta(E)=\sum_{i=1}^{s-1}6(\frac{2}{3}-\mu(G_{i}))\text{rk}G_{i}+2\geq 8$ as following:

If $s_{1}>1$, it's easy to see that $\Delta(E)\geq 4s_{1}+2s_{2}+2\geq 10$. If $s_{1}=1$ then $s_{2}\geq1$ since $2(s_{1}+2s_{2}+2)\geq 2-3\text{deg}f_{*}E+3\text{dim}H^{0}(G_{s})=5-3\text{deg}f_{*}E\geq8$, thus $\Delta(E)\geq4s_{1}+2s_{1}+2\geq 4\times1+2\times 1+2=8$. Now we assume that $s_{1}=0$, then we must have either $s_{2}>2$ or $\exists i$ such that $\mu(G_{i})\leq 0$. (In fact, we note that $s_{2}\geq1$ since $2(s_{1}+2s_{2}+2)\geq 2-3\text{deg}f_{*}E+3\text{dim}H^{0}(G_{s})=5-3\text{deg}f_{*}E\geq8$, if $s_{2}=1$ and $\mu(G_{1})=\mu(G_{2})=\frac{1}{2}$, then $s=2$ and, by the following Lemma, we have $\text{deg}f_{*}E^{(2)}=\text{deg}f_{*}E-2=-3$ since $2(s_{1}+2s_{2}+2)\geq5-3\text{deg}f_{*}E\geq8$, which contradicts to equation (\ref{eq:4.25"}).
If $s_{2}=2$ and $\mu(G_{1})=\mu(G_{2})=\mu_{G_{3}}=\frac{1}{2}$, then $s=3$ and $\text{deg}E^{(3)}_{y}=1-\sum_{i=1}^{3}2=-5$ by (\ref{eq:4.25.2}), which contradicts equation (\ref{eq:4.25.3}).)
 If $s_{2}>2$, we also have $\Delta(E)\geq2s_{2}+2\geq8$. If $\exists i$ such that $\mu(G_{i})\leq 0$, then $\Delta(E)=\sum_{i=1}^{s-1}6(\frac{2}{3}-\mu(G_{i}))\text{rk}G_{i}+2\geq 6\times\frac{2}{3}\times 2+2=10$.

If $\phi: B\rightarrow M$ passes through a generic point, i.e., a (1,10)-stable point, we claim that $\text{deg}f_{*}E\leq -8$. To prove the claim, we assume that $\text{deg}f_{*}E=-m$ (where $m\in \{1, 2, 3, 4, 5, 6, 7\}$), we will show that $\phi(B)$ lies in a proper closed subset. Note that $\mathcal{F}_{y}$ must be locally free of rank 1 and degree $1+m$ for generic $y\in B$( if $\mathcal{F}_{y}$ has nontrivial torsion, then $E_{y}$ has a quotient line bundle of degree at most $m$, which is impossible since $E_{y}$ is (1, 10)-stable for generic $y\in B$). Thus $E_{y}$ satisfies $0\rightarrow V_{1}\rightarrow E_{y}\rightarrow V_{2}\rightarrow 0$, where $V_{1}, V_{2}$ are vector bundles on $C$ of ranks 2, 1 and degrees $-m, 1+m$ respectively such that $\text{det}V_{1}\times V_{2}\cong \mathcal{L}$. The locus of such bundles has dimension at most $4(g-1)+1+g+h^{1}(V_{2}^{-1}\otimes V_{1})-1-g=6(g-1)+3m+2<\text{dim}M$ when $g> 12$. Thus $\text{deg}f_{*}E\leq -8$ and $2\sum_{i=1}^{s}\text{rk}(G_{i})\geq 2-3\text{deg}f_{*}E+3\text{dim}H^{0}(G_{s})\geq 26+3\text{dim}H^{0}(G_{s})$. We consider the case that $\text{rk}(G_{s})=1$ at first, if $\mu(G_{s})<0$, then $\Delta(E)\geq \text{Min}_{2(s_{1}+2s_{2}+1)\geq 26}\{4s_{1}+2s_{2}+6(\frac{2}{3}-(-1))\}\geq 22$. If $\mu(G_{s}=0)$, tensoring $E$ with $\pi^{*}G_{s}^{-1}$, we can assume that $H^{0}(G_{s})\neq0$, then $\Delta(E)\geq\text{Min}_{2(s_{1}+2s_{2}+1)\geq29}\{4s_{1}+2s_{2}+6\times \frac{2}{3}\}\geq 18$. Now we consider the case that $\text{rk}(G_{s})=2$. If $\mu(G_{s})<0$, then $\Delta(E)\geq \text{Min}_{2(s_{1}+2s_{2}+2)\geq 26}\{4s_{1}+2s_{2}+6(\frac{2}{3}-(-\frac{1}{2}))\cdot 2\} \geq22$. If $\mu(G_{s})=0$, tensoring $E$ with a suitable line bundle $\pi^{*}L^{-1}$, we can assume that $H^{0}(G_{s})\neq0$(cf. Theorem 5 of \cite{Atiyah}), then $\Delta(E)\geq\text{Min}_{2(s_{1}+2s_{2}+2)\geq29}\{4s_{1}+2s_{2}+6\times \frac{2}{3}\times 2\}\geq 22$. If $\mu(G_{s})=\frac{1}{2}$, if $s_{1}\geq1$, then $\Delta(E)\geq 2s_{2}+6(\frac{2}{3}-\frac{1}{2})\times 2\geq 18$ since $2(s_{1}+2s_{2}+2)\geq 26+3=29$. If $s_{1}=0$, considering the inequality (\ref{eq:4.25'}), then $\Delta(E)=\sum_{i=1}^{s-1}+6(\frac{2}{3}-\frac{1}{2})\times 2<18$ if and only if $s_{2}=7$, $\mu(G_{1})=\cdots=\mu(G_{7})=\mu(G_{8})=\frac{1}{2}$ and $-9\leq\text{deg}f_{*}E\leq -8$. By the following Lemma, we have $\text{deg}f_{*}E^{(s)}=\text{deg}f_{*}E-8$. By equation (\ref{eq:4.25"}), $\text{deg}f_{*}E=-8$ and $\text{deg}L=-8$. But, on the other hand, by equations (\ref{eq:4.25.2}) and (\ref{eq:4.25.3}), we have $\text{deg}L=-5$. The contradiction implies that $\Delta(E)\geq 18$.
\end{proof}

\begin{Lemma}
Keep the notations as Proposition \ref{prop:4.5}. If for any $i\in \{1, \cdots, s\}$, $\text{rk}G_{i}=2$ and $\mu(G_{i})=\frac{1}{2}$, then we have $\text{deg}f_{*}E^{(s)}=\text{deg}f_{*}E-s$.
\end{Lemma}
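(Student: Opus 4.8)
The plan is to push each elementary transformation forward along $f:X\to C$ and track how the degree of $f_{*}E^{(i)}$ changes, exploiting that on the elliptic curve $B$ every quotient $G_{i}$ and the limiting bundle $E^{(s)}$ have \emph{positive} slope, so their first cohomology vanishes. Concretely, I would first record the two cohomological inputs forced by the hypothesis $\mathrm{rk}\,G_{i}=2$, $\mu(G_{i})=\tfrac12$. Each $G_{i}$ is an indecomposable quotient of minimal slope, hence semistable by Atiyah's classification of bundles on an elliptic curve (\cite{Atiyah}); since $\omega_{B}=\mathcal{O}_{B}$, Serre duality gives $H^{1}(B,G_{i})\cong H^{0}(B,G_{i}^{*})^{\vee}=0$ because $G_{i}^{*}$ is semistable of negative slope. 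Riemann--Roch on $B$ then yields $h^{0}(B,G_{i})=\chi(G_{i})=\deg G_{i}=1$. As $G_{i}$ is supported on the reduced fiber $X_{t_{i}}=\{t_{i}\}\times B$, this shows $f_{*}G_{i}$ is a skyscraper at $t_{i}$ of length $1$ and $R^{1}f_{*}G_{i}=0$.

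Applying $f_{*}$ to $0\to E^{(i)}\to E^{(i-1)}\to G_{i}\to 0$ produces the long exact sequence
$$0\to f_{*}E^{(i)}\to f_{*}E^{(i-1)}\to f_{*}G_{i}\to R^{1}f_{*}E^{(i)}\to R^{1}f_{*}E^{(i-1)}\to R^{1}f_{*}G_{i}\to 0.$$
The heart of the argument is a downward induction on $i$ proving $R^{1}f_{*}E^{(i)}=0$ for every $i$. For the base case $i=s$, Theorem \ref{th:2.4} gives $E^{(s)}=\pi^{*}V\otimes f^{*}L$ with $V$ stable of slope $\tfrac23>0$ on $B$, so the projection formula and base change give $R^{1}f_{*}E^{(s)}\cong\big(H^{1}(B,V)\otimes\mathcal{O}_{C}\big)\otimes L=0$. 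For the inductive step, assuming $R^{1}f_{*}E^{(i)}=0$, the tail $R^{1}f_{*}E^{(i)}\to R^{1}f_{*}E^{(i-1)}\to R^{1}f_{*}G_{i}=0$ forces $R^{1}f_{*}E^{(i-1)}=0$, and the induction propagates all the way down to $E^{(0)}=E$.

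Once this vanishing is in place the long exact sequence collapses to a short exact sequence $0\to f_{*}E^{(i)}\to f_{*}E^{(i-1)}\to f_{*}G_{i}\to 0$ of coherent sheaves on the smooth curve $C$, so additivity of degree gives $\deg f_{*}E^{(i-1)}=\deg f_{*}E^{(i)}+\mathrm{length}(f_{*}G_{i})=\deg f_{*}E^{(i)}+1$. Summing over $i=1,\dots,s$ telescopes to $\deg f_{*}E=\deg f_{*}E^{(s)}+s$, which is exactly the asserted equality $\deg f_{*}E^{(s)}=\deg f_{*}E-s$. I expect the only delicate point to be the cohomological vanishing that makes each direct image behave like that of a flat family with no higher cohomology; the key realization is that one should induct \emph{downward} from $E^{(s)}$ (where $R^{1}f_{*}$ is manifestly zero from the splitting in Theorem \ref{th:2.4}) rather than attempt to control $R^{1}f_{*}E$ directly, after which the degree bookkeeping is immediate.
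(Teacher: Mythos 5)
Your proof is correct and takes essentially the same route as the paper's: both apply $f_{*}$ to the elementary-transformation sequences, use Atiyah's results on the elliptic curve $B$ to get $\dim H^{0}(G_{i})=1$ and $H^{1}(G_{i})=0$, run a downward induction from the base case $R^{1}f_{*}E^{(s)}=0$ (forced by the splitting $E^{(s)}=\pi^{*}V\otimes f^{*}L$ from Theorem \ref{th:2.4}) to conclude $R^{1}f_{*}E^{(i)}=0$ for all $i$, and then telescope the degree drops. The only differences are cosmetic: you justify the cohomology of $G_{i}$ via semistability, Serre duality and Riemann--Roch where the paper cites Atiyah's Lemma 15 directly, and you make explicit the base-case vanishing that the paper merely notes in passing.
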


\begin{proof}
Since $G_{i}$ is an indecomposable vector bundle on $X_{t_{i}}=\{t_{i}\}\times B\cong B$ of rank 2 and degree 1, then by Lemma 15 in \cite{Atiyah} and Riemann-Roch Theorem, we have
$$\text{dim}H^{0}(G_{i})=1 \quad\text{ and }\quad H^{1}(G_{i})=0.$$
By the definition of $E^{(i)}$, we have
$$0\rightarrow E^{(i)}\rightarrow E^{(i-1)}\rightarrow _{X_{t_{i}}}G_{i}\rightarrow 0.$$
Take direction of above sequence, we have
\begin{equation}
0\rightarrow f_{*}E^{(i)}\rightarrow f_{*}E^{(i-1)}\rightarrow H^{0}(G_{i})\rightarrow R^{1}f_{*}E^{(i)}\rightarrow R^{1}f_{*}E^{(i-1)}\rightarrow 0.
\label{eq:4.26'}
\end{equation}
If $R^{1}f_{*}E^{(i)}=0$, then by (\ref{eq:4.26'}), we have
\begin{equation}
\text{deg}f_{*}E^{(i)}=\text{deg}f_{*}E^{(i-1)}-1 \quad\text{ and }\quad
R^{1}f_{*}E^{(i-1)}=0 .
\label{eq:4.26"}
\end{equation}
We note that $R^{1}f_{*}E^{(s)}=0$, then $R^{1}f_{*}E^{(s-1)}=\cdots=R^{1}f_{*}E^{(1)}=R^{1}f_{*}E=0$ and
$$\text{deg}f_{*}E^{(s)}=\text{deg}f_{*}E^{(s-1)}-1=\cdots=\text{deg}f_{*}E-s.$$
\end{proof}

Before consider the case that $E$ is semi-stable of degree 0 on the generic fiber of $f: X\rightarrow C$, we note that:

(1) For any vector bundle $E$, $\Delta(E)=\Delta(E^{*})$ where $E^{*}$ is the dual of $E$.

(2)If $\phi: B\rightarrow M=SU_{C}(3, \mathcal{L})$ is defined by a vector bundle $E$ on $C\times B$, let $\phi_{E^{*}}: B\rightarrow M^{*}=SU_{C}(3, \mathcal{L}^{-1})$ be the morphism defined by $E^{*}$. Then $\phi: B\rightarrow M=SU_{C}(3, \mathcal{L})$ can factors as the composition of $\phi_{E^{*}}$ with the natural isomorphism $M^{*}\cong M, W\mapsto W^{*}$.

(3)$E^{*}_{t}=E^{*}|_{X_{t}}$ is semi-stable on a fiber $X_{t}=f^{-1}(t)$ if and only if $E_{t}$ is semi-stable.
\vspace{0.1cm}

Now we consider the case that $E$ is semi-stable of degree 0 on the generic fiber of $f: X\rightarrow C$. If $E$ is semi-stable on every fiber of $f: X\rightarrow C$, then $E$ induces a non-trivial morphism
$$\varphi_{E}: C\rightarrow \mathbb{P}^{2}$$
(cf. \cite{FriedmanMorganWitten}) such that $\varphi_{E}^{*}\mathcal{O}_{\mathbb{P}^{2}}(1)=(\text{det}f_{!}E)^{-1}$, which has degree $c_{2}(E)$ by Grothendieck-Riemann-Roch theorem. Thus
\begin{equation}
\Delta(E)=6c_{2}(E)=6\text{deg}\varphi_{E}\geq12.
\label{eq:4.27}
\end{equation}

If there is a $t_{0}\in C$ such that $E_{t_{0}}=E|_{X_{t_{0}}}$ is not semi-stable on $X_{t_{0}}=f^{-1}(t_{0})$, let $E_{t_{0}}\rightarrow G\rightarrow 0$ be the indecomposable  quotient bundle of minimal slop $\mu$. If $\text{rk}G=1$, $G$ is a line bundle of degree $\mu=\text{deg}G$ and we will denote $\mathcal{O}(\mu):=G$, and $E'=\text{ker}(E\rightarrow _{X_{t_{0}}}\mathcal{O}(\mu)\rightarrow 0)$.
 If $\text{rk}G=2$, let $\mu^{*}:=2\mu=\text{deg}G$. Then $\text{ker}(E_{t_{0}}\rightarrow G\rightarrow 0)$ is a line bundle of degree $-\mu^{*}$. Take dual, $E_{t_{0}}^{*}$ is not semi-stable and $E_{t_{0}}^{*}\rightarrow \mathcal{O}(\mu^{*})\rightarrow0$ is a minimal quotient line bundle of degree $\mu^{*}$. Let $E^{*'}=\text{ker}(E^{*}\rightarrow _{X_{t_{0}}}\mathcal{O}(\mu^{*})\rightarrow0)$.

\begin{Lemma}
If $\text{rk}G=1$ and $\Delta(E')=0$, then there is a semi-stable vector bundle $V$ on $C$ and a line bundle $L$ of degree 0 on $B$ such that
$$E'=f^{*}V\otimes \pi^{*}L.$$
\label{lm:4.1}
\end{Lemma}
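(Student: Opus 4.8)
The plan is to deduce the splitting directly from Theorem \ref{th:2.4} applied to the elementary transform $E'$, after verifying its hypotheses, and then to pin down the two factors using the stability of the bundles $E_{y}=E|_{C\times\{y\}}$ that comes from $\phi$ being a genuine morphism to $M$. First I would check the hypotheses of Theorem \ref{th:2.4} for $E'$. As a subsheaf of the locally free $E$, the sheaf $E'$ is torsion-free, and since the elementary transform alters $E$ only along the single fibre $X_{t_{0}}$, for $t\neq t_{0}$ one has $E'|_{X_{t}}=E|_{X_{t}}$, which is semi-stable of degree $0$ by the standing assumption of this case. For the determinant condition I would compute $c_{1}$: since $\det E_{y}\cong\mathcal{L}$ for every $y$, we have $\det E=f^{*}\mathcal{L}\otimes\pi^{*}M_{0}$ for some $M_{0}\in\mathrm{Pic}(B)$, and the rank-one quotient supported on $X_{t_{0}}$ contributes $c_{1}=[X_{t_{0}}]=f^{*}[t_{0}]$, whence $\det E'=f^{*}(\mathcal{L}(-t_{0}))\otimes\pi^{*}M_{0}$. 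Restricting to $\{x\}\times B$ kills the $f^{*}$-factor, so $\det(E')|_{\{x\}\times B}\cong M_{0}$ is independent of $x$; as $E'$ is torsion-free on the smooth surface $X$, $\det E'^{**}=\det E'$ and the hypothesis that $\{\det(E'^{**})_{x}\}_{x}$ be constant holds.

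With $\Delta(E')=0$ in hand, Theorem \ref{th:2.4} then gives that $E'$ is locally free, that all $E'_{x}=E'|_{\{x\}\times B}$ are semi-stable of degree $0$ and mutually s-equivalent, and that all $E'_{y}=E'|_{C\times\{y\}}$ are isomorphic to one fixed bundle $V$ on $C$ of rank $3$ and degree $0$. I would next verify that $V$ is semi-stable. For generic $y$ the restriction of the defining sequence gives $0\to E'_{y}\to E_{y}\to\mathbb{C}_{(t_{0},y)}\to 0$, exhibiting $V\cong E'_{y}$ as a full subsheaf of the stable bundle $E_{y}$ of slope $\tfrac13$. Any subsheaf $W\subset V\subset E_{y}$ thus satisfies $\mu(W)<\tfrac13$; if $\mu(W)>0$ then $\deg W\geq 1$ while $\deg W<\mathrm{rk}(W)/3\leq\tfrac23$, a contradiction. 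Hence every subsheaf of $V$ has slope $\leq 0=\mu(V)$, so $V$ is semi-stable.

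The remaining task, to upgrade ``all $E'_{y}\cong V$'' to the decomposition $E'\cong f^{*}V\otimes\pi^{*}L$, is the main point. The mechanism I would use is the tautological homomorphism: set $\mathcal{H}:=\pi_{*}\mathcal{H}om(f^{*}V,E')$. Since $E'_{y}\cong V$ for all $y$, the number $h^{0}(C,\mathcal{H}om(V,E'_{y}))=\dim\mathrm{End}(V)$ is constant in $y$, so by Grauert $\mathcal{H}$ is locally free, its formation commutes with base change, $\mathcal{H}_{y}\cong\mathrm{Hom}(V,E'_{y})$, and adjunction yields an evaluation $\pi^{*}\mathcal{H}\otimes f^{*}V\to E'$. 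The goal is to find a line subbundle $L\hookrightarrow\mathcal{H}$ for which the induced $f^{*}V\otimes\pi^{*}L\to E'$ is an isomorphism on every fibre $C\times\{y\}$, hence an isomorphism; the degree of $L$ is then forced to be $0$, since $E'_{x}\cong V_{x}\otimes L\cong L^{\oplus 3}$ must be semi-stable of degree $0$ on $B$ (equivalently, by comparing $\det E'=f^{*}\det V\otimes\pi^{*}L^{\otimes 3}$ with $\pi^{*}M_{0}$).

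I expect the hard part to be exactly the existence of such an $L$, that is, producing a homomorphism $V\to E'_{y}$ that is an \emph{isomorphism} for all $y$ simultaneously. When $V$ is simple (e.g.\ stable) this is immediate: $\mathcal{H}$ is a line bundle, every nonzero homomorphism $V\to E'_{y}\cong V$ is invertible, and the evaluation is automatically a fibrewise isomorphism, recovering the usual seesaw splitting. For strictly semi-stable $V$, however, $\mathrm{Hom}(V,E'_{y})$ contains non-invertible elements, and the abstract conclusions of Theorem \ref{th:2.4} by themselves do not force a product (for instance $\pi^{*}W$, with $W$ a non-split semi-stable bundle of degree $0$ on $B$, satisfies all of them yet is not of the required form). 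The additional input that excludes this is the stability of $E_{y}$: since $E'_{y}$ is a single-point Hecke modification of the stable bundle $E_{y}$, I would argue that homomorphisms $V\to E'_{y}$ compatible with the inclusions $E'_{y}\subset E_{y}$ are necessarily isomorphisms, so that the isomorphism locus in $\mathbb{P}(\mathcal{H})$ carries the desired section. Turning this into a rigorous proof that the evaluation is fibrewise invertible—using the stability of $E_{y}$ rather than merely the s-equivalence of the $E'_{x}$—is the crux of the argument.
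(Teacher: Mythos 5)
Your setup is sound and in places more careful than the paper's own: you verify the determinant hypothesis of Theorem \ref{th:2.4}, correctly extract that $E'$ is locally free with all $E'_{y}\cong V$ for a fixed semi-stable $V$, and your observation that the conclusions of Theorem \ref{th:2.4} alone cannot force $E'\cong f^{*}V\otimes \pi^{*}L$ (witness $\pi^{*}W$ for a non-split semi-stable $W$ of degree $0$ on $B$) is exactly right and pinpoints where the real content of the lemma lies. But the proof stops precisely there: your last paragraph concedes that making the evaluation $\pi^{*}\mathcal{H}\otimes f^{*}V\rightarrow E'$ fibrewise invertible ``is the crux of the argument,'' and the one mechanism you sketch for it does not work. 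Stability of $E_{y}$ does not make homomorphisms $V\rightarrow E'_{y}$ invertible: every such homomorphism is automatically ``compatible with'' the inclusion $E'_{y}\subset E_{y}$ (one can always compose), so your criterion would assert that every nonzero element of $\text{Hom}(V,E'_{y})\cong \text{End}(V)$ is an isomorphism, which fails whenever $V$ is non-simple --- and $V\cong E'_{y}$ can perfectly well be strictly semi-stable here, since subsheaves of $E'_{y}\subset E_{y}$ are only constrained to have degree $\leq 0$, with equality allowed. So in exactly the hard case you isolate, the isomorphism locus in $\mathbb{P}(\mathcal{H})$ may be a proper open subset of each fibre, and nothing in the proposal produces a section of it.

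For comparison, the paper's route through this difficulty is quite different and is the bulk of its proof: after twisting by the $L\in \text{Pic}^{0}(B)$ maximizing $h^{0}(E'_{t}\otimes L^{-1})$, it notes $1\leq h^{0}(E'_{t})\leq 3$; if $h^{0}(E'_{t})=3$ then $E'_{t}\cong \mathcal{O}_{B}^{\oplus 3}$, $E'=f^{*}(f_{*}E')$, and one is done; the cases $h^{0}(E'_{t})=1,2$ are excluded by contradiction (the case $2$ by reference to Lemma 4.4 of \cite{Sun}). That exclusion uses inputs absent from your proposal: the elementary-transform sequence $0\rightarrow E'\rightarrow E\rightarrow _{X_{t_{0}}}\mathcal{O}(\mu)\rightarrow 0$ with $\mu<0$; the bound $\text{deg}V_{1}+\text{deg}V_{2}\leq -1$ extracted from stability of $E_{y_{0}}$ via the torsion of an auxiliary quotient $\mathcal{G}$ that a $c_{2}$ computation shows cannot be locally free; splitting of extensions twisted by nontrivial degree-$0$ line bundles on $B$ (vanishing of $R^{i}f_{*}$); and Leray spectral sequence arguments showing the relevant extension classes are pulled back from $C$, which forces $h^{0}(E'_{t})\geq 2$ and contradicts the maximality in the choice of $L$. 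Your tautological-section framework could conceivably serve as a vehicle for such an argument, but as written the lemma's essential difficulty is identified, not resolved.
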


\begin{proof}
By the definition, $\{E'_{t}:=E'|_{\{t\}\times B}\}_{t\in C}$ and $\{E'_{y}:=E'|_{C\times\{y\}}\}_{y\in B}$ are families of semi-stable bundles of degree 0. Apply Theorem 2.4 to $f: X\rightarrow C$ (resp. $\pi: X\rightarrow B$), then $\Delta(E')=0$ implies that $\{E'_{y}\}_{y\in B}$ (resp. $\{E'_{t}\}_{t\in C}$) are isomorphic to each other. Let $L$ be the line bundle of degree 0 on $B$ such that $H^{0}(E'_{t}\otimes L^{-1})(\forall t\in C)$ have maximal dimension. By tensoring $E$ (thus $E'$) with $\pi^{*}L^{-1}$, we can assume that $H^{0}(E'_{t})\neq0 (\forall t\in C)$, which have dimension at most 3 since $E'_{t}$ is semi-stable of degree 0. If $H^{0}(E'_{t})$ has dimension 3, then $E'=f^{*}(f_{*}E')$ and we are done. If $H^{0}(E'_{t})$ has dimension 1 or 2, we will show contradictions.

 By the definition of $E'$, we have an exact sequence
\begin{equation}
0\rightarrow E'\rightarrow E\rightarrow _{X_{t_{0}}}\mathcal{O}(\mu)\rightarrow 0,
\label{eq:4.28}
\end{equation}
where $\mathcal{O}(\mu)$ is a line bundle of degree $\mu<0$ on $B$. Then
$$V_{1}:=f_{*}E=f_{*}E'$$
is a vector bundle on $C$.

If $H^{0}(E'_{t})$ has dimension 1, then $V_{1}:=f_{*}E'=f_{*}E$ is a line bundle and we have exact sequence
\begin{equation}
0\rightarrow f^{*}V_{1}\rightarrow E'\rightarrow \mathcal{F}'\rightarrow 0
\label{eq:4.37}
\end{equation}
for a rank 2 vector bundle $\mathcal{F}'$ on $C\times B$ and $\Delta(\mathcal{F}')=0$. All the bundles $\{\mathcal{F}'_{t}=\mathcal{F}'|_{\{t\}\times B}\}_{t\in C}$ are semi-stable of degree 0 and are isomorphic to each other since all the bundles $\{E'_{t}\}_{t\in C}$ are semi-stable of degree 0 and are isomorphic to each other. Let $L'$ be a line bundle of degree 0 on $B$ such that $H^{0}(\mathcal{F}'_{t}\otimes L')\neq 0 (\forall t\in C)$, which must have dimension 1. To see it, tensor (\ref{eq:4.37}) with $\pi^{*}L'$, we have an exact sequence
\begin{equation}
0\rightarrow f^{*}V_{1}\otimes \pi^{*}L'\rightarrow E'\otimes \pi^{*}L'\rightarrow \mathcal{F}'\otimes \pi^{*}L'\rightarrow 0.
\label{eq:4.38}
\end{equation}
Restrict (\ref{eq:4.38}) to $X_{t}=\{t\}\times B\cong B$, we have
$$0\rightarrow L'\rightarrow E'_{t}\otimes L'\rightarrow\mathcal{F}'_{t}\otimes L'\rightarrow 0,$$
and then we have
$$0\rightarrow H^{0}(L')\rightarrow H^{0}(E'_{t}\otimes L')\rightarrow H^{0}(\mathcal{F}'_{t}\otimes L')\rightarrow H^{1}(L')\rightarrow \cdots,$$
which implies $h^{0}(\mathcal{F}'_{t}\otimes L')\leq h^{0}(E'_{t}\otimes L')\leq 1$ by the choice of $L$. Then $V_{2}=f_{*}(\mathcal{F}'\otimes \pi^{*}L')$ is a line bundle on $C$, and we have exact sequence
\begin{equation}
0\rightarrow f^{*}V_{2}\rightarrow \mathcal{F}'\otimes \pi^{*}L'\rightarrow f^{*}V_{3}\otimes \pi^{*}L"\rightarrow 0
\label{eq:4.39}
\end{equation}
for a line bundle $V_{3}$ on $C$ and a degree 0 line bundle $L"$ on $B$.

 Now we note that $\text{deg}V_{1}+\text{deg}V_{2}\leq -1$. To see it, we consider the exact sequence
 \begin{equation}
 0\rightarrow f^{*}V_{1}\rightarrow E\rightarrow \mathcal{F}\rightarrow 0
 \label{eq:4.35'}
 \end{equation}
 where $\mathcal{F}|_{f^{-1}(C\setminus \{t_{0}\})}$ is locally free and $\mathcal{F}$ satisfies
 \begin{equation}
 0\rightarrow \mathcal{F}'\rightarrow \mathcal{F}\rightarrow _{X_{t_{0}}}\mathcal{O}(\mu)\rightarrow 0.
 \label{eq:4.40}
 \end{equation}
 Thus $f_{*}(\mathcal{F}\otimes \pi^{*}L')=f_{*}(\mathcal{F}'\otimes \pi^{*}L')=V_{2}$ since $\mu<0$. Then we have an exact sequence
 \begin{equation}
 0\rightarrow f^{*}V_{2}\rightarrow \mathcal{F}\otimes \pi^{*}L'\rightarrow \mathcal{G}\rightarrow 0
 \label{eq:4.41}
 \end{equation}
 where $\mathcal{G}|_{f^{-1}(C\setminus \{t_{0}\})}$ is locally free of rank 1 by (\ref{eq:4.39}). But $\mathcal{G}$ is not locally free (otherwise $c_{2}(E)=c_{2}(E\otimes\pi^{*}L')
 =c_{1}(f^{*}V_{1}\otimes\pi^{*}L')(c_{1}(E\otimes\pi^{*}L')-c_{1}(f^{*}V_{1}\otimes\pi^{*}L'))
 +c_{2}(\mathcal{F}\otimes\pi^{*}L')
 =c_{1}(f^{*}V_{2})(c_{1}(E\otimes\pi^{*}L')-c_{1}(f^{*}V_{1}\otimes\pi^{*}L')-c_{1}(f^{*}V_{2}))=0$), and for any $y\in B$, the restrictions of (\ref{eq:4.35'}) and (\ref{eq:4.41}) to $X_{y}=\pi^{-1}(y)$
 $$0\rightarrow V_{1}\rightarrow E_{y}\rightarrow \mathcal{F}_{y}\rightarrow 0
 \text{ and } 0\rightarrow V_{2}\rightarrow \mathcal{F}_{y}\rightarrow \mathcal{G}_{y}\rightarrow 0$$
 are still exact, which means $\mathcal{F}$ is $B$-flat and then $\mathcal{G}$ is $B$-flat(cf. Lemma 2.1.4 of \cite{HuybrechtsLehn}). Thus, by Lemma 1.27 of \cite{Simpson}, there is a $y_{0}\in B$ such that $\mathcal{G}_{y_{0}}$ has torsion $\tau\neq0$ since $\mathcal{G}$ is not locally free. Then, since $E_{y_{0}}$ is stable of degree 1,
 $$\text{deg}\mathcal{G}_{y_{0}}\geq 1+\text{deg}\frac{\mathcal{G}_{y_{0}}}{\tau}>1+\mu(E_{y_{0}})=\frac{4}{3}$$
 which implies $\text{deg}V_{1}+\text{deg}V_{2}=\text{deg}E_{y_{0}}-\text{deg}\mathcal{G}_{y_{0}}\leq-1$.

By the sequences (\ref{eq:4.38}) and (\ref{eq:4.39}), $f^{*}V_{3}\otimes \pi^{*}L"$ is a quotient line bundle of $E'\otimes \pi^{*}L'$. Let $F:= \text{ker}(E'\otimes \pi^{*}L'\rightarrow f^{*}V_{3}\otimes \pi^{*}L"\rightarrow 0)$, then there is an induced morphism $\lambda: F\rightarrow f^{*}V_{2}$ satisfying the diagram
\[\begin{CD}
0@>>>F@>>>E'\otimes \pi^{*}L'@>>>f^{*}V_{3}\otimes \pi^{*}L"@>>>0\\
@.@V\lambda VV@VVV@|\\
0@>>>f^{*}V_{2}@>>>\mathcal{F}'\otimes \pi^{*}L'@>>>f^{*}V_{3}\otimes \pi^{*}L"@>>>0.
\end{CD} \]
By the snake lemma, $\lambda$ is surjective and $\text{ker}\lambda= f^{*}V_{1}\otimes \pi^{*}L'$. Then $F$ fits an exact sequence
\begin{equation}
0\rightarrow f^{*}V_{1}\otimes \pi^{*}L'\rightarrow F\rightarrow f^{*}V_{2}\rightarrow 0,
\label{eq:4.42}
\end{equation}
which is determined by a class in $H^{1}(X,f^{*}(V_{2}^{-1}\otimes V_{1})\otimes \pi^{*}L')$.

If $L'\neq \mathcal{O}_{B}$, then $R^{i}f_{*}(f^{*}(V_{2}^{-1}\otimes V_{1})\otimes \pi^{*}L')=V_{2}^{-1}\otimes V_{1}\otimes H^{i}(L')=0 (i=0, 1)$, which implies $H^{1}(X,f^{*}(V_{2}^{-1}\otimes V_{1})\otimes \pi^{*}L')=0$ and (\ref{eq:4.42})is split. Thus there is a section $f^{*}V_{2}\rightarrow F$ of $\lambda$, and we can consider $f^{*}V_{2}$ as a sub line bundle of $E'\otimes \pi^{*}L'$ by the morphism $f^{*}V_{2}\rightarrow F\rightarrow E'\otimes \pi^{*}L'$. Then $\text{deg}V_{2}\leq0$ since $E'_{y}$ is semi-stable of degree 0 for any $y\in B$. If $L"\neq\mathcal{O}_{B}$, then (\ref{eq:4.39}) is also split, then we have an exact sequence of inverse direction
$$0\rightarrow f^{*}V_{3}\otimes \pi^{*}L"\rightarrow \mathcal{F}'\otimes \pi^{*}L'\rightarrow f^{*}V_{2}\rightarrow 0.$$
Hence $\text{deg}V_{2}=0$. Now we let $F'=\text{ker}(E'\otimes \pi^{*}L'\rightarrow f^{*}V_{2}\rightarrow 0)$, then there is an induced morphism $F'\rightarrow f^{*}V_{3}\otimes \pi^{*}L"$ and $F'$ satisfies an exact sequence
\begin{equation}
0\rightarrow f^{*}V_{1}\otimes \pi^{*}L'\rightarrow F'\rightarrow f^{*}V_{3}\otimes \pi^{*}L"\rightarrow 0,
\label{eq:4.43}
\end{equation}
which is determined by a class in $H^{1}(X,f^{*}(V_{3}^{-1}\otimes V_{1})\otimes \pi^{*}(L"^{-1}\otimes L'))$. If $L"\neq L'$, we can prove that (\ref{eq:4.43}) is split. Then $f^{*}V_{3}\otimes \pi^{*}L"$ is a sub line bundle of $F'$ and then $f^{*}V_{3}\otimes \pi^{*}L"$ is a sub line bundle of $E'\otimes \pi^{*}L'$. Thus $\text{deg}V_{3}\leq 0$ since $E'_{y}$ is semi-stable of degree 0 for any $y\in B$, which contradicts that $\text{deg}V_{1}+\text{deg}V_{2}\leq-1$. Hence $L"=L'$. Tensoring (\ref{eq:4.43}) with $\pi^{*}L'^{-1}$, we have an exact sequence
\begin{equation}
0\rightarrow f^{*}V_{1}\rightarrow F'\otimes \pi^{*}L'^{-1}\rightarrow f^{*}V_{3}\rightarrow 0,
\label{eq:4.44}
\end{equation}
which is determined by a class in $H^{1}(X,f^{*}(V_{3}^{-1}\otimes V_{1}))$. However, note $R^{i}f_{*}(f^{*}(V_{3}^{-1}\otimes V_{1}))=V_{3}^{-1}\otimes V_{1} (i=0, 1)$ and $H^{0}(C, V_{3}^{-1}\otimes V_{1})=0$ since $\text{deg}V_{2}=0$, by Leray spectral sequence, we have
$$H^{1}(C,V_{3}^{-1}\otimes V_{1})\cong H^{1}(X, f^{*}(V_{3}^{-1}\otimes V_{1})).$$
Hence there exists an extension $0\rightarrow V_{1}\rightarrow V'\rightarrow V_{3}\rightarrow 0$ on $C$ such that $F'\otimes \pi^{*}L'^{-1}=f^{*}V'$. Thus $h^{0}(E'_{t})\geq h^{0}((F'\otimes \pi^{*}L'^{-1})_{t})=2$, which contradicts the assumption $h^{0}(E'_{t})=1$.
Thus $L"$ has to be $\mathcal{O}_{B}$ and (\ref{eq:4.39}) has to be
\begin{equation}
0\rightarrow f^{*}V_{2}\rightarrow \mathcal{F}'\otimes \pi^{*}L'\rightarrow f^{*}V_{3}\rightarrow 0
\label{eq:4.45}
\end{equation}
which is determined by a class in $H^{1}(X, f^{*}(V_{3}^{-1}\otimes V_{2}))$. However, note that $R^{i}f_{*}(f^{*}(V_{3}^{-1}\otimes V_{2}))=V_{3}^{-1}\otimes V_{2} (i=0, 1)$ and $H^{0}(C,V_{3}^{-1}\otimes V_{2})$=0 since $\text{deg}V_{2}\leq 0$, by Leray spectral sequence, we have
$$H^{1}(C,V_{3}^{-1}\otimes V_{2})\cong H^{1}(X, f^{*}(V_{3}^{-1}\otimes V_{2})).$$
Hence there exists an extension $0\rightarrow V_{2}\rightarrow W'\rightarrow V_{3}\rightarrow 0$ on $C$ such that $\mathcal{F}'\otimes \pi^{*}L'=f^{*}W'$. Thus $h^{0}(E'_{t}\otimes L')\geq h^{0}((\mathcal{F}'\otimes \pi^{*}L')_{t})=2$, which contradict the choice of $L$ and that $h^{0}(E'_{t})=1$.

We have shown that $L'$ has to be $\mathcal{O}_{B}$ and (\ref{eq:4.42}) has to be
\begin{equation}
0\rightarrow f^{*}V_{1}\rightarrow F\rightarrow f^{*}V_{2}\rightarrow 0,
\label{eq:4.46}
\end{equation}
which is determined by a class in $H^{1}(X,f^{*}(V_{2}^{-1}\otimes V_{1}))$. If $\text{deg}V_{2}>\text{deg}V_{1}$, we can see that
$$H^{1}(C, V_{2}^{-1}\otimes V_{1})\cong H^{1}(X,f^{*}(V_{2}^{-1}\otimes V_{1})).$$
Hence there exists an extension $0\rightarrow V_{1}\rightarrow W\rightarrow V_{2}\rightarrow 0$ on $C$ such that $F=f^{*}W$. Thus $h^{0}(E'_{t})\geq h^{0}(F_{t})=2$, which contradicts that $h^{0}(E'_{t})=1$. So $\text{deg}V_{2}\leq\text{deg}V_{1}$. Since $\text{deg}V_{1}+\text{deg}V_{2}\leq-1$ and $\text{deg}V_{1}\leq 0$, then $\text{deg}V_{2}\leq-1$ and $L"=\mathcal{O}_{B}$ (otherwise, the sequence (\ref{eq:4.39}) is split and then $\text{deg}V_{2}\geq 0$). Now (\ref{eq:4.39}) has to be (\ref{eq:4.45}), which is determined by a class in $H^{1}(X, f^{*}(V_{3}^{-1}\otimes V_{2}))\cong H^{1}(C,V_{3}^{-1}\otimes V_{2})$. which implies that $h^{0}(E'_{t}\otimes L')\geq h^{0}((\mathcal{F}'\otimes \pi^{*}L')_{t})=2$, which contradict the choice of $L$ and that $h^{0}(E'_{t})=1$.

If $H^{0}(E'_{t})$ has dimension 2, we can also show a contradiction similar as in Lemma 4.4 in \cite{Sun}.
\end{proof}

\begin{Proposition}
When $E$ is semi-stable of degree 0 on the generic fiber of $f: X\rightarrow C$, we have $\Delta(E)\geq 6$. If $C$ is not hyper-elliptic and $\phi: B\rightarrow M$ passes through generic points, assume that $E$ defines an essential elliptic curves, then $\Delta(E)\geq18$ when $g\geq 4$.
\label{prop:4.6}
\end{Proposition}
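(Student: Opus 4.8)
The plan is to split according to whether $E$ is semistable on every fibre of $f\colon X\to C$. If it is, then the discussion around (\ref{eq:4.27}) already produces the nonconstant morphism $\varphi_{E}\colon C\to\mathbb{P}^{2}$ with $\varphi_{E}^{*}\mathcal{O}_{\mathbb{P}^{2}}(1)=(\det f_{!}E)^{-1}$ of degree $c_{2}(E)$, and $\Delta(E)=6c_{2}(E)\geq 12$. For the sharper bound I would argue that non-hyperellipticity forces $c_{2}(E)\geq 3$: if $c_{2}(E)=1$ the image $\varphi_{E}(C)$ is a line mapped isomorphically, so $C\cong\mathbb{P}^{1}$, impossible; if $c_{2}(E)=2$ then, since $\varphi_{E}(C)$ is irreducible, either it is a smooth conic (again $C\cong\mathbb{P}^{1}$) or it is a line with $\varphi_{E}$ of degree $2$, exhibiting a $g^{1}_{2}$ and contradicting the non-hyperelliptic hypothesis. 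Hence $c_{2}(E)\geq 3$ and $\Delta(E)=6c_{2}(E)\geq 18$.

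When $E$ fails to be semistable on some fibre $X_{t_{0}}$, I would first reduce to the case where the minimal-slope indecomposable quotient $G$ of $E_{t_{0}}$ has rank $1$. Indeed, if $\text{rk}\,G=2$, the observations (1)--(3) preceding Lemma \ref{lm:4.1} let me replace $E$ by $E^{*}$: one has $\Delta(E)=\Delta(E^{*})$, the isomorphism $SU_{C}(3,\mathcal{L})\cong SU_{C}(3,\mathcal{L}^{-1})$, and $(k,l)$-stability turns into $(l,k)$-stability, so the generic-point hypothesis is preserved, while the corresponding minimal quotient of $E^{*}_{t_{0}}$ becomes a line bundle. With $G=\mathcal{O}(\mu)$ a line bundle of degree $\mu\leq -1$, set $E'=\ker(E\to_{X_{t_{0}}}\mathcal{O}(\mu))$. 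Since $\mu(E_{t_{0}})=0$, Lemma \ref{lm2.6} gives
$$\Delta(E)=\Delta(E')-6\mu\geq \Delta(E')+6,$$
and as $E'$ is still semistable of degree $0$ on the generic fibre, Theorem \ref{th:2.4} yields $\Delta(E')\geq 0$; this already proves $\Delta(E)\geq 6$.

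For the bound $\Delta(E)\geq 18$ under the generic-point hypothesis I would analyse $\Delta(E')$. If $\Delta(E')=0$, Lemma \ref{lm:4.1} gives $E'=f^{*}V\otimes\pi^{*}L$ with $V$ semistable of degree $0$ on $C$; after twisting by $\pi^{*}L^{-1}$ every $E_{y}$ becomes a length-one upward modification of the fixed bundle $V$ at the fixed point $t_{0}$, so $\phi$ factors as $B\xrightarrow{\ \delta\ }\mathbb{P}(V_{t_{0}}^{*})\to M$, where the second map is the Hecke morphism $\Psi_{(t_{0},V)}$ of (\ref{eq:3.3}) and $\delta^{*}\mathcal{O}_{\mathbb{P}(V_{t_{0}}^{*})}(1)=\mathcal{O}_{B}(-\mu)$. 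This is precisely the elliptic-curve-of-Hecke-type situation: since $\Psi_{(t_{0},V)}$ is a closed immersion sending lines to curves of $-K_{M}$-degree $2r=6$, one gets $\Delta(E)=6\deg\delta^{*}\mathcal{O}(1)=-6\mu$, and because $\phi$ is essential its image $\delta(B)\subset\mathbb{P}^{2}$ is a non-rational plane curve, hence of degree at least $3$; therefore $-\mu\geq 3$ and $\Delta(E)\geq 6r=18$. If instead $\Delta(E')>0$, then either $\mu\leq -3$, so that $\Delta(E)\geq -6\mu\geq 18$ already, or $\mu\in\{-1,-2\}$, in which case I would iterate the elementary transformation on $E'$ and use the fact that an elementary modification turns a $(k,l)$-stable fibre into a $(k,l-1)$-stable one to propagate the generic-point hypothesis and force $\Delta(E')\geq 18+6\mu$; either way $\Delta(E)=\Delta(E')-6\mu\geq 18$. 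Existence of the needed $(k,l)$-stable points is where $g\geq 4$ enters.

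The main obstacle is the subcase $\Delta(E')>0$ with $\mu\in\{-1,-2\}$: one must control the iterated chain of elementary transformations and show the total drop is at least $18$, all the while tracking how the $(k,l)$-stability of the generic fibre $E_{y}$ descends along the chain and survives the duality reduction. The other delicate point is making rigorous the identification of $\phi$ with the plane curve $\delta(B)$ and the inequality $\deg\delta^{*}\mathcal{O}(1)=-\mu\geq 3$, since it is exactly here that the dichotomy between a rational (Hecke-curve, degree $6$) image and a genuinely elliptic (Hecke-type, degree $18$) image is forced by the word \emph{essential}.
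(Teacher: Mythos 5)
Your strategy coincides with the paper's own proof in every part you actually carry out: the fiberwise-semistable case via $\varphi_{E}\colon C\to\mathbb{P}^{2}$ of degree $c_{2}(E)$, with non-hyperellipticity and $g\geq 2$ excluding $c_{2}(E)\leq 2$; the reduction of the rank-two quotient to the rank-one quotient by dualizing ($\Delta(E)=\Delta(E^{*})$, $M^{*}\cong M$, $(k,l)\mapsto(l,k)$), which the paper performs via (\ref{eq:4.29}); the bound $\Delta(E)=\Delta(E')-6\mu\geq 6$ from Lemma \ref{lm2.6} and Theorem \ref{th:2.4}; and, when $\Delta(E')=0$, the use of Lemma \ref{lm:4.1} to factor $\phi$ through a curve in $\mathbb{P}(V_{t_{0}}^{*})$ whose non-rationality (essentialness) forces $-\mu\geq 3$. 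One small point there: you should say explicitly that $V$ is $(1,0)$-stable, which is where the generic-point hypothesis enters via Lemma 3.3; without it the factorization through a Hecke-type morphism into $M$ is not available, and your degree count rests on it.

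The genuine gap is precisely the subcase you yourself flag as ``the main obstacle'': $\Delta(E')>0$ with $\mu=-1$, where one must prove $\Delta(E')\geq 12$ (the case $\mu=-2$ is trivial, since $c_{1}(E')^{2}=0$ gives $\Delta(E')=6c_{2}(E')\in 6\mathbb{Z}_{>0}$, a remark you omit). Your proposed mechanism --- iterate elementary transformations and propagate $(k,l)$-stability --- does not by itself produce $12$, for two reasons. First, the iteration may be impossible: $E'$ can be semistable on every fiber of $f$, and that branch is settled not by a further transformation but by the Friedman--Morgan--Witten morphism $\varphi_{E'}\colon C\to\mathbb{P}^{2}$ with $\deg\varphi_{E'}=c_{2}(E')$, which cannot equal $1$ because $C$ is not rational, whence $\Delta(E')\geq 12$. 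Second, when some $E'_{t'_{0}}$ is unstable one performs exactly one more transformation $E''$ (or its dual, as in (\ref{eq:4.30}) and (\ref{eq:4.31})): if $\Delta(E'')>0$ then $\Delta(E')=\Delta(E'')-6\mu'\geq 6+6=12$; but if $\Delta(E'')=0$ the paper concludes $E''=f^{*}V'$ (stability of $E''_{y}$, inherited from stability of $E'_{y}$, is what makes Theorem \ref{th:2.4} give the product structure) and obtains $-\mu'\geq 2$ from the elementary fact that a nonconstant morphism from the elliptic curve $B$ to $\mathbb{P}(V'^{*}_{t'_{0}})$ has degree at least $2$ --- not from essentialness, which is unavailable at this second stage because $\phi$ no longer factors through this projective plane. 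So the quantitative input at the second stage ($\geq 2$, from ellipticity alone) differs from the first stage ($\geq 3$, from essentialness); your sketch supplies neither the fiberwise-semistable branch nor the degree-$\geq 2$ argument, and the same omission recurs verbatim on the dual side when $\Delta(E^{*'})>0$. Without these, the iteration yields only $\Delta(E')\geq 6$ and hence $\Delta(E)\geq 12$, short of the claim.
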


\begin{proof}
If $E$ is semi-stable on each fiber of $f: X\rightarrow C$, then $E$ induces a non-trivial morphism $\varphi_{E}: B\rightarrow \mathbb{P}^{2}$. By (\ref{eq:4.27}), $\Delta(E)\geq12$.

If there is a $t_{0}\in C$ such that $E_{t_{0}}$ is not semi-stable, then we have either (\ref{eq:4.28})
or
\begin{equation}
 0\rightarrow E^{*'}\rightarrow E^{*}\rightarrow _{X_{t_{0}}}\mathcal{O}(\mu^{*})\rightarrow0,
 \label{eq:4.29}
 \end{equation}
 where $\mathcal{O}(\mu^{*})$ is a line bundle of degree $\mu^{*}\leq-1$ on $B$.

 If we have (\ref{eq:4.28}). If $\Delta(E')\neq 0$, then $\Delta(E')>0$ by Theorem \ref{th:2.4}. On the other hand, $c_{1}(E')^{2}=0$ since $E'$ has degree 0 on the generic fiber of $f: X\rightarrow C$ and $\text{Pic}(C\times B)=\text{Pic}(C)\times \text{Pic}(B)$. Thus $\Delta(E')=6c_{2}(E')\geq6$, and by Lemma \ref{lm2.6}, we have $\Delta(E)=\Delta(E')-6\mu\geq 12$. If $\Delta(E')=0$, by Lemma \ref{lm:4.1}, we can assume that $E'=f^{*}V$, then the sequence (\ref{eq:4.28}) induces a nontrivial morphism $\varphi: B\rightarrow \mathbb{P}(V_{t_{0}}^{*})$ such that $\mathcal{O}(-\mu)=\varphi^{*}\mathcal{O}_{\mathbb{P}(V_{t_{0}}^{*})}(1)$. Thus $\Delta(E)=-6\mu\geq 12$.

 If we have (\ref{eq:4.29}), by Lemma \ref{lm2.6}, we have
  $\Delta(E)=\Delta(E^{*})=\Delta(E^{*'})-6\mu^{*}\geq 6$.

Now we assume that $C$ is not hyper-elliptic and $\phi: B\rightarrow M$ passes through generic points, i.e., $E_{y}|_{C\times\{y\}}$ is not only (1,1)-stable but also (1,2)-stable for generic $y\in B$. If $E$ is semi-stable on each fiber $X_{t}$, then $\Delta(E)\geq6\text{deg}\varphi_{E}\geq18$ by (\ref{eq:4.27}) since $C$ is not hyper-elliptic.

If there is a $t_{0}\in C$ such that $E_{t_{0}}$ is not semi-stable, then we have either (\ref{eq:4.28}) or (\ref{eq:4.29}).

We consider the case that (\ref{eq:4.28}) holds at first, for this case we have
$\Delta(E)=\Delta(E')-6\mu$. If $\Delta(E')=0$, then $E'=f^{*}V$ where $V$ is a (1,0)-stable by Lemma 3.3, then the sequence (\ref{eq:4.28}) induces a non-trivial morphism $\varphi: B\rightarrow \mathbb{P}(V_{t_{0}}^{*})$ such that $\mathcal{O}(-\mu)=\varphi^{*}\mathcal{O}_{\mathbb{P}(V_{t_{0}}^{*})}(1)$ and $\phi: B\rightarrow M$ factors through $\varphi: B\rightarrow \varphi(B)\subset \mathbb{P}(V_{t_{0}}^{*})$, which implies that the normalization of $\varphi(B)$ is an elliptic curve. Hence $-\mu\geq 3$ and $\Delta(E)\geq 18$.

Now we consider the case $\Delta(E')>0$. We claim that $\Delta(E')\geq12$, which implies $\Delta(E)\geq18$. If $E'$ is semi-stable on each fiber $X_{t}$, then $E'$ defines a non-trivial morphism $\varphi_{E'}: C\rightarrow \mathbb{P}^{2}$ such that $\varphi_{E'}^{*}\mathcal{O}_{\mathbb{P}^{2}}(1)=(\text{det}f_{!}E')^{-1}=c_{2}(E')$. Thus $\Delta(E')\geq12$.
If there is a $t'_{0}\in C$ such that $E'_{t_{0}}$ is not semi-stable, then we have either
\begin{equation}
0\rightarrow E"\rightarrow E'\rightarrow _{X_{t'_{0}}}\mathcal{O}(\mu')\rightarrow 0
\label{eq:4.30}
\end{equation}
where $E"_{y}=E"|_{C\times \{y\}}$ is stable of degree -1 for generic $y\in B$ since $E'_{y}$ is stable of degree 0, or
\begin{equation}
0\rightarrow E^{'*'}\rightarrow E^{'*}\rightarrow _{X_{t'_{0}}}\mathcal{O}(\mu^{'*})\rightarrow 0
\label{eq:4.31}
\end{equation}
where $E^{'*'}_{y}=E^{'*'}|_{C\times \{y\}}$ is stable of degree -1 since $E^{'*}_{y}=(E'_{y})^{*}$ is stable of degree 0. Suppose that (\ref{eq:4.30}) holds, if $\Delta(E")\neq0$, it's clear that $\Delta(E")=6c_{2}(E")\geq 6$ and $\Delta(E')=\Delta(E")-6\mu'\geq12$. If $\Delta(E")=0$, by Theorem2.4, there is a stable bundle $V'$ on $C$ such that $E"_{y}=V'$ for all $y\in B$. Then we can choose $E"=f^{*}V'$, the sequence (\ref{eq:4.30}) induces a non-trivial morphism $\varphi': B\rightarrow \mathbb{P}(V^{'*}_{t'_{0}})$ such that $\mathcal{O}(-\mu')=\varphi^{'*}\mathcal{O}_{\mathbb{P}(V^{'*}_{t'_{0}})}(1)$. Thus $\Delta(E')=-6\mu'\geq12$. Now we suppose (\ref{eq:4.31}) holds. If $\Delta(E^{'*'})\neq0$, it's clear that $\Delta(E^{'*'})=6c_{2}(E^{'*'})\geq 6$ and $\Delta(E')=\Delta( E^{'*})=\Delta(E^{'*'})-6\mu^{'*}\geq12$. If $\Delta(E^{'*'})=0$, by Theorem \ref{th:2.4}, there is a stable bundle $W'$ on $C$ such that $E^{'*'}_{y}=W'$ for all $y\in B$. Then we can choose $E^{'*'}=f^{*}W'$, the sequence (\ref{eq:4.31}) induces a non-trivial morphism $\psi': B\rightarrow \mathbb{P}(W^{'*}_{t'_{0}})$ such that $\mathcal{O}(-\mu^{'*})=\psi^{'*}\mathcal{O}_{\mathbb{P}(W^{'*}_{t'_{0}})}(1)$. Thus $-\mu^{'*}\geq 2$ and $\Delta(E')=\Delta(E^{'*})=-6\mu^{'*}\geq12$.

For the case that (\ref{eq:4.29}), we have $\Delta(E)=\Delta(E^{*})=\Delta(E^{*'})-6\mu^{*}$. If $\Delta(E^{*'})=0$, then $E^{*'}=f^{-1}W$ where $W$ is (2,0)-stable of degree -2 by Remark3.1(ii) and Lemma 3.3, then the sequence (\ref{eq:4.29}) induces a non-trivial morphism $\psi: B\rightarrow \mathbb{P}(W^{*}_{t_{0}})$ such that $\mathcal{O}(-\mu^{*})=\psi^{*}\mathcal{O}_{\mathbb{P}(W^{*}_{t_{0}})}(1)$ and $\phi: B\rightarrow M$ factors through $\psi: B\rightarrow \psi(B)\subset\mathbb{P}(W^{*}_{t_{0}})$ by $\mathbb{P}(W^{*}_{t_{0}})\rightarrow M^{*}\cong M$. Which implies that the normalization of $\psi(B)$ is an elliptic curve. Hence $-\mu^{*}\geq 3$ and $\Delta(E)=\Delta(E^{*})\geq 18$.
For the case $\Delta(E^{*'})\neq0$, we can prove that $\Delta(E^{*'})\geq12$ similarly as to prove that $\Delta(E')\geq12$, and hence $\Delta(E)=\Delta(E^{*})\geq 18$.
\end{proof}

From the Example 3.6 of \cite{Sun} and Proposition \ref{prop:3.1}, we can see the existence of essential elliptic curves of degree $6(r,d)$(which is 6 in our case). By Propositions \ref{prop:4.1}, \ref{prop:4.2}, \ref{prop:4.3}, \ref{prop:4.4}, \ref{prop:4.5}, \ref{prop:4.6}, we have

\begin{Theorem}\label{th:4.7}
Let $M=SU_{C}(3, \mathcal{L})$ be the moduli space of rank 3 stable bundles on $C$ with fixed determinant of degree 1. Then, when $C$ is generic, any essential elliptic curve $\phi: B\rightarrow M$ has degree
$$\text{deg}\phi^{*}(-K_{M})\geq 6$$
and $\text{deg}\phi^{*}(-K_{M})=6$ if and only if $\phi$ satisfies one of the following conditions:

(1)it factors through
\[\begin{CD}
\phi: B@>\psi>>q^{-1}(\xi)=\mathbb{P}(H^{1}(V_{2}^{*}\otimes V_{1}))@>\Phi_{\xi}>>M
\end{CD} \]
for some $\xi=(V_{1}, V_{2})\in J_{C}\times U_{C}(2,1)$ such that $\psi^{*}\mathcal{O}_{\mathbb{P}(H^{1}(V_{2}^{*}\otimes V_{1}))}(1)$ has degree 3.

(2)it factors through
\[\begin{CD}
\phi: B@>\psi>>\mathcal{P}@>\Phi_{\xi}>>M
\end{CD} \]
but it is not in any fiber of $q: \mathcal{P}\rightarrow \mathcal{R}_{\mathcal{L}}\hookrightarrow J_{C}\times U_{C}(2,1)$, and $q$ induces a morphism $q_{2}: B\rightarrow \mathbb{P}(H^{1}(L_{3}^{-1}\otimes L_{2}))$ for some $(L_{2},L_{3})\in J_{C}\times J_{C}^{1}$ such that $q_{2}^{*}\mathcal{O}_{\mathbb{P}(H^{1}(L_{3}^{-1}\otimes L_{2}))}(1)$ has degree 2 and $\psi^{*}\mathcal{O}_{\mathcal{P}}(1)$ has degree 1.

(3)it's defined by a vector bundle $E$ on $C\times B$, which is semi-stable of degree 0 at generic fiber of $f: X\rightarrow C$, there exists only one point $t_{0}\in C$ such that $E_{t_{0}}$ is not semi-stable and the minimal slop indecomposable quotient bundle $G$ of rank 2 and $\text{deg}G=-1$.
\end{Theorem}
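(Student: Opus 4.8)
The plan is to assemble the theorem from the six propositions of this section, which between them exhaust every possibility for the relative Harder--Narasimhan filtration $0=E_0\subset\cdots\subset E_n=E$ over $C$. For the lower bound $\deg\phi^*(-K_M)=\Delta(E)\geq 6$ I would observe that $n\in\{1,2,3\}$, that the case $n=2$ splits according to $\mathrm{rk}\,E_1=2$ or $\mathrm{rk}\,E_1=1$, and that the case $n=1$ splits --- after tensoring by some $\pi^*L$ to normalize $0\leq\deg(E|_{X_t})\leq 2$ --- into generic-fibre degrees $0,1,2$. Propositions \ref{prop:4.1}--\ref{prop:4.6} treat exactly these cases and each yields $\Delta(E)\geq 6$; in fact Propositions \ref{prop:4.1}, \ref{prop:4.4}, \ref{prop:4.5} give the strictly larger bounds $10$, $14$, $8$. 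Throughout one invokes the genericity of $C$ (no surjection onto an elliptic curve), which guarantees $\mathrm{Pic}(C\times B)=\mathrm{Pic}(C)\times\mathrm{Pic}(B)$ and hence the validity of the degree formula (\ref{eq:2.1}).

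For the classification of equality I would first note that, by the strict bounds just mentioned, $\Delta(E)=6$ can occur only in the three cases covered by Propositions \ref{prop:4.2} ($n=2$, $\mathrm{rk}\,E_1=2$), \ref{prop:4.3} ($n=2$, $\mathrm{rk}\,E_1=1$), and \ref{prop:4.6} (generic-fibre degree $0$). The strategy is then to revisit each proof and read off precisely the sub-case in which the displayed inequalities become equalities, checking these are the only ones reaching $6$. In Proposition \ref{prop:4.3} this is the branch $\Delta(F_2)=0$, $\mu_2=0$, $\deg E_1=0$, where $E$ sits in $0\to f^*V_1\otimes\pi^*\mathcal{O}(\mu_1)\to E\to f^*V_2\to 0$ with $V_1\in J_C$, $V_2\in U_C(2,1)$ and the induced $\varphi\colon B\to\mathbb{P}H^1(V_2^{-1}\otimes V_1)$ satisfies $\varphi^*\mathcal{O}(1)=\mathcal{O}_B(3)$; comparing the universal extension (\ref{eq:3.4}) with this sequence identifies $\phi$ with $\Phi_\xi\circ\varphi$ for $\xi=(V_1,V_2)$, which is condition (1). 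In Proposition \ref{prop:4.6} equality forces scenario (\ref{eq:4.29}) with $\Delta(E^{*'})=0$ and $\mu^*=\deg G=-1$; since $\Delta(E^{*'})=0$ after a single elementary transform is equivalent to $E$ being non-semistable on exactly one fibre $X_{t_0}$, with minimal-slope indecomposable quotient $G$ of rank $2$, this is verbatim condition (3).

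The delicate case is Proposition \ref{prop:4.2}, which I expect to match condition (2). Equality there forces the branch $\mu_1=\frac12$, $\Delta(E_1)=0$, $c_2(F_2)=0$, and tracing the final inequality pins down $\deg E_1=0$ and $\mu_1-\mu_2=\frac32$. By Theorem \ref{th:2.4} the rank-$2$ piece is then $E_1=\pi^*V\otimes f^*L$ for a stable $V$ of degree $1$ on $B$ and a degree-$0$ line bundle $L$ on $C$, with $F_2=f^*L_3$ a degree-$1$ line bundle on $C$. I would untwist to recover the double extension (\ref{eq:1})--(\ref{eq:2}): splitting off the unique degree-$0$ sub-line-bundle of $V$ exhibits a rank-$1$ subsheaf $f^*L_1\otimes\pi^*\mathcal{O}(1)$ of $E$ whose rank-$2$ quotient $E'$ fits (\ref{eq:2}), so that $y\mapsto E'|_{C\times\{y\}}$ defines the curve $q(B)$ in $\mathcal{R}_{\mathcal{L}}$. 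Recognizing this family, via the line classification in \cite{Sun05} and \cite{MokSun09}, as a degree-$2$ cover of a line $\theta\colon\mathbb{P}H^1(L_3^{-1}\otimes L_2)\to SU_C(2,\mathcal{L}')$, and checking $q_2^*\mathcal{O}(1)=\mathcal{O}_B(2)$ and $\psi^*\mathcal{O}_{\mathcal{P}}(1)=\mathcal{O}_B(1)$, yields condition (2); here stability of the fibres $E|_{C\times\{y\}}$ is what confines $E$ to the $n=2$ stratum rather than degenerating its Harder--Narasimhan type. The converse direction is comparatively routine: condition (1) gives $\Delta=6$ from $\Phi_\xi^*(-K_M)=\mathcal{O}(2)$ in (\ref{eq:3.5}) together with the degree-$3$ pullback, condition (2) is exactly Proposition \ref{prop:3.1}, and condition (3) follows from Lemma \ref{lm2.6}, since a single elementary transform with $\Delta(E^{*'})=0$ gives $\Delta(E)=-6\mu^*=6$. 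The main obstacle is precisely the geometric translation in the Proposition \ref{prop:4.2} equality case --- passing from the abstract product structure $E_1=\pi^*V\otimes f^*L$ to an honest factorization through $\Phi\colon\mathcal{P}\to M$ and matching every line-bundle degree --- together with verifying that no stray sub-case of any proposition sneaks down to the value $6$.
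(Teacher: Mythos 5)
Your proposal is correct and follows essentially the same route as the paper: the lower bound is assembled from Propositions \ref{prop:4.1}--\ref{prop:4.6} exactly as in the text, equality is localized to the same three sub-cases (the $\mu_{2}=0$, $\text{deg}E_{1}=0$ branch of Proposition \ref{prop:4.3} giving (1); the $\mu_{1}=\frac{1}{2}$, $\Delta(E_{1})=0$, $c_{2}(F_{2})=0$ branch of Proposition \ref{prop:4.2} giving (2) via the same twist-and-snake-lemma construction of the two extensions with pullback degrees $2$ and $1$; and the sequence (\ref{eq:4.29}) case of Proposition \ref{prop:4.6} with $\Delta(E^{*'})=0$, $\mu^{*}=-1$ giving (3)), and the converse verifications match Proposition \ref{prop:3.1} and Lemma \ref{lm2.6}. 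One cosmetic point: in the Proposition \ref{prop:4.2} case the sub-line bundle of $V$ you split off should be described as the image of the unique section $\mathcal{O}_{B}\hookrightarrow V$ (equivalently, the paper's $f^{*}f_{*}$ construction) rather than \emph{the} unique degree-$0$ sub-line bundle, since a stable rank-$2$ degree-$1$ bundle on an elliptic curve has a family of such sub-bundles.
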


\begin{proof}
By Propositions \ref{prop:4.1}, \ref{prop:4.2}, \ref{prop:4.3}, \ref{prop:4.4}, \ref{prop:4.5}, \ref{prop:4.6}, we have $\Delta(E)\geq6$.
Let $0=E_{0}\subset E_{1}\subset\cdots\subset E_{n}=E$ be the relative Harder-Narasimhan filtration of $E$ over $C$. The possible case $\Delta(E)=6$ only set up only in following three cases:

In Proposition \ref{prop:4.3} when $\text{deg}E_{1}=0$, $\Delta(F_{2})=0$ and $F_{2}$ is semistable of even degree $2\mu_{2}$ at the generic fiber of $f: X\rightarrow C$. The condition $\text{deg}E_{1}=0$ implies there is a line bundle $V_{1}$ of degree 0 on $C$ and a line bundle $\mathcal{O}(\mu_{1})$ of degree $\mu_{1}$ on $B$ such that $E_{1}=f^{*}V_{1}\otimes \pi^{*}\mathcal{O}(\mu_{1})$. Since $E$ is stable of degree 1 at every fiber of $\pi: X\rightarrow B$, $F_{2}$ is also stable of degree 1 at every fiber of $\pi: X\rightarrow B$. Applying Theorem \ref{th:2.4} to $\pi: X\rightarrow B$, there is a stable bundle $V_{2}$ of degree 1 on $C$ and a line bundle $\mathcal{O}(\mu_{2})$ of degree $\mu_{2}$ on $B$ such that $F_{2}=f^{*}V_{2}\otimes \pi^{*}\mathcal{O}(\mu_{2})$. These imply that $E\otimes \pi^{*}\mathcal{O}(-\mu_{2})$ fits an exact sequence
$$0\rightarrow f^{*}V_{1}\otimes\pi^{*}\mathcal{O}(\mu_{1}-\mu_{2})\rightarrow E\otimes \pi^{*}\mathcal{O}(-\mu_{2}) \rightarrow f^{*}V_{2}\rightarrow 0,$$
which defines a morphism $\phi: B\rightarrow \mathbb{P}(H^{1}(V_{2}^{*}\otimes V_{1}))$ such that $\psi^{*}\mathcal{O}_{\mathbb{P}(H^{1}(V_{2}^{*}\otimes V_{1}))}(1)$ is of degree $\mu_{1}-\mu_{2}$. Then $\Delta(E)=6$ and (\ref{eq:4.9}) imply $\mu_{1}-\mu_{2}=3$.

In Proposition \ref{prop:4.2} when $c_{2}(F_{2})=0, \Delta(E)=0$ and $E_{1}$ is semi-stable of odd degree $2\mu_{1}$ at generic fiber of $f: X\rightarrow C$. Tensoring $E$ with $\pi^{*}\mathcal{O}(m)$ for a degree $m$ line bundle $\mathcal{O}(m)$ on $B$ such that $\text{deg}(E_{1}\otimes\pi^{*}\mathcal{O}(m))=2\mu_{1}+2m=1$. Applying Theorem \ref{th:2.4} to
$f: X\rightarrow C$, $\Delta(E)=0$ implies all the bundles$\{E_{1t}\otimes \mathcal{O}(m)\}_{t\in C}$ are semistable of degree 1 and $s-$equivalent each other, and then stable and isomorphic to each other. Thus there is a stable bundle $V$ of degree 1 on $B$ and a line bundle $L_{1}$ on $C$ such that $E_{1}\otimes\pi^{*}\mathcal{O}(m)=\pi^{*}V\otimes f^{*}L_{1}$. Then we have $\text{deg}E_{1}=2\text{deg}L_{1}$ and $f_{*}(E_{1}\otimes\pi^{*}\mathcal{O}(m))\cong L_{1}$.
Thus $E_{1}\otimes\pi^{*}\mathcal{O}(m)$ satisfies an exact sequence
$$0\rightarrow f^{*}L_{1}\rightarrow E_{1}\otimes\pi^{*}\mathcal{O}(m)\rightarrow f^{*}L_{2}\otimes \pi^{*}\mathcal{O}(1)\rightarrow 0,$$
for a line bundle $L_{2}$ on $C$ and a line bundle $\mathcal{O}(1)$ of degree 1 on $B$.
On the other hand, $c_{2}(F_{2})=0$ implies there is a line bundle $L_{3}$ on $C$ and a line bundle $\mathcal{O}(\mu_{2})$ of degree $\mu_{2}$ on $B$ such that
$F_{2}=f^{*}L_{3}\otimes \pi^{*}\mathcal{O}(\mu_{2})$. Then $E\otimes \pi^{*}\mathcal{O}(m)$ fits an exact sequence
$$0\rightarrow E_{1}\otimes\pi^{*}\mathcal{O}(m)\rightarrow E\otimes\pi^{*}\mathcal{O}(m)\rightarrow f^{*}L_{3}\otimes\pi^{*}\mathcal{O}(\mu_{2}+m)\rightarrow 0.$$
 Consider $f^{*}L_{1}\otimes \pi^{*}\mathcal{O}(-\mu_{2}-m)$ as a subline bundle of $E\otimes \pi^{*}\mathcal{O}(-\mu_{2})$ and let $E':=\frac{E\otimes \pi^{*}\mathcal{O}(-\mu_{2})}{f^{*}L_{1}\otimes \pi^{*}\mathcal{O}(-\mu_{2}-m)}$, then there is an induced homomorphism $\eta: f^{*}L_{2}\otimes\pi^{*}\mathcal{O}(1-\mu_{2}-m)\rightarrow E'$. By the snake lemma, $\eta$ is injective and $E'$ fits exact sequences
\begin{equation}
0\rightarrow f^{*}L_{1}\otimes \pi^{*}\mathcal{O}(-\mu_{2}-m)\rightarrow E\otimes \pi^{*}\mathcal{O}(-\mu_{2})\rightarrow E'\rightarrow 0
\label{eq:4.32}
\end{equation}
and
\begin{equation}
0\rightarrow f^{*}L_{2}\otimes\pi^{*}\mathcal{O}(1-\mu_{2}-m)\rightarrow E'\rightarrow f^{*}L_{3}\rightarrow 0.
\label{eq:4.33}
\end{equation}
The sequence (\ref{eq:4.33}) induces a morphism $q_{2}: B\rightarrow \mathbb{P}(H^{1}(L_{3}^{-1}\otimes L_{2}))$ such that $q_{2}^{*}\mathcal{O}_{\mathbb{P}(H^{1}(L_{3}^{-1}\otimes L_{2}))}(1)$ is of degree $1-\mu_{2}-m=(\mu_{1}-\mu_{2})+\frac{1}{2}$. $\Delta(E)=6$ and (\ref{eq:4.3}) imply $\text{deg}E_{1}=0$ and $\mu_{1}-\mu_{2}=\frac{3}{2}$, which also imply
$\text{deg}L_{1}=\text{deg}L_{2}=0$ and $\text{deg}L_{3}=1$. Thus $q_{2}^{*}\mathcal{O}_{\mathbb{P}(H^{1}(L_{3}^{-1}\otimes L_{2}))}(1)$ is of degree $1-\mu_{2}-m=(\mu_{1}-\mu_{2})+\frac{1}{2}=2$. The sequence (\ref{eq:4.32}) induces a morphism $\psi: B\rightarrow \mathcal{P}$ such that $\psi^{*}\mathcal{O}_{\mathcal{P}}(1)$ is of degree $-\mu_{2}-m=1$.

In Proposition \ref{prop:4.6} when $E$ is semi-stable of degree 0 at generic fiber of $f: X\rightarrow C$, there exists only one point $t_{0}\in C$ such that $E_{t_{0}}$ is not semi-stable and the minimal slop quotient bundle $G$ of rank 2 and $\text{deg}G=-1$.
\end{proof}

\begin{Remark}
If $\phi: B\rightarrow M$ satisfy condition (1), it is a elliptic curve of split type with minimal degree. If $\phi: B\rightarrow M$ satisfy condition (2), it is a elliptic curve in Proposition \ref{prop:3.1}. which implies an elliptic curve of degree 6 may not be an elliptic curve of split type.
\end{Remark}

\begin{Theorem}\label{th:4.8}
When $g> 12$ and $C$ is generic, any essential elliptic curve $\phi: B\rightarrow M=SU_{C}(3, \mathcal{L})$ that passes through the generic points must have $\text{deg}\phi^{*}(-K_{M})\geq 18$.
\end{Theorem}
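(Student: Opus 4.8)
The plan is to reduce Theorem \ref{th:4.8} to the case analysis already carried out in Propositions \ref{prop:4.1}--\ref{prop:4.6}, since each of these records a lower bound for $\Delta(E)$ precisely under the hypothesis that $\phi$ passes through the generic points. Let $E$ be the rank $3$ bundle on $X=C\times B$ defining $\phi$, and consider its relative Harder--Narasimhan filtration $0=E_{0}\subset E_{1}\subset\cdots\subset E_{n}=E$ over $C$. Since $\text{rk}(E)=3$ we have $n\in\{1,2,3\}$. The genericity of $C$ guarantees $\text{Pic}(C\times B)=\text{Pic}(C)\times\text{Pic}(B)$ and that $C$ is not hyper-elliptic, so every hypothesis needed by the propositions is in force; moreover $g>12$ implies $g\geq 4$ and $g>4$.

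First I would dispose of $n=3$ and $n=2$. When $n=3$, Proposition \ref{prop:4.1} gives $\Delta(E)\geq 18$ for $\phi$ through a generic point. When $n=2$ there are two subcases according to $\text{rk}(E_{1})$: if $\text{rk}(E_{1})=2$, Proposition \ref{prop:4.2} gives $\Delta(E)\geq 20$; if $\text{rk}(E_{1})=1$, Proposition \ref{prop:4.3} gives $\Delta(E)\geq 18$. In each of these the required bound $\Delta(E)\geq 18$ already holds.

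The case $n=1$ means $E$ is semi-stable on the generic fiber of $f:X\to C$, and here I would first normalize. Tensoring $E$ by $\pi^{*}L$ for a suitable line bundle $L$ on $B$ changes $\deg(E|_{X_{t}})$ by a multiple of $3$ without altering $\Delta(E)$ (it is $c_{2}(\mathcal{E}nd^{0}(E))$, invariant under twists pulled back from $B$) and without changing the isomorphism class of any $E|_{C\times\{y\}}$, so the property that $\phi$ passes through the generic points is preserved. Hence we may assume $\deg(E|_{X_{t}})\in\{0,1,2\}$, which splits $n=1$ into exactly the three situations treated in Propositions \ref{prop:4.4} (degree $1$, bound $\geq 20$), \ref{prop:4.5} (degree $2$, bound $\geq 18$), and \ref{prop:4.6} (degree $0$, bound $\geq 18$, using that $C$ is not hyper-elliptic). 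Combining all six cases yields $\Delta(E)=\deg\phi^{*}(-K_{M})\geq 18$.

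The step I expect to be binding is the degree-$2$ subcase of $n=1$, namely Proposition \ref{prop:4.5}: it is the only ingredient that forces the genus hypothesis $g>12$, through the dimension estimate showing $\deg f_{*}E\leq -8$ for curves through the generic point. All remaining cases need only $g\geq 4$, so $g>12$ is exactly what is required to push the weakest of the six bounds up to $18$. The only real care needed is to verify that the normalizing twist preserves both $\Delta$ and the generic-point condition, and that genericity of $C$ supplies the non-hyper-elliptic hypothesis of Proposition \ref{prop:4.6}; both are routine.
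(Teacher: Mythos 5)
Your proposal is correct and is essentially the paper's own (implicit) proof: Theorem \ref{th:4.8} is stated without a separate argument precisely because it is the aggregation of Propositions \ref{prop:4.1}--\ref{prop:4.6} over the cases $n=3$, $n=2$ (with $\text{rk}\,E_{1}=2$ or $1$), and $n=1$ normalized by a $\pi^{*}L$-twist to fiber degree $0$, $1$, or $2$, exactly as you lay out, with the respective bounds $18$, $20$, $18$, $20$, $18$, $18$. You also correctly identify Proposition \ref{prop:4.5} (the degree-$2$ subcase, via $\text{deg}f_{*}E\leq -8$) as the unique place where $g>12$ is needed, matching the paper's hypotheses.
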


\end{document}